\newcommand{\mz}{\ensuremath{\mathbb Z}}
\newcommand{\mr}{\ensuremath{\mathbb R}}
\newcommand{\half}{\ensuremath{ \frac{1}{2}}}
\newcommand{\intR}{\int_{-\infty}^{\infty}}
\newcommand{\thalf}{\tfrac12}
\newcommand{\sumstar}{\sideset{}{^*}\sum}
\newcommand{\SL}[2]{SL_{#1}(\mathbb{#2})}
\newcommand{\GL}[2]{GL_{#1}(\mathbb{#2})}
\newcommand{\smax}{\overline{s}_-}
\newcommand{\smin}{\underline{s}_+}
\theoremstyle{plain}		
	\newtheorem{mytheo}{Theorem}[section]
	\newtheorem{myprop}[mytheo]{Proposition}
	\newtheorem{mycoro}[mytheo]{Corollary}
     \newtheorem{mylemma}[mytheo]{Lemma}
	\newtheorem{myconj}[mytheo]{Conjecture}
	\newtheorem{myremark}[mytheo]{Remark}
\theoremstyle{remark}
\numberwithin{equation}{section}
\numberwithin{figure}{section}
\begin{document}
\title{The $L^2$ restriction norm of a Maass form on $SL_{n+1}(\mathbb{Z})$.}
\author{Xiaoqing Li}
\address{Deptartment of Mathematics \\
State University of New York at Buffalo \\
Buffalo, NY, 14260
}
\email{XL29@buffalo.edu}
\author{Sheng-Chi Liu} 
\author{Matthew P. Young} 
\address{Department of Mathematics \\
	  Texas A\&M University \\
	  College Station \\
	  TX 77843-3368 \\
		U.S.A.}
\email{scliu@math.tamu.edu}
\email{myoung@math.tamu.edu}
\thanks{This material is based upon work supported by the National Science Foundation under agreement Nos. DMS-0901035 (X.L.), DMS-1101261 (M.Y.).  Any opinions, findings and conclusions or recommendations expressed in this material are those of the authors and do not necessarily reflect the views of the National Science Foundation.  The first-named author was also partially supported with an Alfred P. Sloan Foundation Fellowship}

\begin{abstract}
We discuss upper and lower bounds for the $L^2$ restriction norm of a Maass form on $\SL{n+1}{Z}$.
\end{abstract}

\maketitle

\section{Introduction}
It is a basic question in quantum chaos to understand how eigenfunctions of the Laplacian behave as the eigenvalue becomes large.  It is natural to study the $L^p$-norm of the eigenfunctions restricted to a submanifold of the original domain.  For very general results along these lines, see \cite{BGT} and \cite{Marshall}.
In some particularly interesting cases, these restriction norms are related to $L$-functions.  Here we study such a case where we consider a Maass form on $\SL{n+1}{Z} \backslash \mathcal{H}^{n+1}$ (with $n \geq 1$ arbitrary) restricted to $(\SL{n}{Z} \backslash \mathcal{H}^{n}) \times \mathbb{R}^+$, where here $\mathcal{H}^m = SL_{m}(\mr)/SO_m(\mr)$.  This leads to a family of $GL_{n+1} \times GL_{n}$ $L$-functions.  The space $\mathcal{H}^{n+1}$ is a real manifold of dimension $\frac{n(n+3)}{2}$ while $\mathcal{H}^n \times \mathbb{R}^+$ has $\frac{(n-1)(n+2)}{2} + 1$ dimensions, so this is a codimension $n$ restriction.  

For a given manifold, submanifold, and choice of $L^p$-norm to measure, it is not known what to expect the size of the $L^p$-norms to be.  For example, on the space $\SL{2}{\mz} \backslash \mathbb{H}$, it is known that the supremum norm can be $\gg \lambda^{1/12 - \varepsilon}$ \cite{SarnakMorowetz} though the point where this value is attained is high in the cusp (and changes with the eigenvalue).  On the other hand, if one restricts to the point $i$ (or any other fixed Heegner point) then the Waldspurger/Zhang \cite{Waldspurger} \cite{Zhang} formula relates the $L^2$-norm of a Maass form to the central values of a Rankin-Selberg $L$-function, and the Lindel\"{o}f Hypothesis gives a best-possible upper bound of $\lambda^{\varepsilon}$ here.  There are many interesting subtleties in understanding the sizes of these restriction norms \cite{SarnakMorowetz} \cite{SarnakReznikov} \cite{Milicevic} \cite{Templier}.  In a case where the restriction norm is related to a mean value of nonnegative $L$-functions, then the Lindel\"{o}f hypothesis should reveal its size.  However, the present example gives a case where even assuming the Lindel\"{o}f hypothesis, it is not immediately clear what it implies about the restriction norm.  It is only after some intricate combinatorial arguments that one can deduce the size of the restriction norm.  As part of this analysis, we compute the volumes of a certain parametrized family of $n$-dimensional polytopes (see Section \ref{section:polytope} below).  More than this, our application requires the estimation of an integral of a more complicated function over such a polytope. 

We set some notation in order to describe our results.  Let $F(z)$ be an even Hecke-Maass form for $\SL{n+1}{Z}$ with Fourier coefficients $A_F(m_1, \dots, m_n)$ and Langlands parameters $(i\alpha_1, \dots, i\alpha_{n+1})$ with $\alpha_j \in \mr $ for all $j$ (meaning the form is tempered) which satisfy $\alpha_1 + \dots + \alpha_{n+1} = 0$.  Suppose that $F$ is $L^2$-normalized on $\SL{n+1}{Z} \backslash \mathcal{H}^{n+1}$, and set
\begin{equation}
\label{eq:normdef}
 N(F) = \int_0^{\infty} \int_{SL_n(\mathbb{Z}) \backslash \mathcal{H}^n} \Big| F\begin{pmatrix} z_2 y & \\ & 1 \end{pmatrix} \Big|^2 d^*z_2 \frac{dy}{y},
\end{equation}
where $z_2$ is defined by \eqref{eq:z2def} below and $d^* z_2$ is the left-invariant $SL_n(\mathbb{R})$ measure on $\mathcal{H}^n$ (see Proposition 1.5.3 of \cite{Goldfeld}) given by
\begin{equation}
 d^*z_2 = \prod_{1 \leq i < j \leq n} d x_{i,j} \prod_{k=1}^{n-1} y_{k+1}^{-k(n-k)-1} dy_{k+1}.
\end{equation}
If $F$ is odd (so that $n+1$ is even, by Proposition 9.2.5 of \cite{Goldfeld}) then $N(F) = 0$, since the restriction of $F$ is invariant under $SL_{n}(\mz)$, and also odd, and hence zero by the same argument.

Let $\lambda(F)$ be the Laplace eigenvalue of $F$, i.e.,
\begin{equation}
 \lambda(F) = \frac{(n+1)^3-(n+1)}{24} + \half(\alpha_1^2 + \dots + \alpha_{n+1}^2).
\end{equation}

Our main goal here is to determine the size of $N(F)$.  For $n=1$, it is known that $1 \ll N(F) \ll \lambda(F)^{\varepsilon}$; see Theorem 6.1 of \cite{GRS}.  For $n=2$, \cite{LiYoung} showed $N(F) \ll \lambda(F)^{\varepsilon}$ under the assumption that
\begin{equation}
 \label{eq:FirstfourierUB}
 |A_F(1)|^2 \ll \lambda(F)^{\varepsilon}.
\end{equation}
Alternatively, one could assume a lower bound on the residue of $L(s, F \times \overline{F})$ at $s=1$; see Proposition \ref{prop:normformula} below.  This is a difficult problem amounting to showing the non-existence of a Landau-Siegel zero for the Rankin-Selberg $L$-function $L(s, F \times \overline{F})$.  For $GL_2 \times GL_2$, such a bound was famously shown by Hoffstein-Lockhart \cite{HL}.
The condition \eqref{eq:FirstfourierUB} is a consequence of the generalized Riemann hypothesis, but is also a consequence of the Langlands functoriality conjectures; see Theorem 5.10 of \cite{IK}.  
For $n \geq 3$, it seems to be quite ambitious to prove a strong bound on $N(F)$.  In fact, even with standard, powerful assumptions in number theory such as Langlands and Lindel\"{o}f, it is {\em still} challenging to see what is the size of $N(F)$.  As such, we shall undertake our investigations aided by some additional assumptions that we now describe.  

Besides \eqref{eq:FirstfourierUB} which was already needed for $n=2$, we also suppose that all Hecke-Maass forms are {\em tempered}, meaning that $\alpha_j \in \mr$ for all $j$.  If $F$ is an $\SL{n+1}{Z}$ Hecke-Maass form, we require temperedness of all Hecke-Maass forms on $\SL{m}{Z}$ with $2 \leq m \leq n$ (note this is known for $m=2$ unconditionally).  The Langlands functoriality conjectures imply temperedness \cite[Section 8]{Langlands}.
We also require what we shall call the {\em weighted local Weyl law} which we describe in more detail in Section \ref{section:Weyl} below; specifically see \eqref{eq:WLWLlower} and \eqref{eq:WLWLupper}.  The local Weyl law amounts to an estimate for the number of Langlands parameters $i \beta = (i\beta_1, \dots, i \beta_m)$ that lie in a box $\| \beta - \lambda \| \leq 1$ with $\lambda = (\lambda_1, \dots, \lambda_m) \in \mr^m$, $\lambda_1 + \dots + \lambda_m = 0$.  The {\em weighted} local Weyl law gives an estimate for the sum of such Langlands parameters $i\beta$ weighted by the first Fourier coefficient $|A_{\beta}(1)|^2$ of the corresponding Hecke-Maass form.  For $m=2$ this is a standard application of the Kuznetsov formula; Blomer \cite{Blomer} recently obtained the weighted local Weyl law for $m=3$ but for $m \geq 4$ this is open.
The weights occuring in the weighted local Weyl law are quite natural because they appear in the Kuznetsov formula which has seen extensive applications in number theory.

\begin{mytheo}
\label{thm:normUB}
 Assuming the generalized Lindel\"{o}f Hypothesis for Rankin-Selberg $L$-functions, temperedness of $F$ and for all Hecke-Maass forms on $\SL{m}{Z}$ with $2 \leq m \leq n$, the first Fourier coefficient bound \eqref{eq:FirstfourierUB}, and the weighted local Weyl law \eqref{eq:WLWLupper}, we have
\begin{equation}
\label{eq:NFUB}
 N(F) \ll_{n,\varepsilon} \lambda(F)^{\varepsilon}.
\end{equation}
\end{mytheo}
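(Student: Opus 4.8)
The plan is to expand the restriction of $F$ spectrally over $\SL{n}{Z}\backslash\mathcal H^n$ and by a Mellin transform in the ray parameter $y$, to identify the resulting periods with Rankin--Selberg $L$-functions through unfolding, to estimate those with the generalized Lindel\"of hypothesis together with \eqref{eq:FirstfourierUB} and the weighted local Weyl law, and finally to reduce the whole bound to an estimate for an integral of an explicit archimedean weight against the weighted spectral density over a polytope. It is this last, geometric, step that is the real content; it is carried out using the polytope computations of Section~\ref{section:polytope}.

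\emph{Step 1 (spectral/Mellin expansion and the norm formula).} Write $f(z_2,y)=F\begin{pmatrix}z_2 y&\\&1\end{pmatrix}$, a function on $(\SL{n}{Z}\backslash\mathcal H^n)\times\mr^+$; since $F$ is a cusp form its restriction decays rapidly as $y\to0$ and as $y\to\infty$, so $f\in L^2$. Expand $f(\cdot,y)$ in an orthonormal basis of $L^2(\SL{n}{Z}\backslash\mathcal H^n)$ --- Hecke--Maass cusp forms $u_j$ with Langlands parameters $i\mu_j=(i\mu_{j,1},\dots,i\mu_{j,n})$, the constant function, and the Eisenstein spectrum attached to the proper parabolic subgroups --- then apply Parseval together with the Mellin--Plancherel theorem in $y$. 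Unfolding $F$ against its Fourier--Whittaker expansion identifies each resulting period with a Rankin--Selberg $L$-function (recall the $GL_{n+1}\times GL_n$ Rankin--Selberg $L$-function is entire and its integral representation needs no auxiliary Eisenstein series); the upshot (Proposition~\ref{prop:normformula}) is a formula expressing $N(F)$ as $|A_F(1,\dots,1)|^2$ times a sum over the cuspidal spectrum of $\mathcal H^n$, plus the analogous Eisenstein contribution, plus finitely many polar terms, where the cuspidal sum has the shape
\begin{equation*}
\sum_j\intR|A_{u_j}(1,\dots,1)|^2\,\bigl|G(t;\alpha,\mu_j)\bigr|^2\,\bigl|L(\thalf+it,F\times\overline{u_j})\bigr|^2\,dt.
\end{equation*}
Here $\alpha=(\alpha_1,\dots,\alpha_{n+1})$ and $G(t;\alpha,\mu_j)$ is an explicit ratio of $\Gamma$-functions: its numerator is the archimedean Rankin--Selberg factor $\prod_{i=1}^{n+1}\prod_{k=1}^{n}\Gamma\bigl(\tfrac{\thalf+it+i\alpha_i-i\mu_{j,k}}{2}\bigr)$ (Stade's formula), and its denominator comes from the $\Gamma$-factors appearing in the normalizations of $u_j$ and of $F$. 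The prefactor $|A_F(1,\dots,1)|^2$ is $\ll_\varepsilon\lambda(F)^\varepsilon$ by \eqref{eq:FirstfourierUB} (equivalently, by a lower bound on $\mathrm{Res}_{s=1}L(s,F\times\overline F)$; cf. Proposition~\ref{prop:normformula}), and the polar terms are handled the same way. The Eisenstein contribution is reduced to lower-rank instances of the very same problem by factoring each $L(s,F\times\overline E)$ into Rankin--Selberg $L$-functions of $F$ against the cuspidal data on the Levi blocks --- which is where temperedness of the Hecke--Maass forms on $\SL{m}{Z}$ with $2\le m\le n$, and the weighted local Weyl law for those smaller groups, are used. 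It therefore suffices to bound the displayed cuspidal sum.

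\emph{Step 2 (Lindel\"of, the Weyl law, and the reduction to a polytope).} In the cuspidal sum, Lindel\"of gives $|L(\thalf+it,F\times\overline{u_j})|^2\ll_\varepsilon(1+|t|+\|\mu_j\|+\|\alpha\|)^{O_n(\varepsilon)}$, since the analytic conductor is $\asymp\prod_{i,k}(1+|t+\alpha_i-\mu_{j,k}|)$, which is polynomially bounded in those quantities. By Stirling's formula, although the individual $\Gamma$-factors in $G$ can be exponentially large or small, the weight $|G(t;\alpha,\mu_j)|^2$ is of only polynomial size in $\lambda(F)$ precisely when $(\mu_j,t)$ lies within a bounded distance of a certain polytope $\mathcal P_\alpha$ in the $(\mu,t)$-space --- the locus, cut out by interlacing-type linear inequalities among the $\alpha_i$ and the $\mu_{j,k}$, on which the exponential contributions of the numerator and denominator $\Gamma$-factors cancel (here temperedness of $F$ and of the $u_j$ enters, so that all $\alpha_i,\mu_{j,k}\in\mr$) --- while it is super-polynomially small otherwise; since $\|\mu_j\|,|t|\ll\|\alpha\|\ll\lambda(F)^{1/2}$ on $\mathcal P_\alpha$, the conductor power above is $\ll\lambda(F)^\varepsilon$ there. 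Using \eqref{eq:FirstfourierUB} for the prefactor, decomposing the sum over $u_j$ dyadically, and applying the weighted local Weyl law \eqref{eq:WLWLupper} on each unit box --- which bounds $\sum_{\|\mu_j-\mu\|\le1}|A_{u_j}(1,\dots,1)|^2$ by the weighted spectral density $\mathbf{p}_n(\mu)$, a function of polynomial growth in $\|\mu\|$ --- the bound $N(F)\ll\lambda(F)^\varepsilon$ reduces to
\begin{equation*}
\int_{\mathcal P_\alpha}\mathbf{p}_n(\mu)\,W_\alpha(\mu,t)\,d\mu\,dt\ \ll_{n,\varepsilon}\ \lambda(F)^\varepsilon,
\end{equation*}
where $W_\alpha(\mu,t)$ is the polynomial (Stirling) part of $|G(t;\alpha,\mu)|^2$ on $\mathcal P_\alpha$, an explicit product of fractional powers of the linear forms $\thalf+|t+\alpha_i-\mu_k|$ together with the reciprocals of the analogous normalizing quantities (the latter contributing \emph{positive} powers).

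\emph{The main obstacle.} The displayed polytope estimate is the heart of the matter, and it is genuinely nontrivial even granting Lindel\"of: $\mathcal P_\alpha$ is $n$-dimensional of diameter $\ll\lambda(F)^{1/2}$, so its volume is generically a positive power of $\lambda(F)$, and both $\mathbf{p}_n$ and the positive-power part of $W_\alpha$ grow with the parameters, so a crude bound would yield a positive power of $\lambda(F)$. What rescues the estimate is that the remaining negative-power fractional factors in $W_\alpha$ decay in the bulk of $\mathcal P_\alpha$ and force the mass of $\mathbf{p}_n W_\alpha$ to concentrate near the lower-dimensional faces, precisely so that the total integral is $O_{n,\varepsilon}(\lambda(F)^\varepsilon)$. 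The mechanism is already visible for $n=1$: there $\mathcal P_\alpha$ is the segment $|t|\le|\alpha|$ (with $\alpha=\alpha_1=-\alpha_2$), $W_\alpha(t)\asymp|t+\alpha|^{-1/2}|t-\alpha|^{-1/2}$, and $\int_{-|\alpha|}^{|\alpha|}(\alpha^2-t^2)^{-1/2}\,dt=\pi$ is bounded independently of $\alpha$, which with \eqref{eq:FirstfourierUB} and $|L(\thalf+it,F)|^2\ll\lambda(F)^\varepsilon$ yields $N(F)\ll\lambda(F)^\varepsilon$. For general $n$ the same cancellation persists, but establishing it requires first computing the volumes of the parametrized family of polytopes $\mathcal P_\alpha$ and then estimating the more delicate weighted integral $\int_{\mathcal P_\alpha}\mathbf{p}_n W_\alpha$ itself; this is the combinatorial--geometric work of Section~\ref{section:polytope}.
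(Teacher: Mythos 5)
Your Steps 1 and 2 track the paper's route faithfully: Proposition \ref{prop:normformula}, Stirling's formula applied to the weight $q(t,\alpha,\beta)$, and the identification (Lemma \ref{lemma:rzero}, \eqref{eq:interlacing}) of the region where the $\Gamma$-weight is not exponentially small as an interlacing condition. But the proposal stops exactly where the content of the theorem begins. You reduce everything to the estimate $\int_{\mathcal P_\alpha}\mathbf p_n(\mu)W_\alpha(\mu,t)\,d\mu\,dt\ll\lambda(F)^\varepsilon$, assert that ``the same cancellation persists'' for $n\ge 2$, and point to the polytope computations of Section \ref{section:polytope}. That appeal is misdirected: the volumes computed there (Corollary \ref{coro:polytopevolume}) are generically positive powers of $\lambda(F)$, so volume information alone cannot produce the upper bound, and in the paper that section is used only for the lower bound, Theorem \ref{thm:normLB}; the introduction even notes that for the upper bound the relevant region, after combining the spectral and $t$ variables, is simply a box.

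What is actually needed, and what your write-up does not supply, are the two points carrying the proof. First, a pointwise bound on the interlacing region showing that the $\Gamma$-factors coming from the entries $\alpha_k+\beta_l$ of the array \eqref{eq:alphabetaarray} with $k+l\le n$ or $k+l\ge n+3$ are collectively $\ll\mu(\beta)^{-1}$ (Lemma \ref{lemma:qtalphabetaUB}); this is the precise cancellation against the weighted spectral density $\mu(\beta)$ furnished by \eqref{eq:WLWLupper}, and it is the crux for $n\ge2$. Second, after the change of variables $\lambda_k=\beta_k+t$, the combined $\beta$-sum and $t$-integral becomes an integral over an $n$-dimensional box which factors into $n$ one-dimensional integrals of the shape \eqref{eq:hahb}, each bounded by an absolute constant (Lemma \ref{lemma:qUB}); only then do Lindel\"of, \eqref{eq:FirstfourierUB}, and the finitization via the exponential decay of $q$ off the region yield \eqref{eq:NFUB}. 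Your $n=1$ computation illustrates the second point in the degenerate case with no spectral sum, but your heuristic that ``mass concentrates near lower-dimensional faces'' neither identifies nor proves the $\mu(\beta)$-cancellation, which is the main obstacle you yourself flag. (Secondarily, your plan to handle the Eisenstein contribution by factoring $L(s,F\times\overline E)$ into lower-rank Rankin--Selberg $L$-functions and inducting differs from the paper, which simply observes that \eqref{eq:WLWLupper} already includes the continuous spectrum so the same archimedean argument carries over with $\beta$ as in \eqref{eq:LanglandsEisenstein}; your route could work but is likewise not carried out.)
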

\begin{mytheo}
\label{thm:normLB}
 Assuming temperedness for $F$ and for all Hecke-Maass forms on $\SL{m}{Z}$ with $2 \leq m \leq n$, the weighted local Weyl law \eqref{eq:WLWLlower}, and the spacing condition $|\alpha_j - \alpha_k| \geq \lambda(F)^{\varepsilon}$ for all $j \neq k$, we have
\begin{equation}
 N(F) \gg_{n,\varepsilon} \lambda(F)^{-\varepsilon}.
\end{equation}
\end{mytheo}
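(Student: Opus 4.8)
The plan is to spectrally expand the restriction of $F$ along $\SL{n}{Z}\backslash\mathcal{H}^n$, to use positivity to keep only one carefully chosen subfamily of the spectral terms, and to bound that subfamily from below by the diagonal contribution to a spectrally averaged second moment of $GL_{n+1}\times GL_n$ Rankin--Selberg $L$-functions. Expanding $z_2\mapsto F\begin{pmatrix} z_2 y & \\ & 1\end{pmatrix}$ over the spectrum of $\SL{n}{Z}\backslash\mathcal{H}^n$, integrating $|\,\cdot\,|^2$ against $d^*z_2\,\frac{dy}{y}$, taking a Mellin transform in $y$ and applying Plancherel, Proposition~\ref{prop:normformula} writes $N(F)$ as a sum of non-negative terms indexed by the Hecke--Maass cusp forms $\phi$ on $\SL{n}{Z}$ (plus a non-negative Eisenstein contribution); the term of a cusp form $\phi$ with real Langlands parameters $(i\beta_1,\dots,i\beta_n)$, $\sum_k\beta_k=0$, has the shape
\[
 |A_F(1)|^2\,|A_\phi(1)|^2 \int_{\mr} \bigl|L\bigl(\thalf+it,\,F\times\overline{\phi}\bigr)\bigr|^2\,\bigl|\gamma_\infty(\thalf+it)\bigr|^2\, dt,\qquad
 \gamma_\infty(s)=\prod_{i=1}^{n+1}\prod_{k=1}^{n}\Gamma_{\mr}\!\bigl(s+i\alpha_i-i\beta_k\bigr),
\]
$A_F(1)=A_F(1,\dots,1)$ and $A_\phi(1)$ being the first Fourier coefficients of the $L^2$-normalized forms. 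Since every term is $\geq0$, it suffices to exhibit one convenient family of $\phi$ already contributing $\gg\lambda(F)^{-\varepsilon}$. (When $n=1$ the $\phi$-sum is empty and $N(F)$ is a single Mellin--Plancherel integral of $|L(\thalf+it,F)|^2$ against two $\Gamma$-factors, recovering the computation behind Theorem~6.1 of \cite{GRS}.)

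Fix $r\in\{1,\dots,n+1\}$ and let $\mathcal{J}_r$ be the Hecke--Maass cusp forms $\phi$ on $\SL{n}{Z}$ whose parameters lie within $O(1)$ of the point obtained from $(\alpha_1,\dots,\alpha_{n+1})$ by deleting $\alpha_r$ and subtracting $\tfrac1n\sum_{i\ne r}\alpha_i$ from each remaining coordinate (the shift enforces the trace-zero constraint). The $\SL{n}{Z}$ Weyl law gives $\mathcal{J}_r\ne\varnothing$, temperedness makes its members tempered, and the \emph{weighted} local Weyl law \eqref{eq:WLWLlower} bounds $\sum_{\phi\in\mathcal{J}_r}|A_\phi(1)|^2$ from below by the volume of the box. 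For $\phi\in\mathcal{J}_r$, Stirling gives $|\gamma_\infty(\thalf+it)|^2\asymp Q_\phi(t)^{-1/2}e^{-\frac{\pi}{2}g_\phi(t)}$, where $g_\phi(t)=\sum_{i,k}|t+\alpha_i-\beta_k|$ and $Q_\phi$ is the analytic conductor of $L(s,F\times\overline{\phi})$, while $L^2$-normalization and the Rankin--Selberg residue formula make $|A_F(1)|^2$ and $|A_\phi(1)|^2$ exponentially large, with logarithmic sizes $\tfrac{\pi}{4}\sum_{i\ne i'}|\alpha_i-\alpha_{i'}|$ and $\tfrac{\pi}{4}\sum_{k\ne k'}|\beta_k-\beta_{k'}|$ (these being, up to $\lambda(F)^{o(1)}$, the archimedean parts of $L(1,F\times\overline F)$ and $L(1,\phi\times\overline\phi)$). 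The crucial archimedean input is the elementary identity
\[
 \min_{t\in\mr}\ \sum_{i=1}^{n+1}\sum_{k=1}^{n}\bigl|t+\alpha_i-\beta_k\bigr|\;=\;\tfrac12\sum_{i\ne i'}|\alpha_i-\alpha_{i'}|\;+\;\tfrac12\sum_{k\ne k'}|\beta_k-\beta_{k'}|,
\]
valid precisely when $\{\beta_k\}$ is a translate of $\{\alpha_i\}_{i\ne r}$; equivalently, the archimedean conductor of $L(s,F\times\overline{\phi})$ drops to the smallest value allowed by the two parameter multisets being nested. Plugging this in cancels the three exponential factors, so $|A_F(1)|^2|A_\phi(1)|^2\max_t|\gamma_\infty(\thalf+it)|^2\asymp Q_\phi^{-1/2}\lambda(F)^{o(1)}$, and the desired bound reduces to a lower bound for $\sum_{\phi\in\mathcal{J}_r}\int_{I}|L(\thalf+it,F\times\overline{\phi})|^2\,dt$ over a short interval $I$ about the peak of $|\gamma_\infty|^2$. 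Opening $|L|^2$ by the approximate functional equation, the diagonal term is non-negative and even its $m=m'=1$ part is $\gg|I|$ for each $\phi$; combined with \eqref{eq:WLWLlower}, with the trivial bound $\mathrm{Res}_{s=1}L(s,F\times\overline F)\ll\lambda(F)^{\varepsilon}$ (so that $|A_F(1)|^2$ is at least its expected size up to $\lambda(F)^\varepsilon$ --- no Siegel-zero input is needed), and with a balancing of the polynomial factors $Q_\phi$, $\mathrm{vol}(\mathcal{J}_r)$, $|I|$ (a count of the same flavour as the polytope volumes of Section~\ref{section:polytope}, and the place where the choice of $r$ --- and possibly a refinement of the naive box --- matters), the diagonal term alone would give $N(F)\gg\lambda(F)^{-\varepsilon}$.

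The hard part is showing that the off-diagonal and dual contributions to $\sum_{\phi\in\mathcal{J}_r}\int_{I}|L(\thalf+it,F\times\overline{\phi})|^2\,dt$ are negligible, i.e.\ do not cancel the diagonal main term. Since for $n\ge2$ the conductor $Q_\phi$ is a large power of $\lambda(F)$ while $I$ is short, the $t$-integration alone cannot win, and one must exploit cancellation in the sum over $\phi\in\mathcal{J}_r$; this is exactly where the spacing hypothesis $|\alpha_j-\alpha_k|\ge\lambda(F)^{\varepsilon}$ (which keeps the families $\mathcal{J}_1,\dots,\mathcal{J}_{n+1}$ genuinely disjoint and the pairing of the $\beta_k$ with the $\alpha_i$ unambiguous) and the Ramanujan bound from temperedness enter decisively, alongside \eqref{eq:WLWLlower}. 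This is the easier, lower-bound side of the story: for the upper bound one cannot discard any portion of the spectral sum, which is what forces the full polytope volume and integral estimates; here a single box suffices, but one must still control the second-moment off-diagonal within it.
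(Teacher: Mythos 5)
Your proposal follows the paper's opening moves (positivity in Proposition \ref{prop:normformula}, Stirling analysis of the archimedean weight, the weighted Weyl law \eqref{eq:WLWLlower}, and Xiannan Li's residue bound for $|A_F(1)|^2$), but it goes off course at the decisive step and leaves a genuine gap exactly where you write ``the hard part.'' Two points. First, your ``crucial archimedean identity'' is misstated: by Lemma \ref{lemma:rzero} the exponential factor is non-decaying (i.e.\ $r(t,\alpha,\beta)=0$ for some $t$) if and only if the interlacing condition \eqref{eq:alphajbetakcondition}/\eqref{eq:interlacing} holds, which describes a full $n$-dimensional region (the zonotope $\mathcal{P}$ of Section \ref{section:polytope}), not merely translates of $\{\alpha_i\}_{i\neq r}$; those deletion points are only distinguished boundary points of $\mathcal{P}$, and for $1<r<n+1$ the admissible $t$-interval $I_M$ there degenerates to length $O(1)$. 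Second, because you confine yourself to an $O(1)$ spectral ball $\mathcal{J}_r$ and a short $t$-interval, you are forced to ask for a lower bound on $\sum_{\phi\in\mathcal{J}_r}\int_I|L(\tfrac12+it,F\times\overline{\phi})|^2\,dt$ in which off-diagonal and dual terms must be shown not to cancel the diagonal by exploiting cancellation over $\phi$. For $n\geq 2$ this is a second-moment problem for a family whose (weighted) size is far smaller than the conductor, well beyond available technology (the paper notes even the weighted Weyl law is open for $m\geq 4$), and you do not supply an argument for it --- so the proof is incomplete at its crux.

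The paper's proof shows this difficulty is self-inflicted and avoidable. It restricts not to a point but to the sub-parallelohedron $\mathcal{Q}_{j_0}^*\subset\mathcal{P}$ (Theorem \ref{thm:Q=P}), chosen so that for every admissible $\beta$ there one has $|I_M|\gg y_{j_0}+y_{k_0}\gg\lambda(F)^{\varepsilon}$ --- this is precisely where the spacing hypothesis enters, not to separate families $\mathcal{J}_r$. On an interval of length exceeding $Q^{\varepsilon}$, Proposition \ref{prop:secondmomentLowerBound} gives $\int_{I}|L(\tfrac12+it,F\times u_j)|^2\,dt\geq |I|+O(Q^{-99})$ for each \emph{individual} $L$-function, by Cauchy--Schwarz against a bump function and a one-piece approximate functional equation in which only the term $m=1$ survives; no off-diagonal analysis or cancellation over the spectral family is needed, positivity does the rest. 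The remaining polynomial bookkeeping --- the lower bound for $q$ on $I_M$ (Lemma \ref{lemma:qtLB}), the weighted Weyl law \eqref{eq:WLWLlower}, and the volume of $\mathcal{Q}_{j_0}^*$ from Corollary \ref{coro:polytopevolume} --- balances exactly in Proposition \ref{prop:normLBwithoutLfunction}, and this is where the full zonotope structure (which your single-box choice discards) is genuinely used. As a smaller point, in the paper's normalization $|A_F(1)|^2$ is \emph{not} exponentially large: Proposition \ref{prop:RankinSelbergL2formula} shows the archimedean factors cancel and $|A_F(1)|^2\asymp(\mathrm{Res}_{s=1}L(s,F\times\overline{F}))^{-1}$, the exponential factors being carried entirely by $q(t,\alpha,\beta)$; your bookkeeping reaches a consistent conclusion but conflates normalizations.
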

Taken together, Theorems \ref{thm:normUB} and \ref{thm:normLB} largely pin down the size of $N(F)$.  The condition that $|\alpha_j - \alpha_k| \geq \lambda(F)^{\varepsilon}$ in the lower bound can probably be removed; see Section \ref{section:walls}.  The spacing condition is relevant because the upper bound on the supremum norm of $F$ becomes smaller when the $\alpha_j$'s are closely spaced; see p.42 of \cite{SarnakMorowetz}.  We chose to present the full details of the proof under the spacing assumption for simplicity of exposition.  

In Theorem \ref{thm:normLB} we do not require \eqref{eq:FirstfourierUB}; rather, we use the lower bound $|A_F(1)|^2 \gg \lambda(F)^{-\varepsilon}$ which is a consequence of the convexity bound for Rankin-Selberg $L$-functions proved in general by Xiannan Li \cite{XiannanLi}.  In fact, our proof of Theorem \ref{thm:normLB} shows the stronger bound $N(F) \gg \lambda(F)^{-\varepsilon} |A_F(1)|^2$, and hence if \eqref{eq:NFUB} holds, we \emph{deduce} \eqref{eq:FirstfourierUB}.  In this way, we see that the upper bound on $N(F)$ is inextricably linked with an upper bound on the first Fourier coefficient.  
Note that Theorem \ref{thm:normLB} is unconditional for $n=2$.

We conjecture that the right order of magnitude of $N(F)$ is given by
\begin{myconj}
\label{conj:asymptotic}
 Based on the conjectures of \cite{CFKRS}, we conjecture
\begin{equation}
 N(F) = C_n(\alpha) \log \lambda(F) + o(\log \lambda(F)),
\end{equation}
where $C_n(\alpha)$ is a function of the Langlands parameters of $F$ which satisfies $C_n(\alpha) \asymp_n 1$.
\end{myconj}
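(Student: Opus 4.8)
We indicate how Conjecture \ref{conj:asymptotic} is predicted by the recipe of \cite{CFKRS}. The first step is an exact period formula, Proposition \ref{prop:normformula}: unfolding the inner integral of \eqref{eq:normdef} against the spectral decomposition of $L^2(SL_n(\mathbb{Z})\backslash\mathcal{H}^n)$ and applying Mellin--Plancherel in the variable $y$, one writes $N(F)$ as a weighted spectral sum of central second moments of the $GL_{n+1}\times GL_n$ Rankin--Selberg $L$-functions $L(s,F\times\overline{u_j})$, of the schematic form
\begin{equation*}
 N(F) \approx |A_F(1)|^2 \sum_{j} \frac{|A_{u_j}(1)|^2}{\|u_j\|^2}\, \frac{1}{2\pi}\int_{-\infty}^{\infty} \big|L(\tfrac12+it,F\times\overline{u_j})\big|^2\, H(t,\alpha,\mu_j)\, dt \;+\; (\text{continuous spectrum}),
\end{equation*}
where $\mu_j$ is the tuple of Langlands parameters of $u_j$ and $H\geq 0$ is the archimedean weight produced by Stade's formula for the $GL_{n+1}(\mathbb{R})\times GL_n(\mathbb{R})$ local integral. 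By Stirling applied to the Gamma factors, $H(t,\alpha,\mu_j)$ is super-polynomially small unless $(t,\mu_j)$ satisfies a finite system of linear inequalities relative to $\alpha$; equivalently, after dividing by $\|\alpha\|\asymp\sqrt{\lambda(F)}$, unless $(\mu_j,t)$ lies in a fixed bounded $n$-dimensional polytope $\mathcal{P}(\alpha)$, on which $H$ is of polynomial size. Describing $\mathcal{P}(\alpha)$ and the size of $H$ on it is precisely the polytope analysis of Section \ref{section:polytope}.

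The second step is the recipe of \cite{CFKRS} for the family $\{L(s,F\times\overline{u_j})\}$, which (for fixed $F$) behaves like a unitary family in $u_j$: averaging over $u_j$ in a unit box of parameters and over a short range of $t$, it predicts that the mean value of $\big|L(\tfrac12+it,F\times\overline{u_j})\big|^2$ equals $(c(\alpha,\mu_j)+o(1))\log\mathfrak{q}_j$, where $\mathfrak{q}_j$ is the analytic conductor of $L(s,F\times\overline{u_j})$ and $c(\alpha,\mu_j)$ is an explicit product of local densities. Since the completed $L$-function carries a bounded number, $n(n+1)$, of Gamma factors, each of modulus polynomially bounded in $\lambda(F)$, one has $\log\mathfrak{q}_j=(1+o(1))\log\lambda(F)$ uniformly over $(\mu_j,t)\in\mathcal{P}(\alpha)$. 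Substituting this and using the weighted local Weyl law \eqref{eq:WLWLlower}--\eqref{eq:WLWLupper} to replace $\sum_j|A_{u_j}(1)|^2/\|u_j\|^2$ by an integral against the local Plancherel density $d\mu_{\mathrm{Pl}}$, one obtains
\begin{equation*}
 N(F) = \Big(|A_F(1)|^2\int_{\mathcal{P}(\alpha)} c(\alpha,\mu)\,H(t,\alpha,\mu)\, d\mu_{\mathrm{Pl}}\, dt \;+\; o(1)\Big)\log\lambda(F),
\end{equation*}
and one sets $C_n(\alpha)$ equal to the parenthesized quantity. Note that the generalized Lindel\"{o}f hypothesis alone only gives $\big|L(\tfrac12+it,F\times\overline{u_j})\big|^2\ll\lambda(F)^{\varepsilon}$ and hence the bound of Theorem \ref{thm:normUB}; extracting the constant $C_n(\alpha)$ requires the full main term of the recipe.

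It remains to check $C_n(\alpha)\asymp_n 1$. For the upper bound one uses \eqref{eq:FirstfourierUB} together with the boundedness of the polytope integral, which is the content of (the combinatorial core of) Theorem \ref{thm:normUB}. The lower bound is the delicate side: one must show that the nonnegative integrand $c(\alpha,\mu)H(t,\alpha,\mu)$, weighted by the Plancherel density, is not small on a positive proportion of $\mathcal{P}(\alpha)$, that $\mathcal{P}(\alpha)$ has rescaled volume bounded below, and --- as in Theorem \ref{thm:normLB} and the discussion of Section \ref{section:walls} --- that no degeneration of $\mathcal{P}(\alpha)$ near the walls $\alpha_j=\alpha_k$ destroys this, which is why one expects to impose the spacing condition or to argue it away. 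Combined with the conjectural asymptotic $|A_F(1)|^2\,\mathrm{Res}_{s=1}L(s,F\times\overline{F})\asymp_n 1$ (cf. Proposition \ref{prop:normformula}), this yields $C_n(\alpha)\asymp_n 1$. I expect the main obstacle to be exactly this uniform two-sided control of the polytope integral: beyond the fact that both the recipe of \cite{CFKRS} and, for $m\geq 4$, the weighted local Weyl law are themselves open, one must show that the "more complicated function" of Section \ref{section:polytope} has integral over $\mathcal{P}(\alpha)$ bounded above and below uniformly in $\alpha$, which is exactly the intricate combinatorics underlying Theorems \ref{thm:normUB} and \ref{thm:normLB}.
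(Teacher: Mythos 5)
Your overall skeleton matches the paper's heuristic in Section \ref{section:asymptotic}: start from Proposition \ref{prop:normformula}, feed the family into the recipe of \cite{CFKRS}, observe that the archimedean weight localizes $(\beta_j,t)$ to the polytope of Section \ref{section:polytope} on which the log-conductor is $\asymp \log\lambda(F)$, and then reduce $C_n(\alpha)$ to an archimedean integral over that region whose two-sided boundedness is exactly the combinatorics behind Theorems \ref{thm:normUB} and \ref{thm:normLB} (the paper's upper bound for the integral \eqref{eq:Calpha} is Lemma \ref{lemma:qtalphabetaUB} plus \eqref{eq:hahb}; the lower bound is Section \ref{section:lowerbound}). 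Where you differ is in how the CFKRS input is packaged: you posit a per-unit-box prediction with an unspecified ``product of local densities'' $c(\alpha,\mu_j)$, whereas the paper applies the recipe to the whole $j$-sum and $t$-integral at a shifted point $1/2\pm z$, so that the diagonal $m_k=m_k'$ assembles into $|A_F(1)|^2\,L(1+2z,F\times\overline{F})$, and the factor $\log\lambda(F)$ then emerges from the logarithmic derivative $q'/q$ in the $z\to 0$ limit (with $r'\ll\log\log\lambda(F)$ under GRH and Ramanujan).

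The genuine gap is in your closing argument that $C_n(\alpha)\asymp_n 1$. Because you never identify the arithmetic factor $c(\alpha,\mu)$, the constant you define is not pinned down, and you patch this by invoking \eqref{eq:FirstfourierUB} for the upper bound and ``the conjectural asymptotic $|A_F(1)|^2\,\mathrm{Res}_{s=1}L(s,F\times\overline{F})\asymp_n 1$'' for the lower bound. That last statement is not a conjecture: by Proposition \ref{prop:RankinSelbergL2formula} (not Proposition \ref{prop:normformula}, which you cite) and the normalization $\langle F,F\rangle=1$, the product $|A_F(1)|^2\,\mathrm{Res}_{s=1}L(s,F\times\overline{F})$ is unconditionally a constant depending only on $n$ --- it is precisely this identity, applied after the diagonal is recognized as $|A_F(1)|^2 L(1+2z,F\times\overline{F})$, that makes the arithmetic factor drop out in the paper, so that neither \eqref{eq:FirstfourierUB} nor any residue bound is needed for Conjecture \ref{conj:asymptotic}; what remains is purely the archimedean integral \eqref{eq:Calpha}. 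In your formulation, keeping $|A_F(1)|^2$ and an unspecified $c(\alpha,\mu)$ as separate unknowns, the claim $C_n(\alpha)\asymp_n 1$ does not follow from what you wrote. A smaller inaccuracy: $\log\mathfrak{q}_j=(1+o(1))\log\lambda(F)$ is not right as stated (the conductor has $n(n+1)$ archimedean factors, so the constant in front of $\log\lambda(F)$ need not be $1$); only $\log\mathfrak{q}_j\asymp_n\log\lambda(F)$ holds on the polytope, which is all that the $\asymp$ form of the conjecture requires.
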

It is not clear for $n > 1$ if $C_n(\alpha) \sim_n C_n$ for some constant $C_n$ independent of $\alpha$; see \eqref{eq:Calpha} for the form of $C_n(\alpha)$ which is an $n$-fold integral involving ratios of gamma functions.

The previous results all use as a starting point a formula for $N(F)$ in terms of Rankin-Selberg $L$-functions.  Specifically, we apply the $GL_n$ spectral decomposition to \eqref{eq:normdef} and calculate the resulting integrals in terms of this family of $L$-functions; this is given in Proposition \ref{prop:normformula}.  The Archimedean factors in this formula, crucially calculated by Stade in terms of gamma functions \cite{StadeAJM},
govern the practical parameterization of the family of $L$-functions.  Of course, the sum over the $GL_n$ spectrum and the integral on the right hand side of \eqref{eq:normformula} is infinite but except for a finite region, the Archimedean factors are exponentially small (following from Stirling's formula) and do not contribute to $N(F)$ in a practical sense.  We carry out this analysis in Section \ref{section:Archimedean}.
This region turns out to be closely related to a problem in representation theory, namely, how an irreducible, finite-dimensional representation of $\GL{n+1}{C}$ decomposes into irreducibles when restricted to $\GL{n}{C}$.  We explain this following the proof of Lemma \ref{lemma:rzero}; somehow the gamma factors in \eqref{eq:qtalphabetaDEF} are analytically detecting this decomposition.  Using this decomposition and the Lindel\"{o}f hypothesis, we are able to deduce Theorem \ref{thm:normUB} in Section \ref{section:upperbound}.  The formula \eqref{eq:normformula} below involves a sum over the $GL_n$ spectrum, expressed in terms of the Langlands parameters $\beta = (\beta_1, \dots, \beta_n)$ which live on the hyperplane $\beta_1 + \dots + \beta_n = 0$, as well as a $t$-integral over the real line.  When combined, the relevant region becomes an $n$-dimensional box with sides parallel to the standard basis vectors of $\mathbb{R}^n$.

Our proof of Theorem \ref{thm:normLB} is more difficult than that of Theorem \ref{thm:normUB}.  The underlying reason is that we may use Lindel\"{o}f to obtain a uniform upper bound on the $L$-functions in the family, but there does not exist a uniform lower bound (of course $L$-functions are expected to have zeros on the critical line!).  Instead, we rely on a very soft argument: on any sufficiently long interval (say at least some small power of the analytic conductor), the second moment of an individiual $L$-function in the $t$-aspect is at least half the length of the interval; for a precise statement, see Proposition \ref{prop:secondmomentLowerBound}.  To use this argument, we need to understand the relevant set of $\beta$'s for which the $t$-integral is long enough to use this lower bound.
Instead of obtaining a box in $\mathbb{R}^n$, we obtain a convex polytope.  This region is described in Section \ref{section:polytope}.  This polytope has a very special structure and we show that it is a zonotope which is in fact naturally given as an affine projection of the $A_n$ lattice.  Using this structure of the polytope, we are able to complete the proof of Theorem \ref{thm:normLB} in Section \ref{section:lowerbound}.  In Section \ref{section:walls} we briefly discuss relaxing the spacing condition $|\alpha_j - \alpha_k| \geq \lambda(F)^{\varepsilon}$ which appears in Theorem \ref{thm:normLB}.  Finally, in Section \ref{section:asymptotic} we discuss Conjecture \ref{conj:asymptotic}.

Throughout the paper we often view $n$ as fixed and we may not always display the dependence of implied constants on $n$.  We also use the common convention of letting $\varepsilon >0$ vary from line-to-line.

\section{Maass forms and period integrals}
We assume some familiarity with Goldfeld's book \cite{Goldfeld}.  Our goal in this section is to produce a formula for $N(F)$ in terms of Rankin-Selberg $L$-functions which we give in Proposition \ref{prop:normformula} below.

Let $u_j(z)$ be a Hecke-Maass form for $\Gamma:=SL_n(\mathbb{Z})$ with Fourier coefficients $B_j(m_2, \dots, m_n)$ and Langlands Parameters $(i\beta_{1,j}, \dots, i\beta_{n,j})$ with $\text{Re}(i\beta_{i,j}) = 0$ for all $i$.  Set $A_F(m_1, \dots, m_n) = A_F(1,\dots, 1) \lambda_F(m_1, \dots, m_n)$ and similarly $B_j(m_2, \dots, m_n) = B_j(1, \dots, 1) \lambda_j(m_2, \dots, m_n)$.  For brevity we sometimes write $A_F(1,\dots, 1) =: A_F(1)$ and $B_j(1,\dots, 1) =: B_j(1)$.  We may assume each Maass form is either even or odd according to if $\lambda(m_1, \dots, - m_q) = \pm \lambda(m_1, \dots, m_q)$; see Proposition 9.2.6 in \cite{Goldfeld}.
The Rankin-Selberg $L$-function on $GL_{n+1} \times GL_n$ is given by
\begin{equation}
\label{eq:RankinSelbergCuspidal}
 L(s, F \times \overline{u_j}) = \sum_{m_1 \geq 1} \dots \sum_{m_{n} \geq 1} 
\frac{\lambda_F(m_1, \dots, m_n) \overline{\lambda_j}(m_2, \dots, m_n)}{\prod_{k=1}^{n} m_k^{(n+1-k)s}}. 
\end{equation}
Let
\begin{equation}
 P_{\text{min}} := \left\{ \begin{pmatrix} * & * & \dots & * \\ & * & \dots & * \\  &  & \ddots & * \\  &  &  & * \end{pmatrix} \in \GL{n}{R} \right\},
\end{equation}
and define the minimal parabolic Eisenstein series
\begin{equation}
 E_{P_{\text{min}}}(z, iw) = \sum_{\gamma \in P_{\text{min}} \cap \Gamma \backslash \Gamma} I_{iw}(\gamma z),
\end{equation}
with
\begin{equation}
\label{eq:Ifunction}
 I_{iw}(z) = \prod_{j=1}^{n-1} \prod_{k=1}^{n-1} y_j^{ b_{j,k} i w_k}, \qquad b_{j,k} = \begin{cases} jk, \quad &\text{if } j + k \leq n \\ (n-j)(n-k), \quad &\text{if } j+k \geq n.                                                                                                                                                     \end{cases}
\end{equation}
Here $\text{Re}(i w_k) = \frac{1}{n}$, the Langlands parameters of $E_{P_{\text{min}}}$ are $(i\beta_{1,w}, \dots, i\beta_{n,w})$ with $\text{Re}(i\beta_{j,w}) = 0$ for all $j$, and satisfying
\begin{equation}
i w_k = \frac{1}{n}(1 + i\beta_{k,w} - i\beta_{k+1,w}), \qquad 1 \leq k \leq n-1.
\end{equation}
Let $B_{P_{\text{min}}}(m_2, \dots, m_n) = B_{P_{\text{min}}}(1,\dots, 1) \lambda_{P_{\text{min}}}(m_2, \dots, m_n)$ be the non-degenerate Fourier coefficients of $E_{P_{\text{min}}}$, and define
\begin{equation}
\label{eq:RankinSelbergEisenstein}
 L(s, F \times \overline{E}_{P_{\text{min}}}) = 
  \sum_{m_1 \geq 1} \dots \sum_{m_{n-1} \geq 1} \sum_{m_n \geq 1} \frac{\lambda_F(m_1, \dots, m_n) \overline{\lambda_{P_{\text{min}}}}(m_2, \dots, m_n)}{\prod_{k=1}^{n} |m_k|^{(n+1-k)s}}. 
\end{equation}
For $r \geq 2$ let $P = P_{n_1, \dots, n_r}$ be a standard parabolic subgroup of $\GL{n}{R}$ associated to the partition $n=n_1 + \dots + n_r$, i.e.,
\begin{equation}
 P =NM = \left\{ \begin{pmatrix} I_{n_1} & * & \dots & * \\ & I_{n_2} & \dots & * \\  &  & \ddots & * \\  &  &  & I_{n_r} \end{pmatrix} \begin{pmatrix} m_{n_1} &  &  &  \\ & m_{n_2} &  &  \\  &  & \ddots &  \\  &  &  & m_{n_r} \end{pmatrix} \in GL_n(\mathbb{R}) \right\},
\end{equation}
where $I_k$ is the $k\times k$ identity matrix and $m_k \in \GL{k}{R}$.  Let $\phi_j = (\phi_{j_1}, \dots \phi_{j_r})$ be a vector of $r$ Hecke-Maass forms where $\phi_{j_k}$ with $1\leq k \leq r$ runs through an orthogonal basis of $C_{\text{cusp}}(\SL{n_k}{Z})$ with first Fourier coefficient equal to $1$, and the Langlands parameters of $\phi_{j_k}$ are 
\begin{equation}
(i\beta_{j_k, \eta_k +1}, \dots, i\beta_{j_k, \eta_k + n_k}), \quad \text{where } \eta_1 =0, \quad \eta_k = n_1 + \dots + n_{k-1},\text{ for } k > 1.
\end{equation}
Note that $\eta_k + n_k = \eta_{k+1}$ for $k \geq 1$.
  Then for $v = (v_1, \dots, v_r) \in \mathbb{C}^r$ with $\sum_{k=1}^{r} n_k v_k = 0$ define the cuspidal Eisenstein series
\begin{equation}
 E_P(z, iv, \phi_j) = \sum_{\gamma \in P \cap \Gamma \backslash \Gamma} \prod_{k=1}^{r} \phi_{k,j}(m_{n_k}(\gamma z)) I_{iv}(\gamma z, P)
\end{equation}
as in \cite{Goldfeld}, Definition 10.5.3.  Also assume
\begin{equation}
 \text{Re}(v_k + \eta_k + \frac{n_k - n}{2}) = 0,
\end{equation}
and let
\begin{equation}
 i v_k^* = v_k + \eta_k + \frac{n_k - n}{2}.
\end{equation}
Notice in \cite{Goldfeld} p.318 Proposition 10.9.3, $s_k + \eta_k$ should be $s_k + \eta_k + \frac{n_k-n}{2}$.  The Langlands parameters of $E_{P_{n_1, \dots, n_r}}$ are the components of $i\beta$ where
\begin{equation}
\label{eq:LanglandsEisenstein}
\beta = (v_1^* + \beta_{j_1,1}, \dots, v_1^*+ \beta_{j_1, n_1} \big| v_2^* + \beta_{j_2, n_1 + 1}, \dots, v_2^* + \beta_{j_2, n_1 + n_2} \big| \dots). 
\end{equation}
Here the notation indicates that $\beta$ has $n$ components, broken into $r$ blocks of size $n_1, n_2, \dots, n_r$; the vertical lines separate these blocks.
Let 
\begin{equation}
B_{{P_{n_1, \dots, n_r}}}(m_2, \dots, m_n) = B_{{P_{n_1, \dots, n_r}}}(1, \dots, 1)\lambda_{{P_{n_1, \dots, n_r}}}(m_2, \dots, m_n)
\end{equation}
 be the non-degenerate Fourier coefficients of $E_{P_{n_1, \dots, n_r}}$.  As before, one can define the Rankin-Selberg $L$-function as in \eqref{eq:RankinSelbergEisenstein}.

Our formula for $N(F)$ requires the following definitions.  Recall that $F$ is even.  Let
\begin{equation}
 \mathcal{L}(s, F \times \overline{u_j}) = 2\overline{B_j}(1) A_F(1) L(s, F \times \overline{u_j}) G_j(s),
\end{equation}
if $u_j$ is even, and $\mathcal{L}(s, F \times \overline{u_j}) = 0$ if $u_j$ is odd, with
\begin{multline}
 G_j(s) = G_j^*(s) \pi^{\frac{n^2}{2} - \sum_{l=1}^{n-1} \sum_{l \leq k \leq n-1} v_{l,k,j} - \sum_{l=1}^{n} \sum_{l \leq k \leq n} v_{l,k}^*}
\\
 \Big(\prod_{1 \leq k < l \le n+1} \Gamma(\frac{1+ i\alpha_k - i\alpha_l}{2}) \Big)^{-1} \Big(\prod_{1 \leq k < l \le n} \Gamma(\frac{1- i\beta_{k,j} + i\beta_{l,j}}{2}) \Big)^{-1},
\end{multline}
\begin{equation}
 G_j^*(s) = 2^{-n} \pi^{-\frac{n(n+1)}{2} s} \prod_{l=1}^{n} \prod_{k=1}^{n+1} \Gamma(\frac{s- i\alpha_k + i\beta_{l,j}}{2}),
\end{equation}
and with
\begin{equation}
 v_{l,k,j} = 
 \frac{i}{2}(\beta_{n-k,j} - \beta_{n-k+l, j}),
\quad
 v_{l,k}^* = 
\frac{i}{2}(\alpha_{n+1-k} - \alpha_{n+1-k+l}).
\end{equation}
Similarly, 
\begin{equation}
 \mathcal{L}(s, F \times \overline{E}_{P_{n_1, \dots, n_r}}) = 2\overline{B}_{P_{n_1, \dots, n_r}}(1) A_F(1) L(s, F \times \overline{E}_{P_{n_1, \dots, n_r}}) G_{j,P}(s),
\end{equation}
if $E_{P_{n_1, \dots, n_r}}$ is even (and vanishes if it is odd),
with
\begin{multline}
 G_{j,P}(s) = G_{j,P}^*(s) \pi^{\frac{n^2}{2} - \sum_{l=1}^{n-1} \sum_{l \leq k \leq n-1} v_{l,k,P_{n_1, \dots, n_r}} - \sum_{l=1}^{n} \sum_{l \leq k \leq n} v_{l,k}^*}
 \prod_{1 \leq k < l \le n+1} \Gamma(\frac{1+ i\alpha_k - i\alpha_l}{2})^{-1} 
\\
\Big(\prod_{1 \leq k_1 \leq k_2 \leq r} \mathop{\prod_{1 \leq l_1 \leq n_{k_1}} \prod_{1 \leq l_2 \leq n_{k_2}} }_{1 \leq \eta_{k_1} + l_1 < \eta_{k_2} + l_2 \leq n} \Gamma(\frac{1- i(v_{k_1}^* + \beta_{j_{k_1}, \eta_{k_1} + l_1}) + i(v_{k_2}^* + \beta_{j_{k_2}, \eta_{k_2} + l_2})}{2}) \Big)^{-1},
\end{multline}
where
\begin{equation}
 G_{j,P}^*(s) = 2^{-n} \pi^{-\frac{n(n+1)}{2} s} \prod_{m=1}^r \prod_{l=1}^{n_m} \prod_{k=1}^{n+1} \Gamma(\frac{s- i\alpha_k + i(v_m^* + \beta_{j_m, \eta_m+l})}{2}),
\end{equation}
and with
\begin{equation}
 v_{l,k,P_{n_1, \dots, n_r}} = \frac{i}{2} (\beta_{n-k, P_{n_1,\dots, n_r}} - \beta_{n-k+l, P_{n_1, \dots, n_r}}),
\end{equation}
where $i\beta_{m, P_{n_1, \dots, n_r}}$ is the $m$-th Langlands parameter of $E_{P_{n_1, \dots, n_r}}$.  Finally, 
\begin{equation}
 \mathcal{L}(s, F \times \overline{E}_{min}(\cdot, w)) = 2\overline{B}_{P_{min}}(1) A_F(1) L(s, F \times \overline{E}_{P_{min}}(\cdot, w)) G_{min}(s)
\end{equation}
if $E_{P_{min}}$ is even (and vanishes if it is odd),
with
\begin{multline}
 G_{min}(s) = G_{min}^*(s) \pi^{\frac{n^2}{2} - \sum_{l=1}^{n-1} \sum_{l \leq k \leq n-1} v_{l,k,P_{min}} - \sum_{l=1}^{n} \sum_{l \leq k \leq n} v_{l,k}^*}
\\
 \Big(\prod_{1 \leq k < l \le n+1} \Gamma(\frac{1+ i\alpha_k - i\alpha_l}{2}) \Big)^{-1} \Big(\prod_{1 \leq k < l \le n} \Gamma(\frac{1- i\beta_{k,w} + i\beta_{l,w}}{2}) \Big)^{-1},
\end{multline}
\begin{equation}
 G_{min}^*(s) = 2^{-n} \pi^{-\frac{n(n+1)}{2} s} \prod_{l=1}^{n} \prod_{k=1}^{n+1} \Gamma(\frac{s- i\alpha_k + i\beta_{l,w}}{2}),
\end{equation}
\begin{equation}
 v_{l,k,P_{min}} 
 = \frac{i}{2}(\beta_{n-k,w} - \beta_{n-k+l, w}),
\end{equation}
where recall $(i\beta_{1,w}, \dots, i \beta_{n,w})$ are the Langlands parameters of $E_{P_{min}}$.

\begin{myprop}
\label{prop:normformula}
 We have
\begin{multline}
\label{eq:normformula}
N(F) = \frac{n}{2\pi} \sum_{\substack{\SL{n}{Z} \\ \text{ cuspidal spectrum}}} \intR |\mathcal{L}(1/2 +it, F \times \overline{u_j})|^2 dt
\\
+ 
\sum_{\substack{\SL{n_1}{Z} \\ \text{ cuspidal spectrum}}} 
\dots
\sum_{\substack{\SL{n_r}{Z} \\ \text{ cuspidal spectrum}}} c_{n_1, \dots, n_r} \int_{\mr^r} |\mathcal{L}(1/2 + it, F \times \overline{E}_{P_{n_1, \dots, n_r}}(\cdot, iv, \phi_j)|^2 dt dv_1^* \dots dv_{r-1}^*
\\
+ c\intR \dots \intR |\mathcal{L}(1/2 + it, F \times \overline{E}_{P_{\text{min}}}(\cdot, iw)|^2 dt d \beta_{1,w} \dots d \beta_{n-1,w},
\end{multline}
with the middle sum running over all partitions $n_1 + \dots + n_r = n$ where each $n_i \geq 1$, and $c_{n_1 \dots, n_r}$ and $c$ are certain positive constants; we take the convention that if $n_i = 1$ then we take the constant eigenfunction.  
\end{myprop}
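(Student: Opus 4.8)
The plan is to derive \eqref{eq:normformula} by applying the $GL_n$ spectral decomposition to the restricted form in the $z_2$-variable, and then evaluating the resulting period integrals by Rankin--Selberg unfolding together with Stade's Archimedean formula. Fix $y>0$ and regard $z_2 \mapsto F\begin{pmatrix} z_2 y & \\ & 1 \end{pmatrix}$ as an element of $L^2(\SL{n}{Z}\backslash\mathcal{H}^n)$; it is smooth and rapidly decaying in every cusp because $F$ is cuspidal on $\mathcal{H}^{n+1}$. Expand it according to the spectral decomposition of $L^2(\SL{n}{Z}\backslash\mathcal{H}^n)$ (see \cite{Goldfeld}, Chapter 10): the cuspidal part with orthonormal basis $\{u_j\}$; for each standard parabolic $P_{n_1,\dots,n_r}$ and each tuple $\phi_j$ of cusp forms, the cuspidal Eisenstein series $E_{P_{n_1,\dots,n_r}}(\cdot,iv,\phi_j)$, integrated over the continuous parameters $v_1^*,\dots,v_{r-1}^*$; and the minimal parabolic Eisenstein series $E_{P_{\text{min}}}(\cdot,iw)$, integrated over $\beta_{1,w},\dots,\beta_{n-1,w}$. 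Parseval's identity, followed by integration over $y\in\mr^+$ against $dy/y$, gives
\begin{equation*}
N(F) = \sum_j \int_0^\infty |P_j(y)|^2\,\frac{dy}{y} + (\text{Eisenstein analogues}),
\end{equation*}
where $P_j(y) = \int_{\SL{n}{Z}\backslash\mathcal{H}^n} F\begin{pmatrix} z_2 y & \\ & 1 \end{pmatrix}\overline{u_j(z_2)}\,d^*z_2$, and similarly for each Eisenstein component; the interchange of the $y$-integral with the spectral sum and integrals is justified by the exponential decay of the Archimedean factors recorded below.

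Next, evaluate $P_j(y)$. Inserting the Fourier--Whittaker expansions of $F$ on $\mathcal{H}^{n+1}$ and of $u_j$ on $\mathcal{H}^n$ and unfolding the $z_2$-integral as in the classical $GL_{n+1}\times GL_n$ Rankin--Selberg construction (see \cite{Goldfeld}, Chapter 12), the arithmetic sum collapses to the Dirichlet series \eqref{eq:RankinSelbergCuspidal}, while the Archimedean part becomes an integral over the diagonal torus of a product of the $GL_{n+1}$ and $GL_n$ Whittaker functions weighted by a power of the torus coordinates determined by $y$. Taking the Mellin transform in $y$ turns this into $L(s, F\times\overline{u_j})$ times an Archimedean integral of exactly the shape evaluated by Stade \cite{StadeAJM}, whose formula yields the ratio of products of Gamma functions which, together with the elementary $\pi$-powers and the factor $2\overline{B_j}(1)A_F(1)$, is precisely the $G_j(s)$ defined before the statement. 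Equivalently, $P_j(y)$ is the inverse Mellin transform along $\mathrm{Re}(s)=\half$ of $\mathcal{L}(s, F\times\overline{u_j})$ up to the normalizing constant that carries the factor $n$. (If $u_j$ is odd the pairing vanishes by the parity argument given after \eqref{eq:normdef}, consistent with $\mathcal{L}(s, F\times\overline{u_j})=0$ there.)

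Since $\mathcal{L}(s, F\times\overline{u_j})$ is holomorphic near the line $\mathrm{Re}(s)=\half$ (the Rankin--Selberg $L$-function is entire and the Gamma factors have poles only in $\mathrm{Re}(s)\le 0$) and decays rapidly on vertical lines by Stirling, Mellin--Plancherel in $y$ gives
\begin{equation*}
\int_0^\infty |P_j(y)|^2\,\frac{dy}{y} = \frac{n}{2\pi}\intR \big|\mathcal{L}(\half+it, F\times\overline{u_j})\big|^2\,dt,
\end{equation*}
and summing over $j$ produces the first line of \eqref{eq:normformula}. Running the same unfolding and Mellin--Plancherel step for the cuspidal Eisenstein series (using its non-degenerate Fourier coefficients and the Langlands parameters \eqref{eq:LanglandsEisenstein}, so that Stade's formula now outputs $G_{j,P}(s)$) and for the minimal Eisenstein series (outputting $G_{min}(s)$), and then incorporating the continuous spectral integrals over $v_1^*,\dots,v_{r-1}^*$ and over $\beta_{1,w},\dots,\beta_{n-1,w}$ dictated by the Plancherel measure, produces the remaining two lines of \eqref{eq:normformula}; the positive constants $c_{n_1,\dots,n_r}$ and $c$ record the normalization of the Eisenstein part of the spectral measure.

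I expect the main obstacle to be bookkeeping rather than any single hard estimate: carrying out the $GL_{n+1}\times GL_n$ unfolding for general $n$ with the correct normalization of Whittaker functions and Fourier coefficients, matching it against Stade's precise evaluation so as to recover the Gamma factors exactly as written (including the $\pi$-powers $\tfrac{n^2}{2}-\sum v_{l,k,j}-\sum v_{l,k}^*$), and tracking all constants through the full spectral decomposition over every standard parabolic, not merely the cuspidal and minimal ones; the combinatorics of the Eisenstein Fourier coefficients and of the Langlands parameters \eqref{eq:LanglandsEisenstein} is the delicate point. One must also justify the interchange of the $y$-integral with the spectral sum and integrals together with the contour shift to $\mathrm{Re}(s)=\half$; this follows from the exponential decay of the $G$-factors (Stirling) combined with standard convexity bounds for the Rankin--Selberg $L$-functions, exactly as in the $n=2$ treatment of \cite{LiYoung}.
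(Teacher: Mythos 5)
Your proposal follows essentially the same route as the paper: spectral decomposition of the restricted function $f_y(z_2)$, Parseval, Rankin--Selberg unfolding evaluated via Stade's Archimedean formula to produce $G_j(s)$ (and its Eisenstein analogues), and Mellin--Plancherel in $y$ yielding the factor $\frac{n}{2\pi}$. The only point the paper makes explicit that you gloss over is why the residual spectrum and Eisenstein series attached to degenerate data do not appear: one has $\langle f_{y},\phi\rangle=0$ whenever $\phi$ has only degenerate Fourier coefficients, which follows from the same unfolding computation you describe.
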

To prove Proposition \ref{prop:normformula}, we need the following lemmas.
\begin{mylemma}
 For fixed $y > 0$,
\begin{equation}
 f_y(z_2) := F\begin{pmatrix} z_2 y & \\ & 1 \end{pmatrix} \in L^2(\SL{n}{Z} \backslash \mathcal{H}^n),
\end{equation}
where
\begin{equation}
\label{eq:z2def}
 z_2 := 
\begin{pmatrix} 1 & x_{1,2} & \dots & x_{1,n} \\ & 1 & \dots & x_{2,n} \\  &  & \ddots &  \\  &  &  & 1 \end{pmatrix} Y, \quad Y=\begin{pmatrix} y_2 \dots y_n &  &  & &  \\ & y_2 \dots y_{n-1} &  & & \\  &  & \ddots & & \\  &  & & y_2 &   \\  &  & &  & 1 \end{pmatrix} 
\prod_{k=2}^{n} y_k^{-\frac{n+1-k}{n}}
\end{equation}
\end{mylemma}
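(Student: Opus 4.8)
The plan is to check the two assertions implicit in the statement: that $f_y$ descends to a well-defined function on $\SL{n}{Z}\backslash\mathcal{H}^n$, and that this function lies in $L^2$ for the measure $d^*z_2$. Since $F$ is real-analytic and $z_2\mapsto\left(\begin{smallmatrix}z_2 y&\\&1\end{smallmatrix}\right)$ is smooth, $f_y$ is automatically smooth and hence measurable, so only invariance and integrability require an argument.

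For the invariance I would use the block-diagonal embedding $\iota\colon\SL{n}{Z}\hookrightarrow\SL{n+1}{Z}$, $\iota(\gamma)=\left(\begin{smallmatrix}\gamma&\\&1\end{smallmatrix}\right)$. Given $\gamma\in\SL{n}{Z}$, the action on $\mathcal{H}^n$ is defined by writing the matrix product $\gamma z_2$ in the Iwasawa shape \eqref{eq:z2def} as $z_2'\kappa$ with $\kappa\in SO_n(\mr)$, so that $\gamma\cdot z_2=z_2'$. Since the scalar $y$ is central, $z_2' y=\gamma(z_2 y)\kappa^{-1}$, and therefore
\begin{equation*}
\begin{pmatrix}z_2' y&\\&1\end{pmatrix}=\begin{pmatrix}\gamma&\\&1\end{pmatrix}\begin{pmatrix}z_2 y&\\&1\end{pmatrix}\begin{pmatrix}\kappa^{-1}&\\&1\end{pmatrix}.
\end{equation*}
Applying the left $\SL{n+1}{Z}$-automorphy of $F$ together with its right $SO_{n+1}(\mr)$-invariance yields $f_y(\gamma\cdot z_2)=f_y(z_2)$; the same computation with $\gamma$ replaced by an element of $SO_n(\mr)$ shows that $f_y$ depends only on the class of $z_2$ in $\mathcal{H}^n=SL_n(\mr)/SO_n(\mr)$, so $f_y$ is a genuine function on $\SL{n}{Z}\backslash\mathcal{H}^n$. (Here $F$ is understood via its right $\mr^\times$-invariant extension to $\GL{n+1}{R}$, so that its value at $\left(\begin{smallmatrix}z_2 y&\\&1\end{smallmatrix}\right)$, a matrix of determinant $y^n$, is unambiguous.)

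For integrability I would invoke two standard facts: a Maass cusp form on $\SL{n+1}{Z}\backslash\mathcal{H}^{n+1}$ is rapidly decreasing on Siegel sets, hence bounded on a fundamental domain, so that $M:=\sup_{\mathcal{H}^{n+1}}|F|<\infty$ (cf.\ \cite{Goldfeld}); and $\SL{n}{Z}\backslash\mathcal{H}^n$ has finite volume with respect to $d^*z_2$. Combining these,
\begin{equation*}
\int_{\SL{n}{Z}\backslash\mathcal{H}^n}|f_y(z_2)|^2\,d^*z_2\le M^2\,\mathrm{vol}(\SL{n}{Z}\backslash\mathcal{H}^n)<\infty,
\end{equation*}
so $f_y\in L^2(\SL{n}{Z}\backslash\mathcal{H}^n)$.

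I do not expect a genuine obstacle here, since the statement is soft. The only point needing a little care is the bookkeeping of the Iwasawa renormalization of the $\SL{n}{Z}$-action --- which is absorbed into the right $SO_{n+1}(\mr)$-invariance of $F$ --- together with that of the central scalar $y$. The real difficulty of the paper lies not in this $L^2$-membership but in the evaluation and estimation of the norm \eqref{eq:normdef}; the crude bound above is of course very far from sharp.
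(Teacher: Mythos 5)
Your proof is correct and rests on the same soft fact as the paper's one-line argument, namely the rapid decay of the cusp form $F$ (the paper invokes decay as $y_k\to\infty$ directly, while you repackage it as a global sup bound combined with the finite volume of $\SL{n}{Z}\backslash\mathcal{H}^n$, which is equivalent in substance). The additional verification of $\SL{n}{Z}$- and $SO_n(\mr)$-invariance via the block embedding is correct and is simply left implicit in the paper.
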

\begin{proof}
 Since $F$ is a Maass form for $\SL{n+1}{Z}$, it has rapid decay when $y_k \rightarrow \infty$, $2 \leq k \leq n$.  
\end{proof}

\begin{mylemma}
\label{lemma:ujFinnerproduct}
 Let
\begin{equation}
 \mathcal{L}(s, F \times \overline{u_j}) := \int_0^{\infty} \int_{\SL{n}{Z} \backslash \mathcal{H}^n} \overline{u_j}(z_2)  F\begin{pmatrix} z_2 y & \\ & 1 \end{pmatrix} y^{n(s-\half)} d^*z_2 \frac{dy}{y}.
\end{equation}
Then $\mathcal{L}(s, F \times \overline{u_j}) = 0$ if $u_j$ is odd, while if $u_j$ is even, we have
\begin{equation}
 \mathcal{L}(s, F \times \overline{u_j}) = 2G_j(s) A_F(1) \overline{B_j}(1)L(s, F \times \overline{u_j}).
\end{equation}
\end{mylemma}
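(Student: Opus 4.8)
The plan is to carry out the classical Rankin--Selberg unfolding for $GL_{n+1}\times GL_n$ in the language of \cite{Goldfeld}, and then to invoke Stade's evaluation of the resulting Archimedean integral. First I would insert into the definition of $\mathcal{L}(s,F\times\overline{u_j})$ the Fourier--Whittaker expansion of $F$ along its maximal unipotent radical (the Piatetski-Shapiro--Shalika expansion; see \cite[Ch.~5]{Goldfeld}), which has the shape
\[
 F(g) \;=\; \sum_{\gamma\in U_n(\mz)\backslash SL_n(\mz)}\ \sum_{\substack{m_1,\dots,m_{n-1}\geq 1\\ m_n\neq 0}} c(m)\,A_F(m_1,\dots,m_n)\, W_F\!\left( M(m)\begin{pmatrix}\gamma&\\&1\end{pmatrix} g\right),
\]
where $U_n\subset SL_n$ is the upper-triangular unipotent, $SL_n\hookrightarrow SL_{n+1}$ as the top-left block, $M(m)$ is the appropriate diagonal matrix built from partial products of the $m_k$, $W_F$ is the Jacquet--Whittaker function of $F$, and $c(m)$ is an explicit normalizing factor. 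Since $\overline{u_j}$ is $\SL{n}{Z}$-invariant, the sum over $\gamma$ combines with the fundamental domain $\SL{n}{Z}\backslash\mathcal{H}^n$ to unfold the integral to one over $U_n(\mz)\backslash\mathcal{H}^n$; concretely, the coordinates $x_{i,j}$ with $1\leq i<j\leq n$ now run over $[0,1]$, while the $y_k$ and the extra variable $y$ still run over $(0,\infty)$.

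Next I would substitute the Fourier--Whittaker expansion of $\overline{u_j}(z_2)$ and integrate over $(x_{i,j})\in[0,1]^{\binom n2}$. By orthogonality of additive characters this removes the auxiliary $SL_{n-1}(\mz)$-coset sum carried by $u_j$ and identifies the Hecke indices $m_2,\dots,m_n$ of $u_j$ with the last $n-1$ indices of $F$ (with $m_1$ remaining a free summation variable), leaving a single sum over $(m_1,\dots,m_n)$ of $A_F(m_1,\dots,m_n)\,\overline{B_j(m_2,\dots,m_n)}$, times normalizing factors, paired against an Archimedean integral of $W_F$ with $\overline{W_j}$ over the positive torus, weighted by $y^{n(s-\half)}\,\tfrac{dy}{y}$. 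Writing $A_F(m)=A_F(1)\lambda_F(m)$ and $B_j(m)=B_j(1)\lambda_j(m)$ and collecting normalizations, the arithmetic part becomes exactly $A_F(1)\overline{B_j}(1)\,L(s,F\times\overline{u_j})$ with $L(s,F\times\overline{u_j})$ as in \eqref{eq:RankinSelbergCuspidal}. All interchanges of summation and integration are legitimate for $\text{Re}(s)$ sufficiently large, by the rapid decay of the Whittaker functions together with the absolute convergence of the Rankin--Selberg series; the identity then extends by meromorphic continuation.

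The parity dichotomy drops out of the reflection $m_n\mapsto -m_n$: because $F$ is even, the terms with $m_n>0$ and $m_n<0$ contribute equally when $u_j$ is even, producing the factor $2$, and cancel when $u_j$ is odd, giving $\mathcal{L}(s,F\times\overline{u_j})=0$. It then remains only to identify the Archimedean integral --- the integral of $W_F$ against $\overline{W_j}$ with the $y^{n(s-\half)}\,\tfrac{dy}{y}$ weight over the positive torus of $GL_{n+1}$ --- which is precisely the Archimedean Rankin--Selberg integral for $GL_{n+1}\times GL_n$; its evaluation as a ratio of products of gamma functions is exactly Stade's theorem \cite{StadeAJM}. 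Combining this with the factor $c(m)$ and with the normalizations of the Fourier coefficients of $F$ and $u_j$ yields the function $G_j(s)$ defined above, possibly after a change of variables or an auxiliary analytic continuation to bring the integral into Stade's normalization.

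I expect the principal difficulty to be bookkeeping rather than conceptual: one must reconcile the several normalization conventions in play --- Goldfeld's Fourier coefficients, the completed versus uncompleted Jacquet--Whittaker functions, and Stade's normalization --- so that the spurious powers of $\pi$ and the products of gamma functions assemble into exactly the stated $G_j(s)$, and one must verify that the hypotheses of Stade's theorem genuinely apply to the integral produced by the unfolding. The unfolding itself and the collapse of the $x_{i,j}$-integral are routine once the correct $SL_{n+1}$ form of the Piatetski-Shapiro--Shalika expansion of $F$ is in hand.
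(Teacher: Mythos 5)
Your proposal follows essentially the same route as the paper: unfold via the Piatetski-Shapiro--Shalika expansion of $F$ against the $\SL{n}{Z}$-invariant $\overline{u_j}$, extract the Fourier coefficient $\overline{B_j}(m_2,\dots,m_n)$ by integrating the additive character over the $x_{i,j}$, obtain the factor $2$ (resp.\ vanishing) from the $m_n\mapsto -m_n$ symmetry according to the parity of $u_j$, and evaluate the remaining Archimedean torus integral by Stade's theorem after renormalizing the Whittaker functions. The only difference is that the paper carries out explicitly the bookkeeping you defer (the changes of variables $y\to y_1(y_2^{n-1}\cdots y_n)^{1/n}$ and $y_k\to y_k/|m_k|$, and the passage from $W_J$ to Stade's $W_J^*$ producing the powers of $\pi$ and the gamma-factor denominators in $G_j(s)$), so your plan is correct in approach and complete up to that routine verification.
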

\begin{proof}
 We have the Fourier expansion
\begin{multline}
 F(z) = \sum_{\gamma \in U_n(\mathbb{Z}) \backslash \SL{n}{Z}} \sum_{m_1 \geq 1} \dots \sum_{m_{n-1} \geq 1} \sum_{m_n \neq 0} \frac{A_F(m_1, \dots, m_n)}{\prod_{k=1}^n |m_k|^{\frac{k(n+1-k)}{2}}} 
\\
W_J\left( \begin{pmatrix} m_{1} \dots |m_n| &  &  &  \\ & \ddots &  &  \\  &  & m_1 &  \\  &  &  & 1 \end{pmatrix} \cdot \begin{pmatrix} \gamma & \\ & 1 \end{pmatrix} z, i\alpha, \psi_{1,\dots, 1, \frac{m_n}{|m_n|}} \right).
\end{multline}
Here $i\alpha = (i\alpha_1, \dots, i\alpha_n)$ are the Langlands parameters of $F$.
Then writing $M$ as shorthand for the diagonal matrix above, we have by unfolding
\begin{multline}
\label{eq:ujFinnerproduct}
 \int_{\SL{n}{Z} \backslash \mathcal{H}^n} \overline{u_j}(z_2)  F\begin{pmatrix} z_2 y & \\ & 1 \end{pmatrix}  d^*z_2
=
\sum_{m_1 \geq 1} \dots \sum_{m_{n-1} \geq 1} \sum_{m_n \neq 0} \frac{A_F(m_1, \dots, m_n)}{\prod_{k=1}^n |m_k|^{\frac{k(n+1-k)}{2}}} 
\\
\times \int_{U_n(\mathbb{Z}) \backslash \mathcal{H}^n} \overline{u_j}(z_2) W_J\Big(M \begin{pmatrix} z_2 y & \\ & 1 \end{pmatrix} , \psi_{1,\dots, 1, \frac{m_n}{|m_n|}} \Big) d^* z_2.
\end{multline}
By \cite{Goldfeld} p.132, and with $\psi_M(x) = e(m_2 x_{n-1, n} + m_3 x_{n-2, n-1} + \dots + m_n x_{1,2})$, we have
\begin{align}
 W_J\Big(M \begin{pmatrix} z_2 y & \\ & 1 \end{pmatrix} z, \psi_{1,\dots, 1, \frac{m_n}{|m_n|}} \Big) &= \psi_M(x)  W_J\Big(M \begin{pmatrix} Y y & \\ & 1 \end{pmatrix}, \psi_{1,\dots, 1, \frac{m_n}{|m_n|}} \Big)
\\
&= \psi_M(x)  W_J\Big(M \begin{pmatrix} Y y & \\ & 1 \end{pmatrix}, \psi_{1,\dots,  1} \Big).
\end{align}
Also note
\begin{multline}
 \int_0^1 \dots \int_0^1 \overline{u_j}(z_2) \psi_M(x) \prod_{1 \leq i < j \leq n} d x_{i,j} 
\\
= \frac{\overline{B_j}(m_2, \dots, m_n)}{\prod_{k=2}^n |m_k|^{\frac{(k-1)(n+1-k)}{2}}} \overline{W}_J \left( \begin{pmatrix} m_{2} \dots |m_n| y_2 \dots y_n &  &  &  \\ & \ddots &  &  \\  &  & m_2 y_2 &  \\  &  &  & 1 \end{pmatrix} , i\beta \right),
\end{multline}
where $i\beta = (i\beta_1, \dots, i\beta_n)$ are the Langlands parameters of $u_j$, and where here and in the following we do not write $\psi_{1,\dots,1}$ in the definition of the Jacquet Whittaker function.
Then we have
\begin{multline}
\label{eq:Lmiddlecalculation}
 \mathcal{L}(s, F \times \overline{u_j}) = 
\sum_{m_1 \geq 1} \dots \sum_{m_{n-1} \geq 1} \sum_{m_n \neq 0} \frac{A_F(m_1, \dots, m_n) \overline{B_j}(m_2, \dots, m_n)}{m_1^{n/2} \prod_{k=2}^n |m_k|^{\frac{(n+1-k)(2k-1)}{2}} }
\\
\int_0^{\infty} \dots \int_0^{\infty} W_J\Big(M \begin{pmatrix} Y y & \\ & 1 \end{pmatrix}, i\alpha \Big) 
\overline{W}_J \left( \begin{pmatrix} m_{2} \dots |m_n| y_2 \dots y_n &  &  &  \\ & \ddots &  &  \\  &  & m_2 y_2 &  \\  &  &  & 1 \end{pmatrix} ,i\beta \right)
\\
y_1^{n (s-\half)} \frac{dy_1}{y_1} \prod_{k=2}^n y_k^{-(k-1)(n+1-k)} \frac{dy_k}{y_k}.
\end{multline}
The inner integrals above simplify as
\begin{multline}
 \int_0^{\infty} \dots \int_0^{\infty} W_J\left(M \begin{pmatrix} y_1 y_2 \dots y_n (y_2^{n-1} \dots y_n)^{-1/n} &  &  &  \\ & \ddots &  &  \\  &  & y_1 (y_2^{n-1} \dots y_n)^{-1/n} &  \\  &  &  & 1 \end{pmatrix}, i\alpha  \right)
\\ 
\overline{W}_J \left( \begin{pmatrix} m_{2} \dots |m_n| y_2 \dots y_n &  &  &  \\ & \ddots &  &  \\  &  & m_2 y_2 &  \\  &  &  & 1 \end{pmatrix} ,i\beta \right)
y_1^{n (s-\half)} \frac{dy_1}{y_1} \prod_{k=2}^n y_k^{-(k-1)(n+1-k)} \frac{dy_k}{y_k}.
\end{multline}
Changing variables $y \rightarrow y_1 (y_2^{n-1} \dots y_n)^{1/n}$ gives that this is
\begin{multline}
 \int_0^{\infty} \dots \int_0^{\infty} (y_1^n y_2^{n-1} \dots y_n)^{(s-\half)} W_J \left(M \begin{pmatrix} y_1 y_2 \dots y_n  &  &  &  \\ & \ddots &  &  \\  &  & y_1 &  \\  &  &  & 1 \end{pmatrix}, i\alpha  \right) 
\\
\overline{W}_J \left( \begin{pmatrix} m_{2} \dots |m_n| y_2 \dots y_n &  &  &  \\ & \ddots &  &  \\  &  & m_2 y_2 &  \\  &  &  & 1 \end{pmatrix}, i\beta  \right)
 \frac{dy_1}{y_1} \prod_{k=2}^n y_k^{-(k-1)(n+1-k)} \frac{dy_k}{y_k}.
\end{multline}
Next we change variables with $y_j \rightarrow y_j/|m_j|$, getting
\begin{multline}
 \int_0^{\infty} \dots \int_0^{\infty} W_J\left(\begin{pmatrix} y_1 y_2 \dots y_n  &  &  &  \\ & \ddots &  &  \\  &  & y_1 &  \\  &  &  & 1 \end{pmatrix} , i\alpha \right)
\overline{W}_J \left( \begin{pmatrix} y_2 \dots y_n &  &  &  \\ & \ddots &  &  \\  &  &  y_2 &  \\  &  &  & 1 \end{pmatrix}, i\beta  \right)
\\
\Big(\prod_{k=1}^n \frac{1}{|m_k|^{(n+1-k)s}} \Big) \Big(\prod_{k=1}^n |m_k|^{\frac{(n+1-k)(2k-1)}{2}} \Big)
(y_1^n y_2^{n-1} \dots y_n)^{(s-\half)} \frac{dy_1}{y_1} \prod_{k=2}^n y_k^{-(k-1)(n+1-k)} \frac{dy_k}{y_k}.
\end{multline}
Inserting this into \eqref{eq:Lmiddlecalculation}, we obtain 
\begin{equation}
 \mathcal{L}(s, F \times \overline{u_j}) =  2\overline{B_j}(1) A_F(1) L(s, F \times \overline{u_j}) G_j(s),
\end{equation}
provided $u_j$ is even,
where
\begin{multline}
 G_j(s) = \int_0^{\infty} \dots \int_0^{\infty} W_J \left(\begin{pmatrix} y_1 y_2 \dots y_n  &  &  &  \\ & \ddots &  &  \\  &  & y_1 &  \\  &  &  & 1 \end{pmatrix}, i\alpha  \right)
\\
\overline{W}_J \left( \begin{pmatrix} y_2 \dots y_n &  &  &  \\ & \ddots &  &  \\  &  &  y_2 &  \\  &  &  & 1 \end{pmatrix}, i\beta  \right)
(y_1^n y_2^{n-1} \dots y_n)^{(s-\half)}  \prod_{k=1}^n y_k^{-(k-1)(n+1-k)} \frac{dy_k}{y_k}.
\end{multline}
Let
\begin{equation}
\label{eq:WhittakerScaledDef} 
W_J^*(y, i\beta) = W_J(y, i\beta) \prod_{j=1}^{n-1} \prod_{j \leq k \leq n-1} \pi^{-\half - iv_{j,k}} \Gamma(\thalf + iv_{j,k}),
\end{equation}
where
\begin{equation}
 iv_{j,k} = \frac{i}{2} \sum_{l=0}^{j-1} (\beta_{n-k+l} - \beta_{n-k+l+1}) = \frac{i}{2} (\beta_{n-k} - \beta_{n-k+j}), 
\end{equation}
Here $W_J^*(y, i\nu)$ 
is the Whittaker function normalized by Stade \cite{StadeAJM}.  We quote a formula of Stade \cite[Theorem 3.4]{StadeAJM}:
\begin{multline}
\int_0^{\infty} \dots \int_0^{\infty} W_J^* \left(\begin{pmatrix} y_1 y_2 \dots y_n  &  &  &  \\ & \ddots &  &  \\  &  & y_1 &  \\  &  &  & 1 \end{pmatrix}, i\alpha  \right)
\overline{W}_J^* \left( \begin{pmatrix} y_2 \dots y_n &  &  &  \\ & \ddots &  &  \\  &  &  y_2 &  \\  &  &  & 1 \end{pmatrix}, i\beta  \right)
\\
\times \prod_{k=1}^n (\pi y_k)^{(n+1-k)s} 2y_k^{-(n+1-k)(k-\half)} \frac{dy_k}{y_k}
= \prod_{l=1}^n \prod_{k=1}^{n+1} \Gamma(\frac{s+ i\beta_{l,j} - i\alpha_k}{2}).
\end{multline}
From this, we deduce
\begin{multline}
 G_j^*(s) = \int_0^{\infty} \dots \int_0^{\infty} W_J^* \left(\begin{pmatrix} y_1 y_2 \dots y_n  &  &  &  \\ & \ddots &  &  \\  &  & y_1 &  \\  &  &  & 1 \end{pmatrix}, i\alpha \right)
\\
\times \overline{W}_J^* \left( \begin{pmatrix} y_2 \dots y_n &  &  &  \\ & \ddots &  &  \\  &  &  y_2 &  \\  &  &  & 1 \end{pmatrix}, i\beta  \right)
 \prod_{k=1}^n y_k^{(n+1-k)(s+\half-k)} \frac{dy_k}{y_k}
\\
= 2^{-n} \pi^{-\frac{n(n+1)}{2} s} \prod_{l=1}^n \prod_{k=1}^{n+1} \Gamma(\frac{s+ i\beta_{l,j} - i\alpha_k}{2}),
\end{multline}
and hence
\begin{multline}
 G_j(s) = G_j^*(s) \prod_{j=1}^{n-1} \prod_{j \leq k \leq n-1} \pi^{\half - v_{j,k}} \Big(\prod_{1 \leq k < l \leq n} \Gamma(\frac{1-i\beta_k+i\beta_l}{2}) \Big)^{-1} 
\\
\prod_{j=1}^{n} \prod_{j \leq k \leq n} \pi^{\half + v_{j,k}^*} \Big(\prod_{1 \leq k < l \leq n+1} \Gamma(\frac{1+i\alpha_k-i\alpha_l}{2}) \Big)^{-1},
\end{multline}
which becomes
\begin{multline}
 G_j^*(s) \pi^{\frac{n^2}{2} - \sum_{j=1}^{n-1} \sum_{j \leq k \leq n-1} v_{j,k} + \sum_{j=1}^n \sum_{j \leq k \leq n} v_{j,k}^*} 
\\
\prod_{1 \leq k < l \leq n} \Gamma(\frac{1-i\beta_k+i\beta_l}{2})^{-1} 
\prod_{1 \leq k < l \leq n+1} \Gamma(\frac{1+i\alpha_k-i\alpha_l}{2})^{-1}. \qedhere
\end{multline}
\end{proof}

\begin{proof}[Proof of Proposition \ref{prop:normformula}]
 By the spectral decomposition of $\SL{n}{Z}$ \cite{MW},
\begin{equation}
 L^2(\SL{n}{Z} \backslash \mathcal{H}^n) = C_{\text{cusp}}(\SL{n}{Z} \backslash \mathcal{H}^n) \oplus (\text{Residual spectrum}) \oplus (\text{Continuous spectrum}).
\end{equation}
From the proof of Lemma \ref{lemma:ujFinnerproduct}, one sees that $\langle f_{y_1}, \phi \rangle = 0$ if $\phi$ has only degenerate Fourier coefficients.  As a result, the residual spectrum does not enter, and only the cuspidal Eisenstein series contribute (note there are more Eisenstein series in the continuous spectrum in general), i.e.,
\begin{multline}
 f_{y_1}(z_2) = 
\sum_{\substack{
\SL{n}{Z} \\ \text{ cuspidal spectrum}}} \langle f_{y_1}, u_j \rangle u_j(z_2) 
\\
+ 
\sum_{k=1}^{r}
\sum_{\substack{\SL{n_k}{Z} \\ \text{ cuspidal spectrum}}}  
c_{n_1,\dots,n_r} \int_{\mr^r} \langle f_{y_1}, E_{P_{n_1, \dots, n_r}}(\cdot, iv, \phi_j) \rangle E_{P_{n_1, \dots, n_r}}(z_2, iv, \phi_j)   dv_1^* \dots dv_{r-1}^*
\\
+ c \intR \dots \intR \langle f_{y_1}, E_{P_{\text{min}}}(\cdot, iw) \rangle  E_{P_{\text{min}}}(z_2, iw)d \beta_{1,w} \dots d \beta_{n-1, w},
\end{multline}
where $n_1 + \dots + n_r = n$ with $n_i \geq 1$, and $(n_1, \dots, n_r)$ runs through all such partitions of $n$.
By Parseval,
\begin{multline}
 \int_{\SL{n}{Z} \backslash \mathcal{H}^n} |f_{y_1}(z_2)|^2 d^* z_2 = 
\sum_{\substack{
\SL{n}{Z}  \\ \text{ cuspidal spectrum}}} |\langle f_{y_1}, u_j \rangle|^2
\\
+ 
\sum_{k=1}^{r}
\sum_{\substack{\SL{n_k}{Z} \\ \text{ cuspidal spectrum}}} 
c_{n_1, \dots, n_r} \int_{\mr^r} |\langle f_{y_1}, E_{P_{n_1, \dots, n_r}}(\cdot, iv, \phi_j) \rangle|^2   dv_1^* \dots dv_{r-1}^*
\\
+ c \intR \dots \intR |\langle f_{y_1}, E_{P_{\text{min}}}(\cdot, iw) \rangle |^2 d\beta_{1,w} \dots d\beta_{n-1,w}.
\end{multline}
The Plancherel formula says
\begin{equation}
 \int_0^{\infty} |h(y)|^2 \frac{dy}{y} = \frac{n}{2\pi} \intR |\widetilde{h}(n i t)|^2 dt, \qquad \widetilde{h}(ni t) = \int_0^{\infty} h(y) y^{ni t} \frac{dy}{y}.
\end{equation}
Hence
\begin{multline}
 \int_{-\infty}^{\infty} |\langle f_{y_1}, u_j \rangle|^2 \frac{dy_1}{y_1} =\frac{n}{2\pi} \int_{-\infty}^{\infty} \Big| \int_0^{\infty} \int_{\SL{n}{Z} \backslash \mathcal{H}^n} f_{y_1}(z_2) \overline{u_j}(z_2) d^* z_2 y_1^{n i t} \frac{dy_1}{y_1} \Big|^2 dt
\\
= \frac{n}{2\pi} \intR |\mathcal{L}(1/2 + it, F \times \overline{u_j})|^2 dt.
\end{multline}
The other terms with the Eisenstein series arrive in a similar way.
\end{proof}

\section{Rankin-Selberg calculations}
In this section we relate the $L^2$ norm of a Maass form $F$ to the Rankin-Selberg $L$-function $L(s, F \times \overline{F})$:
\begin{myprop}
\label{prop:RankinSelbergL2formula}
Suppose that $F$ is a tempered Hecke-Maass form for $\SL{n+1}{Z}$.  Then for some absolute constant $c(n) > 0$, we have
\begin{equation}
 \langle F, F \rangle = c(n) |A_F(1)|^2 \text{Res}_{s=1} L(s, F \times \overline{F}).
\end{equation}
\end{myprop}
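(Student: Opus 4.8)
\medskip

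The natural approach is the Rankin--Selberg unfolding method applied to the convolution $F \times \overline{F}$ on $GL_{n+1} \times GL_{n+1}$. Let $E(z,s)$ be the Eisenstein series on $\SL{n+1}{Z} \backslash \mathcal{H}^{n+1}$ attached to the maximal parabolic $P_{n,1}$ of $\GL{n+1}{R}$, normalized so that it appears in the standard integral representation of $L(s, F \times \overline{F})$; it is classical (see \cite{Goldfeld}, \cite{MW}) that $E(z,s)$ continues meromorphically in $s$ with a simple pole at $s=1$ whose residue is a positive constant $\kappa_n$, independent of $z$. I would start from
\[
 I(s) = \int_{\SL{n+1}{Z} \backslash \mathcal{H}^{n+1}} |F(z)|^2 \, E(z,s) \, dz,
\]
which converges absolutely for $\mathrm{Re}(s)$ large.

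Unfolding $E(z,s)$ into its defining coset sum and inserting the Fourier--Whittaker expansion of one copy of $F$ collapses the integral --- just as in the proof of Lemma \ref{lemma:ujFinnerproduct} and as in the classical $GL_N \times GL_N$ Rankin--Selberg theory in \cite{Goldfeld} --- and produces the Dirichlet series defining $L(s, F \times \overline{F})$ (compare \eqref{eq:RankinSelbergCuspidal}). Since the genuine Fourier coefficients satisfy $A_F(m_1, \dots, m_n) = A_F(1) \lambda_F(m_1, \dots, m_n)$ while $L(s, F \times \overline{F})$ is built from the Hecke eigenvalues $\lambda_F$, the unfolding gives
\[
 I(s) = |A_F(1)|^2 \, L(s, F \times \overline{F}) \, \Psi_\infty(s, \alpha),
\]
where $\Psi_\infty(s, \alpha)$ is an archimedean integral of $|W_J(\cdot, i\alpha)|^2$ against an appropriate power of the diagonal coordinates.

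The technical heart is to evaluate $\Psi_\infty(s, \alpha)$ near $s=1$ and to verify that it does not depend on $\alpha$ there. Passing from $W_J$ to the Stade-normalized Whittaker function $W_J^*$ via \eqref{eq:WhittakerScaledDef} and invoking the archimedean Rankin--Selberg computations of Stade (cf. \cite{StadeAJM}) for $GL_{n+1} \times GL_{n+1}$, the normalized integral equals $\prod_{k,l} \Gamma(\tfrac{s + i\alpha_k - i\alpha_l}{2})$ up to an elementary factor in $s$; at $s=1$ the diagonal terms contribute $\Gamma(\tfrac12)^{n+1}$ and the off-diagonal terms contribute $\prod_{k \ne l} \Gamma(\tfrac{1 + i(\alpha_k - \alpha_l)}{2}) = \prod_{k<l} \big|\Gamma(\tfrac{1 + i(\alpha_k - \alpha_l)}{2})\big|^2 > 0$, using temperedness of $F$. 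Undoing the two Whittaker normalizations reintroduces precisely the reciprocal of this off-diagonal gamma product, so these factors cancel, and the accompanying powers of $\pi$ collapse because the multiset $\{\alpha_k - \alpha_l : k \ne l\}$ is symmetric. Hence $\Psi_\infty(1, \alpha)$ is a positive constant $c_1(n)$ depending only on $n$.

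Finally I would compare residues at $s = 1$. On the left, integrability of $|F|^2$ gives $\mathrm{Res}_{s=1} I(s) = \kappa_n \langle F, F \rangle$. On the right, $\Psi_\infty$ is holomorphic and nonzero at $s = 1$ and $|A_F(1)|^2 \ne 0$, so the pole is carried by $L(s, F \times \overline{F})$ and the residue of the right-hand side is $|A_F(1)|^2 \, c_1(n) \, \mathrm{Res}_{s=1} L(s, F \times \overline{F})$. Equating and setting $c(n) = c_1(n)/\kappa_n > 0$ yields the proposition. I expect the main obstacle to be the archimedean step of the previous paragraph: obtaining a closed form for $\Psi_\infty$ from Stade's identities and then carrying out the bookkeeping that makes $\Psi_\infty(1, \alpha)$ independent of the Langlands parameters. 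A secondary point requiring care is justifying the meromorphic continuation of $I(s)$ and confirming that the pole at $s = 1$ lies on $L(s, F \times \overline{F})$ (equivalently, that $\langle F, F \rangle \ne 0$).
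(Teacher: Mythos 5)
Your proposal is correct and follows essentially the same route as the paper: the paper likewise unfolds $\langle F\overline{F}, E_P(\cdot,\overline{s})\rangle$ against the maximal parabolic Eisenstein series (quoting the calculation on p.~369 of \cite{Goldfeld}), evaluates the archimedean Whittaker integral via Stade's $GL\times GL$ formula \cite{StadeIsrael}, and takes residues at $s=1$, observing that the $\alpha$-dependent gamma factors and powers of $\pi$ cancel against the Whittaker normalization so the constant depends only on $n$. The only cosmetic difference is the citation (the relevant closed form is Stade's $GL(n)\times GL(n)$ identity in \cite{StadeIsrael} rather than \cite{StadeAJM}).
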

Xiannan Li \cite{XiannanLi} has shown that $\text{Res}_{s=1} L(s, F \times \overline{F}) \ll \lambda(F)^{\varepsilon}$ which shows $|A_F(1)|^2 \gg \lambda(F)^{-\varepsilon}$ provided $F$ is $L^2$-normalized.  The lower bound $\text{Res}_{s=1} L(s, F \times \overline{F}) \gg \lambda(F)^{-\varepsilon}$ is not known in general but would follow from the generalized Riemann hypothesis.

\begin{proof}
 We generalize the calculation by starting with $F$ and $G$ tempered Hecke-Maass forms on $\SL{n+1}{Z}$ with respective Langlands parameters $i\alpha = (i \alpha_1, \dots, i \alpha_{n+1})$, and $i\beta = (i\beta_1, \dots, i \beta_{n+1})$.  Recall the definition \eqref{eq:WhittakerScaledDef}.  We also quote the following formula of Stade \cite{StadeIsrael}.
\begin{multline}
 \Gamma(\frac{(n+1)s}{2}) \int_0^{\infty} \dots \int_0^{\infty} W_J^* (y, i\alpha) 
\overline{W}_J^*(y, i\beta) 
\prod_{j=1}^{n} (\pi y_j)^{(n+1-j)s} (2y_j)^{-(n+1-j)j} \frac{dy_j}{y_j}
\\
= \prod_{j=1}^{n+1} \prod_{k=1}^{n} \Gamma(\frac{s + i\alpha_j - i \beta_k}{2}),
\end{multline}
where $y = \text{diag}(y_1 \dots y_n, y_1 \dots y_{n-1}, \dots, y_1, 1)$, and in the calculation we have used the fact that the $\beta_j$ and $\alpha_k$ are real.
By a calculation on p.369 of \cite{Goldfeld}, we have if $F \overline{G}$ is even (that is, $F$ and $G$ are both even or both odd), then
\begin{equation}
\label{eq:FGinnerproduct}
 \zeta((n+1)s) \langle F \overline{G}, E_P(\cdot, \overline{s}) \rangle = 2 A_F(1) \overline{A_G(1)} L(s, F \times \overline{G}) G_{i\alpha, i\beta}(s),
\end{equation}
where
\begin{equation} G_{i\alpha, i\beta}(s) = \int_0^{\infty} \dots \int_0^{\infty} W_J(y, i\alpha) \overline{W}_J(y, i \beta) \det(y)^s d^*y,
\end{equation}
where $d^*y = \prod_{k=1}^n y_k^{-k(n+1-k)} \frac{dy_k}{y_k}$ and $\det(y) = \prod_{j=1}^n y_j^{n+1-j}$.  Thus
\begin{equation}
 \det(y)^s d^*y= \prod_{j=1}^n y_j^{(n+1-j)s} y_j^{-j(n+1-j)} \frac{dy_j}{y_j}.
\end{equation}
By Stade's formula, we have with $a=-\frac{n(n+1)}{2}$ and $b=\frac{n(n+1)(n+2)}{6}$
\begin{multline}
\label{eq:StadeGLnGLn}
 G_{i\alpha, i\beta}(s) =  \frac{\pi^{as} 2^b}{\Gamma(\frac{(n+1)s}{2})}
\Big[
\prod_{j=1}^n \prod_{j \leq k \leq n} \pi^{-\half - \half(i\alpha_{n+1-k}-i\alpha_{n+1-k+j})} \Gamma(\frac{1 + i\alpha_{n+1-k} - i\alpha_{n+1-k+j}}{2}) \Big]^{-1}
\\
\Big[\prod_{j=1}^n \prod_{j \leq k \leq n} \pi^{-\half + \half(i\beta_{n+1-k}-i\beta_{n+1-k+j})} \overline{\Gamma}(\frac{1 + i\beta_{n+1-k} - i\beta_{n+1-k+j}}{2})
\Big]^{-1}
\prod_{j=1}^{n+1} \prod_{k=1}^{n+1} \Gamma(\frac{s + i\alpha_j - i \beta_k}{2}).
\end{multline}
By \cite{Goldfeld} Proposition 10.7.5, 
$E_P^*(z,s) = \pi^{-(n+1)s/2} \Gamma((n+1)s/2) \zeta((n+1)s) E_P(z,s)$ has a simple pole at $s=1$ with say residue $R$.  Taking $F=G$ and the residue at $s=1$ on both sides of \eqref{eq:FGinnerproduct}, we have
\begin{multline}
 R \frac{\pi^{\frac{n+1}{2}}}{ \Gamma(\frac{n+1}{2})} \langle F, F \rangle = |A_F(1)|^2 \text{Res}_{s=1} L(s, F \times \overline{F})  \frac{\pi^a 2^b}{\Gamma(\frac{n+1}{2})} 
\prod_{j=1}^{n+1} \prod_{k=1}^{n+1} \Gamma(\frac{1 + i\alpha_j - i \alpha_k}{2})
\\
\times \Big[ \prod_{j=1}^n \prod_{j \leq k \leq n} \pi^{-\half - \half(i\alpha_{n+1-k}-i\alpha_{n+1-k+j})} \Gamma(\frac{1 + i\alpha_{n+1-k} - i\alpha_{n+1-k+j}}{2}) \Big]^{-1}
\\
\times \Big[\prod_{j=1}^n \prod_{j \leq k \leq n} \pi^{-\half + \half(i\alpha_{n+1-k}-i\alpha_{n+1-k+j})} \overline{\Gamma}(\frac{1 + i\alpha_{n+1-k} - i\alpha_{n+1-k+j}}{2})
\Big]^{-1}.
\end{multline}
Observe that the gamma factors involving $\alpha$ are cancelled, as are the powers of $\pi$ involving $\alpha$, and the proof is complete.
\end{proof}

\section{Local Weyl Law}
\label{section:Weyl}
The formula for $N(F)$ given by Proposition \ref{prop:normformula} involves a spectral sum of $\SL{n}{Z}$ Maass forms and as such we need some control on this spectral sum.  This topic has seen some major recent advances but the precise results required here do not yet exist. 

Suppose that $(i\beta_1, \dots, i\beta_{n})$ are the Langlands parameters of a Maass form on $GL_n$, with $\beta_1 + \dots + \beta_n = 0$.  We shall suppose that all forms are tempered so that $\beta_l \in  \mathbb{R}$, for all $1 \leq l \leq n$.  For a vector $\lambda = (\lambda_1, \dots, \lambda_n)$ with $\lambda_1 + \dots + \lambda_n = 0$, each $\lambda_l \in  \mathbb{R}$, set
\begin{equation}
\label{eq:mudefinition}
 \mu(\lambda) = \prod_{1\leq k < l \leq n} (1 + |\lambda_k - \lambda_l|).
\end{equation}
Here our $\mu(\lambda)$ is $\tilde{\beta}(\lambda)$ as given by (3.4) in \cite{LapidMuller}.   Then according to Proposition 4.5 of \cite{LapidMuller}, we have\footnote{Technically, Lapid and M\"{u}ller require a congruence subgroup of $\SL{n}{Z}$ so strictly speaking this result is not unconditional, but this is apparently a minor technical issue.}
\begin{equation}
 \#\{\beta : \| \beta - \lambda \| \leq 1 \} \ll \mu(\lambda),
\end{equation}
where the count is over Maass forms with Langlands parameter $i\beta$.  The corresponding lower bound is apparently not known in general.  However, the lower bound is known on average in the sense that the number of Maass forms with Langlands parameter $\beta$ lying in a region of the form $t \Omega$ is
\begin{equation}
 \asymp \int_{t \Omega} \mu(\lambda) d \lambda,
\end{equation}
as $t \rightarrow \infty$.  In fact, \cite{LapidMuller} find an asymptotic count with a power saving.  

For our applications here, we find it most desirable to have the following  estimates.  Let $\lambda$ be given.  Then for some fixed absolute constant $K \geq 1$,
\begin{equation}
\label{eq:WLWLlower}
 \mu(\lambda) \ll \sideset{}{^+}\sum_{\beta: \| \beta - \lambda \| \leq K} |B_{\beta}(1)|^2,
\end{equation}
where $B_{\beta}(1)$ is the first Fourier coefficient of the Hecke-Maass form associated to $\beta$, and the $+$ denotes the sum is restricted to even forms.  We also require an analogous upper bound with the continuous spectrum included, namely
\begin{multline}
 \label{eq:WLWLupper}
\sum_{\beta: \| \beta - \lambda \| \leq 1} |B_{\beta}(1)|^2 + 
\sum_{\phi_{j_1}} \dots \sum_{\phi_{j_r}} c_{n_1, \dots, n_r} \int_{\| \beta_{j_1, \dots, j_r} - \lambda \| \leq 1} |B_{P_{n_1, \dots, n_r}}(1)|^2 dv_1^* \dots dv_{r-1}^*
\\
+ c \int_{\| \beta_w - \lambda \| \leq 1} |B_{P_{min}}(1)|^2 d_{\beta_{1,w}} \dots d \beta_{n-1, w} \ll \mu(\lambda),
\end{multline}
where $i\beta_{j_1, \dots j_r}$ is the vector of Langlands parameters of $E_{P_{n_1, \dots, n_r}}$ and $i \beta_w$ is the vector of Langlands parameters of $E_{min}$, all the other notations being defined before and in Proposition \ref{prop:normformula}.
We call \eqref{eq:WLWLlower}-\eqref{eq:WLWLupper} the {\em weighted local Weyl law} (though technically it is not an asymptotic).  

Blomer has recently shown \eqref{eq:WLWLlower}-\eqref{eq:WLWLupper} for $n=3$ \cite{Blomer}. 
Blomer's proof relies on the $GL_3$ Kuznetsov formula as opposed to the Arthur-Selberg trace formula.  The weighting inherent in the Kuznetsov formula (with the first Fourier coefficient as weights) is much more natural in our application here.

\section{Archimedean development of the norm formula}
\label{section:Archimedean}
In this section we scrutinize the Archimedean part of the norm formula.  We are eventually led to a combinatorial-type problem of integrating a certain function over a polytope.

According to Proposition \ref{prop:normformula}, write $N(F) = N_d(F) + N_{\text{max}}(F) + N_{\text{min}}(F)$.  Then $N(F) \geq N_d(F)$ so for the purpose of the lower bound in Theorem \ref{thm:normLB} we may restrict our attention to $N_d(F)$.  Our methods for obtaining an upper bound on $N_d(F)$ turn out to apply to the Eisenstein series also.  It is a familiar fact from $\SL{2}{Z}$ that the continuous spectrum is usually negligible compared to the cuspidal spectrum.

By Proposition \ref{prop:normformula}, we have (switching $u_j$ with $\overline{u_j}$)
\begin{equation}
\label{eq:normUBspectraldecomposition}
N_d(F) \asymp |A_F(1)|^2 \sumstar_{j} |B_j(1)|^2 \intR |L(1/2 + it, F \times u_j)|^2 q(t, \alpha, \beta_j) dt,
\end{equation}
where the $*$ indicates the sum is restricted to cusp form $u_j$ of the same parity of $F$, the implied constants depend only on $n$, and
\begin{equation}
\label{eq:qtalphabetaDEF}
q(t, \alpha, \beta_j) = \frac{\prod_{l=1}^{n} \prod_{k=1}^{n+1} |\Gamma(\frac{1/2 + it+ i\alpha_k + i\beta_{l,j}}{2}) |^2}{\Big(\prod_{1 \leq k < l \le n+1} |\Gamma(\frac{1+ i\alpha_k - i\alpha_l}{2}) |^2\Big) \Big(\prod_{1 \leq k < l \le n} | \Gamma(\frac{1+ i\beta_{k,j} - i\beta_{l,j}}{2})|^2 \Big)}.
\end{equation}
Recall that we are assuming all our forms are tempered so that $\alpha_k, \beta_{l,j} \in  \mathbb{R}$.  Then Stirling's formula shows that
\begin{equation}
\label{eq:qtalphabetaSTIRLING}
q(t, \alpha, \beta) \asymp \exp(- \frac{\pi}{2} r(t, \alpha, \beta)) \prod_{l=1}^{n} \prod_{k=1}^{n+1} (1 + |t + \alpha_k + \beta_{l}|)^{-1/2},
\end{equation}
where
\begin{equation}
r(t, \alpha, \beta) = \sum_{l=1}^n \sum_{k=1}^{n+1} |t + \alpha_k + \beta_{l}| - \sum_{1 \leq k < l \leq n+1} |\alpha_k-\alpha_l| - \sum_{1 \leq k < l \leq n} |\beta_{k} - \beta_{l}| .
\end{equation}

Since $F$ is a nice function that is $L^2$-normalized, all of its $L^p$ norms (as well as $N(F)$) are polynomially bounded in terms of $\lambda(F)$.  However, this is not (yet!) clear from the formula \eqref{eq:normUBspectraldecomposition}.  In Lemma \ref{lemma:rnonnegative} below we will show that $r(t, \alpha, \beta) \geq 0$ for all $t \in \mathbb{R}$, $\alpha \in \mathbb{R}^{n+1}$, and $\beta \in \mathbb{R}^n$ so that at least $q(t,\alpha, \beta)$ is not exponentially large.  We will also show that for some nice set of $t$ and $\beta$ that $r(t, \alpha, \beta) = 0$; outside of this set, $r(t,\alpha,\beta)$ quickly becomes large which allows us to finitize the integral over $t$ and sum over $j$ in \eqref{eq:normUBspectraldecomposition}.  The set of $\beta$'s such that there exists a $t$ with $r(t, \alpha, \beta) = 0$ turns out to define a polytope that we shall study extensively in Section \ref{section:polytope}.

\begin{mylemma}
\label{lemma:rnonnegative}
 Suppose $\alpha \in \mr^{n+1}$, $\beta \in \mr^n$, and $t \in \mathbb{R}$.  Then $r(t,\alpha, \beta) \geq 0$.  
\end{mylemma}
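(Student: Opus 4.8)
The plan is to reduce the inequality $r(t,\alpha,\beta)\geq 0$ to a purely combinatorial statement about the triangle-type quantities
\[
S(\gamma) = \sum_{1\leq k<l\leq m}|\gamma_k-\gamma_l|
\]
and, more to the point, to the ``interlacing'' behaviour of $\sum_{l,k}|t+\alpha_k+\beta_l|$. Since the claimed inequality is invariant under a common translation $\alpha_k\mapsto\alpha_k+c$, $t\mapsto t-2c$ (the first double sum is unchanged, and so are the two ``diagonal'' sums), and under the substitution $t\mapsto -t$, $\alpha\mapsto-\alpha$, $\beta\mapsto-\beta$, I would first normalize to a convenient position. The key structural observation is that $r$, as a function of the single real variable $t$ (with $\alpha,\beta$ fixed), is piecewise linear and convex: it is a finite sum of absolute-value functions $|t+\alpha_k+\beta_l|$ plus a constant, hence convex in $t$. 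So it attains its minimum over $t\in\mathbb R$ at one of the breakpoints $t=-\alpha_k-\beta_l$, or (if the leading slope never changes sign) one can push $t\to\pm\infty$; but as $t\to+\infty$ the first sum grows like $n(n+1)t$ while the subtracted terms are constant, so $r\to+\infty$, and similarly as $t\to-\infty$. Therefore it suffices to prove $r(t,\alpha,\beta)\geq 0$ at each breakpoint $t=-\alpha_{k_0}-\beta_{l_0}$.

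At such a breakpoint the term $|t+\alpha_{k_0}+\beta_{l_0}|$ vanishes, and I would expand the remaining $|t+\alpha_k+\beta_l| = |(\alpha_k-\alpha_{k_0})+(\beta_l-\beta_{l_0})|$ and apply the triangle inequality in the form $|a+b|\geq |a|$ when it helps, but more efficiently I would use the standard ``sorting'' lemma: for real numbers, $\sum_{k,l}|\alpha_k+\beta_l|$, summed over $k=1,\dots,n+1$ and $l=1,\dots,n$, dominates $\sum_{k<k'}|\alpha_k-\alpha_{k'}|$ plus $\sum_{l<l'}|\beta_l-\beta_{l'}|$ whenever the number of $\beta$'s (resp.\ $\alpha$'s) is at least the multiplicity needed to ``witness'' each difference. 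Concretely, order $\alpha_1\leq\dots\leq\alpha_{n+1}$ and $\beta_1\leq\dots\leq\beta_n$; then $\sum_{l}|\alpha_k+\beta_l - (\alpha_{k'}+\beta_l)| = n|\alpha_k-\alpha_{k'}|$ is false as written, so instead I would argue more carefully by a telescoping/rearrangement argument: write each $|\alpha_k-\alpha_{k'}| = \sum$ of consecutive gaps $\alpha_{i+1}-\alpha_i$, and similarly for $\beta$, reducing everything to showing that each consecutive gap is ``counted'' on the left-hand side with sufficient multiplicity. This is where the exact shapes $(n+1)$ and $n$ of the two index sets, together with the constraint $\sum\alpha_k=\sum\beta_l=0$, enter decisively.

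The step I expect to be the genuine obstacle is precisely this multiplicity bookkeeping: showing that after fixing the breakpoint and expanding, the positive contributions $\sum_{(k,l)\neq(k_0,l_0)}|(\alpha_k-\alpha_{k_0})+(\beta_l-\beta_{l_0})|$ can be distributed to cover \emph{both} $\sum_{k<k'}|\alpha_k-\alpha_{k'}|$ and $\sum_{l<l'}|\beta_l-\beta_{l'}|$ simultaneously without double-counting. I anticipate handling this by an explicit combinatorial pairing argument, or alternatively by induction on $n$: removing one index from the $\alpha$-set and one from the $\beta$-set and tracking how $r$ changes. Given the remark later in the excerpt that the vanishing locus of $r$ is governed by the branching rule for $GL_{n+1}(\mathbb C)\downarrow GL_n(\mathbb C)$ (i.e.\ interlacing of the $\alpha$'s by the $\beta$'s after a shift), I would in fact expect the clean proof to go through the identity: for $t$ in the ``interlacing range'' the function $r$ equals $0$, and convexity plus the two infinite-$t$ limits then force $r\geq 0$ everywhere. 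So the cleanest route is: (i) convexity in $t$; (ii) behaviour at $t\to\pm\infty$; (iii) identify the minimizing configuration and verify $r\geq 0$ there by the rearrangement/branching computation, which is the crux.
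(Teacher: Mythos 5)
Your reduction steps are the same as the paper's: $r(t,\alpha,\beta)$ is piecewise linear (indeed convex) in $t$, grows like $n(n+1)|t|$ as $t\to\pm\infty$, so its minimum is attained at a breakpoint $t=-\alpha_{k_0}-\beta_{l_0}$, and it suffices to prove nonnegativity there. But the step you yourself flag as ``the genuine obstacle'' --- the multiplicity bookkeeping at the breakpoint --- is exactly the content of the lemma, and you do not carry it out; the sorting/rearrangement inequality you appeal to is, as you note, false as stated, and the telescoping or ``pairing'' argument is only described, not executed. This is a genuine gap. The paper closes it by the induction you mention only in passing: since $r$ is symmetric in the $\alpha_k$'s and in the $\beta_l$'s separately, one may assume the minimizing breakpoint is $t=-\alpha_{n+1}-\beta_n$; there the terms of the double sum with $k=n+1$ contribute $\sum_{l<n}|\beta_l-\beta_n|$ and those with $l=n$ contribute $\sum_{k\le n}|\alpha_k-\alpha_{n+1}|$, which cancel \emph{exactly} against the corresponding terms of the two subtracted sums, leaving precisely
\begin{equation*}
\sum_{l=1}^{n-1}\sum_{k=1}^{n}\bigl|-\alpha_{n+1}-\beta_n+\alpha_k+\beta_l\bigr|-\sum_{1\le k<l\le n}|\alpha_k-\alpha_l|-\sum_{1\le k<l\le n-1}|\beta_k-\beta_l|,
\end{equation*}
i.e.\ $r(t,\alpha',\beta')$ for the truncated vectors $\alpha'=(\alpha_1,\dots,\alpha_n)$, $\beta'=(\beta_1,\dots,\beta_{n-1})$ at the same $t$; induction on $n$ (the case $n=1$ gives identically $0$ at the breakpoint) finishes the proof. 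Note that no sum-zero or ordering hypotheses are needed, so the induction is unobstructed.

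Your proposed shortcut via the branching/interlacing remark does not repair the gap: the identity $r\equiv 0$ on the median interval holds only when $\beta$ (suitably shifted by $t$) interlaces $-\alpha$, i.e.\ when \eqref{eq:alphajbetakcondition} holds, so convexity plus that identity yields $r\ge 0$ only for such special $\beta$, whereas the lemma is asserted for all $\beta\in\mr^n$; moreover the interlacing characterization is established in the paper (Lemma \ref{lemma:rzero}) \emph{after}, and partly by means of, the present lemma, so leaning on it here would invert the logical order.
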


\begin{proof}
Note that as a function of $t$, $r(t, \alpha, \beta)$ is piecewise linear.  It has slope $n(n+1)$ for large $t > 0$, and slope $-n(n+1)$ for large $-t > 0$.  Each time $-t$ passes through a point $\alpha_k + \beta_{l}$ the slope changes by $2$.  By this reasoning we see that the graph of $r(t,\alpha, \beta)$ is ``flat" (has zero slope) on an interval between the two ``middle" points $\alpha_k + \beta_{l}$.  To this end, it is natural to partition the set $S=\{ \alpha_k + \beta_l : 1 \leq k \leq n+1, 1 \leq l \leq n \}$ as $S_{+} \cup S_{-}$ where $|S_{+}| = |S_{-}| = \half |S|$, and each element of $S_+$ is $\geq$ each element of $S_-$; in case of multiplicity there may be more than one way to choose $S_+$ and $S_-$.  
Define the {\em median interval} $I_M$ as
\begin{equation}
\label{eq:IMdef}
 I_M = \{t \in \mathbb{R} : t+ s_{+} \geq 0 \text{ and } t + s_{-} \leq 0 \text{ for all } s_{+} \in S_{+} \text{ and } s_{-} \in S_{-} \}.
\end{equation}
Note that $I_M$ may consist of only one point if $S_+ \cap S_-$ is nonempty.

By elementary reasoning, the minimum of $r(t, \alpha, \beta)$ must occur when $t =- \alpha_k - \beta_{l}$ for some $k,l$.  By symmetry, say it occurs at $-\alpha_{n+1} -\beta_{n}$.
Then
\begin{multline}
\label{eq:rF}
r(-\alpha_{n+1} -  \beta_n, \alpha, \beta) = \sum_{l=1}^n \sum_{k=1}^{n+1} | \alpha_k - \alpha_{n+1} + \beta_{l} - \beta_n| 
- \sum_{1 \leq k < l \leq n+1} |\alpha_k-\alpha_l| 
- \sum_{1 \leq k < l \leq n} |\beta_{k} - \beta_{l}| .
\end{multline}
Proceed by induction.
If $n=1$ then \eqref{eq:rF} is zero, and we are done.  Suppose $n > 1$.
In the first sum above, take $k=n+1$ and $l=n$ separately, and similarly take $l=n+1$ and $l=n$ separately in the second and third sums above.  Then we obtain
\begin{equation}
\label{eq:rF2}
r(-\alpha_{n+1} -  \beta_n, \alpha, \beta) = \sum_{l=1}^{n-1} \sum_{k=1}^{n} | - \alpha_{n+1}  - \beta_n +\alpha_k + \beta_{l} | 
- \sum_{1 \leq k < l \leq n} |\alpha_k-\alpha_l| 
- \sum_{1 \leq k < l \leq n-1} |\beta_{k} - \beta_{l}|.
\end{equation} 
The right hand side of \eqref{eq:rF2} takes the form $r(t, \alpha', \beta')$ for some $t \in \mathbb{R}$ (in fact $t=-\alpha_{n+1}-\beta_n$), $\alpha' = (\alpha_1, \dots, \alpha_{n})$, and $\beta' = (\beta_1, \dots, \beta_{n-1})$,  so by the induction hypothesis we are done.
\end{proof}

\begin{mylemma}
\label{lemma:rzero}
 Suppose that $\alpha = (\alpha_1, \dots, \alpha_{n+1}) \in \mr^{n+1}$, $\beta = (\beta_1, \dots, \beta_n) \in \mr^n$ and $\alpha_1 \geq \alpha_2 \geq \dots \geq \alpha_{n+1}$, $\beta_1 \geq \beta_2 \geq \dots \geq \beta_n$. 
Then there exists $t\in \mr$ such that $r(t,\alpha,\beta) = 0$ if and only if
\begin{equation}
\label{eq:alphajbetakcondition}
\alpha_{n+1-k} + \beta_{k} \geq \alpha_{n+2-l} + \beta_{l}, 
\end{equation}
for any $k,l \in \{1, \dots, n\}$.  
\end{mylemma}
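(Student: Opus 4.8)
The plan is to turn the statement into a purely combinatorial fact about the multiset $S=\{\alpha_k+\beta_l : 1\le k\le n+1,\ 1\le l\le n\}$, which has $n(n+1)$ elements. First I would record the elementary identity
\[
 r(t,\alpha,\beta)=\sum_{s\in S}|t+s|-D_\alpha-D_\beta,\qquad D_\alpha:=\!\!\sum_{1\le k<l\le n+1}\!\!(\alpha_k-\alpha_l),\quad D_\beta:=\!\!\sum_{1\le k<l\le n}\!\!(\beta_k-\beta_l),
\]
valid because $\alpha$ and $\beta$ are sorted. Since $|S|$ is even, $\min_t\sum_{s\in S}|t+s|$ is attained for $t$ in the median interval and equals $\sigma_{\max}:=\max\big(\sum_{s\in P}s-\sum_{s\in Q}s\big)$, the maximum running over partitions $S=P\sqcup Q$ with $|P|=|Q|$ and being realized by the partition with $P$ consisting of the $\tfrac{n(n+1)}{2}$ largest elements (ties broken arbitrarily). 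Hence $\min_t r(t,\alpha,\beta)=\sigma_{\max}-D_\alpha-D_\beta$. Because $r$ is continuous with $r\to\infty$ at $\pm\infty$ and $r\ge 0$ by Lemma \ref{lemma:rnonnegative}, there exists $t$ with $r(t,\alpha,\beta)=0$ if and only if $\sigma_{\max}=D_\alpha+D_\beta$.

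The second step is to recognize $D_\alpha+D_\beta$ as the signed sum attached to the distinguished balanced ``diagonal'' partition $P_0=\{\alpha_k+\beta_l:k+l\le n+1\}$, $Q_0=\{\alpha_k+\beta_l:k+l\ge n+2\}$. A routine count gives $|P_0|=|Q_0|=\tfrac{n(n+1)}{2}$, and shows that in $\sum_{s\in P_0}s-\sum_{s\in Q_0}s$ the coefficient of $\alpha_k$ is $n+2-2k$ and the coefficient of $\beta_l$ is $n+1-2l$; these are exactly the coefficients of $\alpha_k$ in $D_\alpha$ and of $\beta_l$ in $D_\beta$. Thus $\min_t r=\sigma_{\max}-\big(\sum_{P_0}s-\sum_{Q_0}s\big)\ge 0$ (an alternative proof of Lemma \ref{lemma:rnonnegative}), with equality precisely when $(P_0,Q_0)$ attains $\sigma_{\max}$. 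Since interchanging some $p\in P_0$ with some $q\in Q_0$ changes the signed sum by $2(q-p)$, the partition $(P_0,Q_0)$ is maximal if and only if $p\ge q$ for all $p\in P_0$ and $q\in Q_0$.

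The last step is to show that this inequality, namely ``$\alpha_k+\beta_l\ge\alpha_{k'}+\beta_{l'}$ whenever $k+l\le n+1\le k'+l'-1$'', is equivalent to \eqref{eq:alphajbetakcondition}. For the forward implication, given such $(k,l)$ and $(k',l')$, monotonicity of $\alpha$ together with $k\le n+1-l$ and $k'\ge n+2-l'$ yields $\alpha_k+\beta_l\ge\alpha_{n+1-l}+\beta_l$ and $\alpha_{k'}+\beta_{l'}\le\alpha_{n+2-l'}+\beta_{l'}$, while \eqref{eq:alphajbetakcondition} applied to the pair $(l,l')$ gives $\alpha_{n+1-l}+\beta_l\ge\alpha_{n+2-l'}+\beta_{l'}$; chaining the three inequalities finishes it. For the converse, given $k,l\in\{1,\dots,n\}$, the element $\alpha_{n+1-k}+\beta_k$ lies in $P_0$ (its two indices sum to $n+1$) and $\alpha_{n+2-l}+\beta_l$ lies in $Q_0$ (its two indices sum to $n+2$), so the inequality applied to this pair is exactly \eqref{eq:alphajbetakcondition}.

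I expect the main obstacle to be the middle step: isolating the correct partition $P_0/Q_0$ against which to compare $\sigma_{\max}$, and seeing that the equality case of $\sigma_{\max}\ge\sum_{P_0}-\sum_{Q_0}$ is governed by an interlacing-type condition on the numbers $\alpha_k+\beta_l$. (This is the point at which, in the surrounding discussion, the $GL_{n+1}\downarrow GL_n$ branching rule enters.) By contrast, the coefficient bookkeeping in step two and the three-line sandwich argument in step three are mechanical once the right partition has been found.
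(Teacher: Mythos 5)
Your proposal is correct and follows essentially the same route as the paper: both arguments evaluate the minimum of $\sum_{s\in S}|t+s|$ on the median interval, compare the extremal (largest-half) balanced partition of $S$ with the ``diagonal'' partition $\{k+l\le n+1\}$ versus $\{k+l\ge n+2\}$ whose signed sum equals $D_\alpha+D_\beta$, and characterize equality by the requirement that every above-line entry dominate every below-line entry, which is \eqref{eq:alphajbetakcondition}. Your swap argument for maximality and the explicit coefficient bookkeeping are just a more self-contained packaging of the paper's computation (and incidentally re-prove Lemma \ref{lemma:rnonnegative}), so no gap remains.
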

It may be helpful to visualize the numbers $\alpha_k + \beta_l$ in the following array:
\begin{equation}
\label{eq:alphabetaarray}
 \begin{array}{cccc}
  \alpha_1 + \beta_1 & \alpha_1 + \beta_2 & \dots & \alpha_1 + \beta_n \\
\cline{4-4}
\alpha_2 + \beta_1 & \alpha_2 + \beta_2 & \dots & \alpha_2 + \beta_n \\
\vdots & \ddots & \dots & \dots \\
\cline{2-2}
\alpha_n + \beta_1 & \alpha_n + \beta_2 & \dots & \alpha_n + \beta_n \\
\cline{1-1}
\alpha_{n+1} + \beta_1 & \alpha_{n+1} + \beta_2 & \dots & \alpha_{n+1} + \beta_n.
 \end{array}
\end{equation}
Notice that the entries of the array are decreasing (that is, non-increasing) in each row and in each column.  The condition \eqref{eq:alphajbetakcondition} means that each entry above one of the horizontal lines is $\geq$ each entry below one of the horizontal lines.
\begin{proof}
 From the orderings imposed on the $\alpha_k$ and $\beta_l$, we have
\begin{equation}
 r(t, \alpha, \beta) = \sum_{l=1}^n \sum_{k=1}^{n+1} |t + \alpha_k + \beta_{l}| - n \alpha_1 - (n-2) \alpha_2 - \dots + n \alpha_{n+1} - (n-1)\beta_1  - \dots + (n-1)\beta_n.
\end{equation}
From Lemma \ref{lemma:rnonnegative}, we know $r(t,\alpha, \beta) \geq 0$.  Furthermore, as noted in the proof of Lemma \ref{lemma:rnonnegative}, $r$ is minimized when $t \in I_M$, so the only possible region of $t$ with $r(t,\alpha, \beta) = 0$ is $t \in I_M$.  By glancing at \eqref{eq:alphabetaarray}, one can see that $S_{+}$ consists of the elements $\alpha_k + \beta_l$ lying above the horizontal lines, and similarly the elements of $S_{-}$ are below the horizontal lines.  Using this, one calculates easily that $r(t,\alpha,\beta) = 0$ for such $t$.  Thus, if \eqref{eq:alphajbetakcondition} holds then $r(t, \alpha, \beta) = 0$ for $t \in I_M$.

Next we show the other half of the ``iff'' statement.
In general (not necessarily assuming \eqref{eq:alphajbetakcondition} holds), one obtains that for $t \in I_M$ that
\begin{equation}
\label{eq:rtalphabetageneral}
 r(t,\alpha,\beta) = \sum_{s_+ \in S_{+}} s_+ - \sum_{s_- \in S_-} s_- - n \alpha_1 - (n-2) \alpha_2 - \dots + n \alpha_{n+1} - (n-1)\beta_1  - \dots + (n-1)\beta_n.
\end{equation}
Let $T_+$ be the set of elements $\alpha_k + \beta_l$ above the horizontal lines in \eqref{eq:alphabetaarray}, and similarly $T_-$ is the set of elements below the horizontal lines.  Then our previous calculation shows
\begin{equation}
 \sum_{t_+ \in T_+} t_+ - \sum_{t_- \in T_-} t_- - n \alpha_1 - (n-2) \alpha_2 - \dots + n \alpha_{n+1} - (n-1)\beta_1  - \dots + (n-1)\beta_n = 0,
\end{equation}
so inserting this into \eqref{eq:rtalphabetageneral}, we obtain
\begin{equation}
 r(t, \alpha, \beta) = \sum_{s_+ \in S_{+}} s_+ -\sum_{t_+ \in T_+} t_+ + \sum_{t_- \in T_-} t_- - \sum_{s_- \in S_-} s_-  = 2 \sum_{s_+ \in S_+ \cap T_-} s_+ - 2 \sum_{s_- \in S_- \cap T_+} s_-.
\end{equation}
If \eqref{eq:alphajbetakcondition} does not hold then there exists an $s_+ \in S_+ \cap T_-$ and $s_- \in S_- \cap T_+$ such that $s_+ > s_-$, and hence $r(t,\alpha, \beta) > 0$.
\end{proof}

The structure of the set of $t \in I_M$ and $\beta$ satisying \eqref{eq:alphajbetakcondition} is related to the branching law\footnote{A branching law describes how a representation of a group $G$ decomposes into irreducible representations upon restriction to a subgroup $H$ of $G$.} of $\GL{n+1}{C}$ to $\GL{n}{C}$ as we now explain.  Let $\lambda_j = \beta_j + t$ for all $j$; in terms of $\lambda = (\lambda_1, \dots, \lambda_n)$, the condition that $t \in I_M$ and $\beta$ satisfies \eqref{eq:alphajbetakcondition} is equivalent to $\lambda_j + \alpha_{n+1-j} \geq 0$ and $\lambda_j + \alpha_{n+2-j} \leq 0$, for all $j \in \{1,\dots, n\}$.  This is in turn equivalent to
\begin{equation}
\label{eq:interlacing}
 -\alpha_{n+1} \geq \lambda_1 \geq -\alpha_n \geq \lambda_2 \geq \dots \geq -\alpha_2 \geq \lambda_n \geq -\alpha_1.
\end{equation}
Note that $\widetilde{\alpha} :=(-\alpha_{n+1}, \dots, -\alpha_1)$ are the Langlands parameters of $\overline{F}$, and \eqref{eq:interlacing} then says that $\lambda$ {\em interlaces} $\widetilde{\alpha}$ (it would also be natural to take the dual of $u_j$ instead of $F$).  
Theorem 8.1.1 of \cite{GoodmanWallach}, for example, expresses the branching law from $\GL{n+1}{C}$ to $\GL{n}{C}$ via the interlacing of the highest weight vectors of the corresponding irreducible representations of the two groups.
For more information about branching laws, for example see Chapter 8 of \cite{GoodmanWallach} or Chapter XVIII of \cite{Zelobenko}.


\section{The upper bound}
\label{section:upperbound}
Lemmas \ref{lemma:rnonnegative} and \ref{lemma:rzero} give us good control on the exponential part of $q(t,\alpha, \beta)$.  We also need to understand the rational part of $q$ which is established with the following.
\begin{mylemma}
\label{lemma:qtalphabetaUB}
 Suppose that $\alpha, \beta$ are as in Lemma \ref{lemma:rzero}, \eqref{eq:alphajbetakcondition} holds, and $t \in I_M$.  Then
\begin{equation}
\label{eq:qtalphabetaUB}
 q(t, \alpha, \beta) \ll \frac{1}{\mu(\beta)} \prod_{k+l = n+1} (1+ |t+ \alpha_k + \beta_l|)^{-1/2}  \prod_{k+l = n+2} (1+ |t+ \alpha_k + \beta_l|)^{-1/2}.
\end{equation}
\end{mylemma}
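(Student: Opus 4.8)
The plan is to first strip off the exponential factor in $q$, then reduce \eqref{eq:qtalphabetaUB} to a combinatorial inequality among the quantities $1+|t+\alpha_k+\beta_l|$, and finally establish that inequality by an explicit bijection furnished by the interlacing \eqref{eq:interlacing}.

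First I would apply Lemma \ref{lemma:rzero}, or rather the computation in its proof showing $r(t,\alpha,\beta)=0$ whenever $t\in I_M$ and \eqref{eq:alphajbetakcondition} holds. Feeding $r=0$ into the Stirling estimate \eqref{eq:qtalphabetaSTIRLING} gives $q(t,\alpha,\beta)\asymp\prod_{l=1}^{n}\prod_{k=1}^{n+1}(1+|t+\alpha_k+\beta_l|)^{-1/2}$. Call a pair $(k,l)$ with $1\le k\le n+1$, $1\le l\le n$ a \emph{boundary entry} when $k+l\in\{n+1,n+2\}$ (these are exactly the entries of \eqref{eq:alphabetaarray} flanking the horizontal lines drawn there) and an \emph{interior entry} otherwise. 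Splitting the product accordingly, and noting $\prod_{k+l=n+1}(\cdots)^{-1/2}\prod_{k+l=n+2}(\cdots)^{-1/2}=\prod_{(k,l)\ \mathrm{boundary}}(\cdots)^{-1/2}$, the lemma becomes equivalent to
\[
\prod_{(k,l)\ \mathrm{interior}}\bigl(1+|t+\alpha_k+\beta_l|\bigr)\ \gg\ \mu(\beta)^2=\prod_{\substack{1\le l,l'\le n,\ l\ne l'}}\bigl(1+|\beta_l-\beta_{l'}|\bigr).
\]
Each column of \eqref{eq:alphabetaarray} contains exactly two boundary entries (rows $n+1-l$ and $n+2-l$) and hence $n-1$ interior entries, so both sides are products of $n(n-1)$ factors; this strongly suggests proving the inequality factor-by-factor.

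The key input is the sign structure: by \eqref{eq:interlacing} every column $l$ satisfies $t+\alpha_{n+1-l}+\beta_l\ge 0\ge t+\alpha_{n+2-l}+\beta_l$, and since the entries decrease down each column we get $t+\alpha_k+\beta_l\ge 0$ for $k+l\le n+1$ and $t+\alpha_k+\beta_l\le 0$ for $k+l\ge n+2$; thus the two boundary entries of column $l$ are its last nonnegative and its first nonpositive entry. I would then build the bijection column by column: in column $l$, send the interior entry in row $k\le n-l$ to the index $l':=n+1-k\in\{l+1,\dots,n\}$, and the interior entry in row $k\ge n+3-l$ to $l':=n+2-k\in\{1,\dots,l-1\}$; as $k$ runs over the interior rows of column $l$ this gives a bijection onto $\{1,\dots,n\}\setminus\{l\}$. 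For an entry of the first type, write $t+\alpha_k+\beta_l=(t+\alpha_{n+1-l'}+\beta_{l'})+(\beta_l-\beta_{l'})$: the first summand is the boundary entry $(n+1-l',l')$ of column $l'$ and so is $\ge 0$, while $\beta_l-\beta_{l'}\ge 0$ because $l<l'$, whence $1+|t+\alpha_k+\beta_l|\ge 1+|\beta_l-\beta_{l'}|$. An entry of the second type is handled identically with both summands $\le 0$. Multiplying over all interior entries — equivalently over all columns $l$ and then all $l'\ne l$ — yields the displayed inequality, and the lemma follows.

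The main obstacle, such as it is, lies in this last step: recognizing that every interior entry of a column telescopes cleanly onto a \emph{boundary} entry of another column, whose sign is pinned down by the interlacing, and then checking the bookkeeping — that the assignment is simultaneously a bijection within each column and accounts for precisely the $n(n-1)$ factors of $\mu(\beta)^2$. Multiplicities among the $\alpha_j$ or $\beta_j$ (and the fact that $I_M$ may degenerate to a point) cause no difficulty, since every relation used above is an inequality rather than an identity.
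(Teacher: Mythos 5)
Your proposal is correct and follows essentially the same route as the paper: after using Lemma \ref{lemma:rzero} to kill the exponential factor, you compare each interior entry $t+\alpha_k+\beta_l$ with the boundary entry in its row (i.e.\ $t+\alpha_k+\beta_{n+1-k}$ or $t+\alpha_k+\beta_{n+2-k}$), whose sign is fixed by $t\in I_M$, and the resulting factors $1+|\beta_l-\beta_{l'}|$ assemble into $\mu(\beta)$ exactly as in the paper's proof. The only difference is expository: you make the bijective bookkeeping with $\mu(\beta)^2$ explicit, whereas the paper states the same identification of products directly.
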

Recall that $\mu(\beta)$ is defined by \eqref{eq:mudefinition}.
\begin{proof}
We estimate $q(t,\alpha,\beta)$ using \eqref{eq:qtalphabetaSTIRLING}.
 Since we assume $t \in I_M$ and \eqref{eq:alphajbetakcondition} holds, we have $r(t,\alpha,\beta) = 0$.  The terms with $k+l = n+1$ and $k+l=n+2$ are already present in \eqref{eq:qtalphabetaUB}; these are the terms corresponding to elements of the array \eqref{eq:alphabetaarray} directly above or below one of the horizontal lines.  Consider first the other terms above the horizontal lines in \eqref{eq:alphabetaarray}, that is, $1 + |t+ \alpha_k + \beta_l|$ with $k +l \leq n$.  Since $t \in I_M$ and we have the ordering \eqref{eq:alphajbetakcondition}, then
\begin{equation}
1 + |t+\alpha_k + \beta_l| = 1 + t + \alpha_k + \beta_l = 1 + (\beta_l - \beta_{n+1-k}) + (t + \alpha_{k} + \beta_{n+1-k}) \geq 1 + (\beta_l - \beta_{n+1-k}).
\end{equation}
Thus we have
\begin{equation}
 \prod_{k+l \leq n} (1 + |t + \alpha_k + \beta_l|)^{-1/2} \leq \prod_{k+l \leq n} (1 + |\beta_l - \beta_{n+1-k}|)^{-1/2} = \mu(\beta)^{-1/2},
\end{equation}
recalling the definition \eqref{eq:mudefinition}.
Similarly, for the terms with $k+ l \geq n+3$ we have
\begin{equation}
 1 + |t+ \alpha_k + \beta_l| = 1-t - \alpha_k - \beta_l = 1 - t - \alpha_k -\beta_{n+2-k} + (\beta_{n+2-k} - \beta_l) \geq 1 + (\beta_{n+2-k} - \beta_l).
\end{equation}
Then these terms with $k+l \geq n+3$ also contribute $\leq \mu(\beta)^{-1/2}$ to $q(t,\alpha,\beta)$, and the proof is complete.
\end{proof}

We shall need the following elementary integral bound.
\begin{mylemma}  Suppose $a, b \in \mathbb{R}$.  Then
\begin{equation}
\label{eq:telementaryintegral}
 \int_{-X}^X (1 + |t+a|)^{-1/2} (1+ |t+b|)^{-1/2} dt \leq \log(1+|a|+X) + \log(1+|b|+X).
\end{equation}
Similarly,
\begin{equation}
\label{eq:nelementarysum}
 \sum_{|n| \leq X} (1 + |n+a|)^{-1/2} (1 + |n+b|)^{-1/2} \ll \log(2+|a|+X) + \log(2+|b|+X)
\end{equation}
Finally, if $a < b$ then
\begin{equation}
\label{eq:hahb}
\int_{-b}^{-a} (1 + |t+a|)^{-1/2} (1+ |t+b|)^{-1/2} dt \leq 4.
\end{equation}
\end{mylemma}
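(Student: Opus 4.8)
The plan is to dispatch all three inequalities by elementary means, in each case reducing the product of the two factors either to a sum of two simpler pieces (via the arithmetic--geometric mean inequality) or, in the last case, to a dyadic split of the interval.

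For \eqref{eq:telementaryintegral} I would first apply AM--GM in the form $(1+|t+a|)^{-1/2}(1+|t+b|)^{-1/2} \le \tfrac12\bigl[(1+|t+a|)^{-1} + (1+|t+b|)^{-1}\bigr]$, which decouples the two parameters. It then suffices to bound $\int_{-X}^{X}(1+|t+a|)^{-1}\,dt$: after the substitution $u=t+a$ this is an integral over $[-X+a,X+a]$, an interval of length $2X$ contained in $[-(X+|a|),X+|a|]$, and since $(1+|u|)^{-1}$ is even and decreasing in $|u|$ the integral is at most $\int_{-(X+|a|)}^{X+|a|}(1+|u|)^{-1}\,du = 2\log(1+|a|+X)$. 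Adding the two halves gives exactly the stated bound. The sum \eqref{eq:nelementarysum} is handled the same way; after the identical AM--GM step one compares $\sum_{|n|\le X}(1+|n+a|)^{-1}$ with the corresponding integral, absorbing the single largest term (which is $\le 1$) as an error, and obtains $\ll \log(2+|a|+X)$, and symmetrically for $b$.

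For \eqref{eq:hahb}, the point is that on $[-b,-a]$ we have $t+a\le 0\le t+b$, so the integrand equals $(1-t-a)^{-1/2}(1+t+b)^{-1/2}$; substituting $s=t+b$ and writing $c=b-a>0$ turns the integral into $\int_0^{c}(1+c-s)^{-1/2}(1+s)^{-1/2}\,ds$. Here AM--GM or Cauchy--Schwarz is too lossy (they only yield $\log(1+c)$), so instead I would split the range at $s=c/2$: on $[0,c/2]$ we have $1+c-s\ge 1+c/2$, hence $(1+c-s)^{-1/2}\le(1+c/2)^{-1/2}$ and $\int_0^{c/2}(1+c/2)^{-1/2}(1+s)^{-1/2}\,ds = 2-2(1+c/2)^{-1/2}<2$; the contribution of $[c/2,c]$ is identical under $s\mapsto c-s$, so the total is $<4$.

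All of the individual estimates are routine; the only genuine subtlety is recognizing that in \eqref{eq:hahb} the two factors are ``anti-correlated'' on the relevant interval, so the right tool is the dyadic split rather than a H\"older-type inequality, which would cost a spurious logarithm.
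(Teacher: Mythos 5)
Your proof is correct and follows essentially the same route as the paper: the AM--GM step $2\sqrt{xy}\le x+y$ to decouple the two factors in \eqref{eq:telementaryintegral} and \eqref{eq:nelementarysum}, and for \eqref{eq:hahb} the symmetric split of $[-b,-a]$ at its midpoint, bounding the ``far'' factor by its value there and integrating the other explicitly. Your handling of \eqref{eq:telementaryintegral} by enlarging the shifted interval to $[-(X+|a|),X+|a|]$ is a slightly cleaner way to avoid the paper's case check $X\ge|a|$ versus $X<|a|$, but the argument is the same in substance.
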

 \begin{proof}
We begin with \eqref{eq:telementaryintegral}. Using $2\sqrt{xy} \leq x + y$, it is enough to consider the case $a=b$.  For $X \geq |a|$, we have
\begin{equation}
 \int_{-X}^{X} (1+ |t+a|)^{-1} dt = \log(1 + a + X) + \log(1 + X-a) \leq 2 \log(1 + |a| + X).
\end{equation}
One can check the same bound holds for $X < |a|$ also, so the proof of \eqref{eq:telementaryintegral} is complete.  The proof of \eqref{eq:nelementarysum} follows similar lines.

Let $h_u(t) = (1 + |t+u|)^{-1/2}$.  Then 
\begin{multline}
 \int_{-b}^{-a} h_a(t) h_b(t) dt = 2 \int_{-b}^{-b + \frac{b-a}{2}}  h_a(t) h_b(t) dt \leq 2(1 + \frac{b-a}{2})^{-1/2} \int_{-b}^{-b + \frac{b-a}{2}} h_b(t) dt
\\
= 4(1 + \frac{b-a}{2})^{-1/2} \big((1 + \frac{b-a}{2})^{1/2} - 1 \big) \leq 4. \qedhere
\end{multline}

\end{proof}

Now we are ready to prove Theorem \ref{thm:normUB}.  First we show the following
\begin{mylemma}
\label{lemma:qUB}
 We have
\begin{equation}
\label{eq:localsum}
 \sum_{\beta} |B_{\beta}(1)|^2 \int_{t \in I_M} q(t,\alpha,\beta) dt \ll 1,
\end{equation}
where the sum is over $\beta$, the Langlands parameters of the $SL_{n}(\mz)$ cuspidal spectrum, which also satisfy \eqref{eq:alphajbetakcondition}.
\end{mylemma}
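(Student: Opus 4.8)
The plan is to use Lemma~\ref{lemma:qtalphabetaUB} together with the interlacing reformulation following Lemma~\ref{lemma:rzero} to collapse the left side of \eqref{eq:localsum} into a product of one-dimensional integrals, each controlled by \eqref{eq:hahb}. First, by Lemma~\ref{lemma:qtalphabetaUB} we have, for $t\in I_M$ and $\beta$ obeying \eqref{eq:alphajbetakcondition}, that $q(t,\alpha,\beta)\ll\mu(\beta)^{-1}P_\beta(t)$, where
\[
P_\beta(t)=\prod_{l=1}^{n}\bigl(1+|t+\alpha_{n+1-l}+\beta_l|\bigr)^{-1/2}\bigl(1+|t+\alpha_{n+2-l}+\beta_l|\bigr)^{-1/2}
\]
collects the $k+l=n+1$ and $k+l=n+2$ factors of \eqref{eq:qtalphabetaUB}. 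Putting $\lambda_l=\beta_l+t$, the computation after Lemma~\ref{lemma:rzero} shows that (under \eqref{eq:alphajbetakcondition}) one has $t\in I_M$ exactly when $\lambda_l\in[-\alpha_{n+1-l},-\alpha_{n+2-l}]$ for every $l$. Hence, with
\[
\psi_l(x)=\mathbf{1}_{[-\alpha_{n+1-l},\,-\alpha_{n+2-l}]}(x)\,\bigl(1+|x+\alpha_{n+1-l}|\bigr)^{-1/2}\bigl(1+|x+\alpha_{n+2-l}|\bigr)^{-1/2},
\]
we obtain $\mathbf{1}_{I_M}(t)P_\beta(t)=\prod_{l=1}^{n}\psi_l(\beta_l+t)$, and $\int_{\mathbb{R}}\psi_l\le 4$ by \eqref{eq:hahb}. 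It therefore suffices to bound $\sum_{\beta}\mu(\beta)^{-1}|B_\beta(1)|^2\int_{\mathbb{R}}\prod_{l=1}^{n}\psi_l(\beta_l+t)\,dt$.

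I would then discretize the $\beta$-sum. Tile the hyperplane $\{\beta_1+\dots+\beta_n=0\}$ by cubes $Q_i$ of side-length $\asymp 1$ (small enough that $Q_i\subset\{\|\beta-c_i\|\le 1\}$), with centers $c_i$. On each $Q_i$, $\mu(\beta)\asymp\mu(c_i)$; the weighted local Weyl law \eqref{eq:WLWLupper} gives $\sum_{\beta\in Q_i}|B_\beta(1)|^2\ll\mu(c_i)$; and $\psi_l(\beta_l+t)\le\Psi_l(c_{i,l}+t)$ for $\beta\in Q_i$, where $\Psi_l(x):=\sup_{|y|\le 1}\psi_l(x+y)$ still has $\int_{\mathbb{R}}\Psi_l\ll 1$ (since $\psi_l$ has total variation $O(1)$, enlarging its support by $O(1)$ costs only $O(1)$). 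Summing over the cubes, the weights $\mu(\beta)^{-1}$ and $\sum_{\beta\in Q_i}|B_\beta(1)|^2$ cancel, leaving
\[
\sum_{\beta}\frac{|B_\beta(1)|^2}{\mu(\beta)}\int_{\mathbb{R}}\prod_{l=1}^{n}\psi_l(\beta_l+t)\,dt\ \ll\ \sum_{i}\int_{\mathbb{R}}\prod_{l=1}^{n}\Psi_l(c_{i,l}+t)\,dt.
\]
This cancellation of $\mu(\beta)$ is the crucial point: it is what allows a bound of size $O_n(1)$ despite the number of relevant $\beta$ being comparable to the (possibly large) volume of the polytope cut out by \eqref{eq:alphajbetakcondition}.

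Finally, I would interpret the right side via the change of variables $\lambda_l=\beta_l+t$, which has constant nonzero Jacobian and carries the admissible pairs $(\beta,t)$ (those with $\beta_1\ge\dots\ge\beta_n$, $\sum_l\beta_l=0$, $\beta$ obeying \eqref{eq:alphajbetakcondition}, and $t\in I_M(\beta)$) bijectively onto the box $B=\prod_{l=1}^{n}[-\alpha_{n+1-l},-\alpha_{n+2-l}]$, namely the interlacing region of \eqref{eq:interlacing}, on which $\int_B\prod_l\psi_l(\lambda_l)\,d\lambda=\prod_{l=1}^{n}\int_{\mathbb{R}}\psi_l\le 4^n$. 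To make the passage from the discrete sum to this integral rigorous: each $c_i$ lies in $\mathbf{1}^{\perp}$ with $\mathbf{1}=(1,\dots,1)$, so the substitution $\lambda=c_i+t\mathbf{1}$ identifies $\int_{\mathbb{R}}\prod_l\Psi_l(c_{i,l}+t)\,dt$ with a constant times the integral of $G(\lambda):=\prod_l\Psi_l(\lambda_l)$ along the line $\ell_i=c_i+\mathbb{R}\mathbf{1}$; these lines are parallel and $\gg 1$-separated, so fattening each to a tube of unit cross-section (the tubes tiling $\mathbb{R}^n$) and bounding $\int_{\ell_i}G$ by $\int_{T_i}G^{\sharp}$ for $G^{\sharp}$ a local maximal function of $G$ yields $\sum_i\int_{\mathbb{R}}\prod_l\Psi_l(c_{i,l}+t)\,dt\ll\int_{\mathbb{R}^n}G^{\sharp}\le\prod_{l=1}^{n}\int_{\mathbb{R}}\Psi_l^{\sharp}\ll_n 1$, the last product being finite for the same reason as $\int_{\mathbb{R}}\Psi_l$.

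I expect the main obstacle to be this discretization, not the final factorization: one has to check that $P_\beta$ and the interval $I_M(\beta)$ vary slowly enough in $\beta$ that \eqref{eq:WLWLupper} can be applied cube by cube with the $\mu$-weights cancelling, and that the $O(1)$ enlargements this forces inflate the one-dimensional integrals by only $O_n(1)$ — which rests on the stability of $\int_{\mathbb{R}}\psi_l\le 4$ under enlarging the supporting interval by a bounded amount. Granting this, \eqref{eq:localsum} reduces to a product of $n$ one-dimensional integrals, each $\le 4$ by \eqref{eq:hahb}.
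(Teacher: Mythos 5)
Your proposal is correct and follows essentially the same route as the paper: bound $q$ by Lemma \ref{lemma:qtalphabetaUB}, cancel $\mu(\beta)$ against the weighted local Weyl law \eqref{eq:WLWLupper} on unit cells of the hyperplane, and reduce, after the substitution $\lambda_l=\beta_l+t$, to a product of one-dimensional integrals over the interlacing box \eqref{eq:interlacing}, each $O(1)$ by \eqref{eq:hahb}. The only difference is in the packaging of the sum-to-integral step, where you use tubes around the parallel lines $c_i+\mathbb{R}\mathbf{1}$ and a local maximal function, while the paper uses an elementary coordinate-wise comparison of the lattice sum with the corresponding integral.
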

Note that if $t \in I_M$, then it is implicit that \eqref{eq:alphajbetakcondition} holds, by Lemma \ref{lemma:rzero}.
\begin{proof}
Let $S$ denote the left hand side of \eqref{eq:localsum}.  
By Lemma \ref{lemma:qtalphabetaUB}, we have
\begin{equation}
S \ll \sum_{\beta} \int_{t \in I_M} |B_\beta(1)|^2 \frac{1}{\mu(\beta)} f(\beta + t) dt,
\end{equation}
where with $\lambda = (\lambda_1, \dots, \lambda_n)$, we set
\begin{equation}
 f(\lambda) = \prod_{k+l = n+1} (1+ |\alpha_k + \lambda_l|)^{-1/2}  \prod_{k+l = n+2} (1+ |\alpha_k + \lambda_l|)^{-1/2}.
\end{equation}
Let $\gamma = (\gamma_1, \dots, \gamma_n) \in \mathbb{Z}^n$ and let $H$ denote the hyperplane $u_1 + \dots + u_n = 0$.  Note that $f(\lambda') \asymp f(\lambda)$ if $\| \lambda - \lambda' \| = O(1)$.  Hence
\begin{equation}
\label{eq:Sbound}
 S \ll \sum_{\gamma \in \mathbb{Z}^n \cap H} \sum_{\beta: \|\beta - \gamma \| \leq K} \int_{t \in I_M} \frac{|B_{\beta}(1)|^2}{\mu(\gamma)} f(\gamma + t) dt.
\end{equation}
Recall that the condition $t \in I_M$ is equivalent to
\eqref{eq:interlacing}, that is,
\begin{equation}
 -\alpha_{n+1} \geq \beta_1 + t \geq -\alpha_n \geq \dots \geq -\alpha_2 \geq \beta_n + t \geq -\alpha_1,
\end{equation}
so by positivity we can extend this condition $t \in I_M$ to
\begin{equation}
\label{eq:tinterlacing}
 -\alpha_{n+2-k} + K \geq \gamma_k + t \geq -\alpha_{n+1-k} - K,
\end{equation}
for all $k\in \{1,2,\dots, n\}$.  Hence we obtain by \eqref{eq:WLWLupper} that
\begin{equation}
 S \ll \sum_{\gamma \in \mathbb{Z}^n \cap H} \int_{t} f(\gamma + t) dt ,
\end{equation}
where the integral is over $t$ such that \eqref{eq:tinterlacing} holds.
Next we argue that the sum over $\gamma$ can be replaced by an integral.  For this, we use the simple inequality
\begin{equation}
 \sum_{-a-K \leq m \leq -b + K} (1 + |m+a|)^{-1/2} (1 + |m+b|)^{-1/2} \ll 1 + \int_{-a-K}^{-b + K} (1 + |u+a|)^{-1/2} (1 + |u+b|)^{-1/2} du,
\end{equation}
and note that since $K \geq 1$, the integral is $\gg 1$, so in fact the sum can be bounded by a constant multiple of the integral.
Changing variables $\lambda_k = \gamma_k + t$, we have that the region of integration for $\lambda$ is a box.  Specifically, we have
\begin{equation}
\label{eq:Sbound2}
 S \ll \prod_{k=1}^{n} \int_{-\alpha_{n+1-k} -K}^{-\alpha_{n+2-k} +K} (1 + |\alpha_{n+2-k} + \lambda_k|)^{-1/2} (1 + |\alpha_{n+1-k} + \lambda_k|)^{-1/2} d\lambda_k.
\end{equation}
By \eqref{eq:hahb}, we deduce $S \ll 1$, as desired.
\end{proof}

\begin{mytheo}
\label{thm:normcuspUB}
 Assume the conditions of Theorem \ref{thm:normUB}.  Then
\begin{equation}
 N_d(F) \ll \lambda(F)^{\varepsilon}.
\end{equation}
\end{mytheo}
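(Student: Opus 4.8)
The plan is to estimate the cuspidal part $N_d(F)$ starting from the spectral identity \eqref{eq:normUBspectraldecomposition}, use the exponential decay encoded by $r(t,\alpha,\beta_j)$ to localize both the $t$-integral and the sum over $u_j$ to (a thickening of) the polytope cut out by \eqref{eq:alphajbetakcondition}, and then invoke Lemma \ref{lemma:qUB} after stripping off the $L$-values and the first Fourier coefficient by the two hypotheses of Theorem \ref{thm:normUB}.

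First I would insert the Stirling asymptotic \eqref{eq:qtalphabetaSTIRLING} into \eqref{eq:normUBspectraldecomposition}, writing
\begin{equation*}
N_d(F) \ll |A_F(1)|^2 \sumstar_j |B_j(1)|^2 \intR |L(\tfrac{1}{2}+it, F\times u_j)|^2\, e^{-\frac{\pi}{2} r(t,\alpha,\beta_j)} \prod_{l=1}^{n}\prod_{k=1}^{n+1}(1+|t+\alpha_k+\beta_{l,j}|)^{-1/2}\,dt .
\end{equation*}
The analytic conductor of $L(s,F\times u_j)$ at $s=\tfrac12+it$ is $\asymp \prod_{l,k}(1+|t+\alpha_k+\beta_{l,j}|)$, so the generalized Lindel\"of Hypothesis gives $|L(\tfrac12+it,F\times u_j)|^2 \ll_\varepsilon \big(\prod_{l,k}(1+|t+\alpha_k+\beta_{l,j}|)\big)^{\varepsilon}$, and together with \eqref{eq:FirstfourierUB} this reduces the theorem to showing
\begin{equation*}
\sumstar_j |B_j(1)|^2 \intR \Big(\prod_{l,k}(1+|t+\alpha_k+\beta_{l,j}|)\Big)^{\varepsilon} e^{-\frac{\pi}{2} r(t,\alpha,\beta_j)} \prod_{l,k}(1+|t+\alpha_k+\beta_{l,j}|)^{-1/2}\,dt \ll \lambda(F)^{\varepsilon}.
\end{equation*}

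Next I would localize using the size of $r$. By Lemma \ref{lemma:rnonnegative}, $r\geq 0$; moreover $r(t,\alpha,\beta_j)$ is piecewise linear in $t$ with slope $\geq 2$ once $t$ leaves the median interval $I_M$, and by the closed form for $r$ on $I_M$ obtained in the proof of Lemma \ref{lemma:rzero} the quantity $\min_t r(t,\alpha,\beta_j)$ grows at a linear rate in $\|\beta_j\|$ once $\beta_j$ leaves a bounded neighborhood of the polytope \eqref{eq:alphajbetakcondition}, which is itself contained in $\|\beta\| \ll \|\alpha\| \ll \lambda(F)^{1/2}$. Hence on the region $r(t,\alpha,\beta_j) \geq A\log\lambda(F)$ one has $e^{-\frac{\pi}{2}r} \ll \lambda(F)^{-\pi A/2}$, while the remaining factors — the powers $\big(\prod(1+|t+\alpha_k+\beta_{l,j}|)\big)^{O(1)}$, the count of $u_j$ with $\|\beta_j\|\leq Y$, and $|B_j(1)|^2 \ll \mu(\beta_j)$, the last two controlled by the upper bound \eqref{eq:WLWLupper} — are polynomial in $\lambda(F)$ and in $|t|+\|\beta_j\|$, the latter in turn beaten by the exponential decay of $e^{-\frac{\pi}{2}r}$ as $|t|+\|\beta_j\|\to\infty$. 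Taking $A$ large enough, this region contributes $\ll\lambda(F)^{-1}$. On the complementary region $r(t,\alpha,\beta_j) < A\log\lambda(F)$ we have $|t|,\|\beta_j\|\ll\lambda(F)^{O(1)}$, so $\big(\prod(1+|t+\alpha_k+\beta_{l,j}|)\big)^{\varepsilon}\ll\lambda(F)^{\varepsilon}$ and $e^{-\frac{\pi}{2}r}\leq 1$, leaving
\begin{equation*}
\lambda(F)^{\varepsilon}\sumstar_j |B_j(1)|^2 \int_{r(t,\alpha,\beta_j)<A\log\lambda(F)} \prod_{l,k}(1+|t+\alpha_k+\beta_{l,j}|)^{-1/2}\,dt .
\end{equation*}
Since $r(t,\alpha,\beta_j)<A\log\lambda(F)$ forces $t$ to lie within $O(\log\lambda(F))$ of $I_M$ and $\beta_j$ within $O(\log\lambda(F))$ of the polytope \eqref{eq:alphajbetakcondition}, this last expression is bounded by running the argument of Lemma \ref{lemma:qUB} with a $\log\lambda(F)$-thickening of the interlacing inequalities \eqref{eq:tinterlacing} in place of the $O(1)$-thickening used there; the elementary estimates \eqref{eq:telementaryintegral} and \eqref{eq:hahb} then cost only an extra power of $\log\lambda(F)$, so this is $\ll\lambda(F)^{\varepsilon}$. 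Combining the three $\lambda(F)^{\varepsilon}$ losses (from Lindel\"of, from \eqref{eq:FirstfourierUB}, and from the last step) finishes the proof.

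The step I expect to demand the most care is the localization: one must confirm that $r(t,\alpha,\beta)$ really does grow linearly as $(t,\beta)$ leaves a bounded neighborhood of the polytope \eqref{eq:alphajbetakcondition} — so that the analytic conductor, otherwise unbounded, is polynomially bounded in $\lambda(F)$ precisely where the Lindel\"of bound is invoked — and that the upper bound \eqref{eq:WLWLupper} is strong enough to absorb the polynomial factors surviving the exponential cutoff. The genuinely combinatorial content — integrating $\prod(1+|t+\alpha_k+\beta_l|)^{-1/2}$ against $|B_j(1)|^2$ over the polytope — has already been done in Lemmas \ref{lemma:qtalphabetaUB} and \ref{lemma:qUB}, so beyond the localization the argument is bookkeeping.
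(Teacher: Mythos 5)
Your proposal is correct and follows essentially the same route as the paper: restrict $t$ and $\beta_j$ to a logarithmic thickening of $I_M$ and of the polytope \eqref{eq:alphajbetakcondition} using the exponential decay of $e^{-\frac{\pi}{2}r}$, apply Lindel\"of and \eqref{eq:FirstfourierUB}, and then run the argument of Lemma \ref{lemma:qUB}, losing only $\lambda(F)^{\varepsilon}$. The only (harmless) omission is the preliminary remark that one may assume $\beta_1 \geq \dots \geq \beta_n$ since the Langlands parameters are defined only up to permutation and $\mu(\beta)$ is permutation-invariant.
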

\begin{proof}
 This will follow from the proof of Lemma \ref{lemma:qUB} after some reductions.  The first thing to note is that since the spectral measure $\mu(\beta)$ is invariant under permutations of $(\beta_1, \dots, \beta_n)$, it suffices to consider the ordering as in Lemma \ref{lemma:rzero} (indeed, the Langlands parameters $(\beta_1, \dots, \beta_n)$ are only defined up to permutation anyway).

Next we need to finitize the integral and sum above.  If $I_M = [a,b]$ then let $I_M^* = [a-\log^2 \lambda(F), b + \log^2 \lambda(F)]$. We may restrict $t$ to $I_M^*$ since $q(t, \alpha, \beta)$ is exponentially small otherwise. Similarly we can restrict the $\beta_j$'s so that $\beta_{n+1-k} - \beta_{n+1-j} \leq \alpha_j - \alpha_{k+1} + \log^2(\lambda(F))$.  The Lindel\"{o}f Hypothesis implies that the $L$-functions are bounded by $\lambda(F)^{\varepsilon}$.  Then following the arguments of Lemma \ref{lemma:qUB}, we see that these slight extensions of the integral and sums, and the use of Lindel\"{o}f only alters the final bound by $\ll \lambda(F)^\varepsilon$.
\end{proof}

\begin{myprop}
 Assume the conditions of Theorem \ref{thm:normUB}.  Then
\begin{equation}
N_{\text{max}}(F) + N_{\text{min}}(F) 
\ll \lambda(F)^{\varepsilon}.
\end{equation}
\end{myprop}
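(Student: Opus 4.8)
The plan is to run the argument of Theorem~\ref{thm:normcuspUB} essentially verbatim, with the weighted local Weyl law \eqref{eq:WLWLupper} --- which was formulated precisely so as to include the continuous spectrum --- playing the role the purely cuspidal count played inside Lemma~\ref{lemma:qUB}. First I would record that, by Proposition~\ref{prop:normformula} together with Stirling's formula \eqref{eq:qtalphabetaSTIRLING} and switching $E_P$ with $\overline{E}_P$ as in \eqref{eq:normUBspectraldecomposition}, the term $N_{\text{max}}(F)$ equals, up to constants depending only on $n$,
\[
 |A_F(1)|^2 \sum_{n_1 + \dots + n_r = n} \sum_{\phi_{j_1}, \dots, \phi_{j_r}} c_{n_1, \dots, n_r} |B_{P_{n_1, \dots, n_r}}(1)|^2 \int_{\mr^{r-1}} \int_{\mr} |L(\tfrac12 + it, F \times \overline{E}_{P_{n_1, \dots, n_r}})|^2 q(t, \alpha, \beta) \, dt \, dv_1^* \cdots dv_{r-1}^*,
\]
with an analogous expression for $N_{\text{min}}(F)$ carrying the $(n-1)$-fold integral over $\beta_{1,w}, \dots, \beta_{n-1,w}$. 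Here $\beta$ is the full vector of Langlands parameters of the Eisenstein series, assembled from the constituent parameters and the shifts $v_k^*$ via \eqref{eq:LanglandsEisenstein}; by the assumed temperedness of all Maass forms on $\SL{m}{Z}$ with $2 \le m \le n$ these $\beta_l$ are real, so Lemmas~\ref{lemma:rnonnegative}, \ref{lemma:rzero} and \ref{lemma:qtalphabetaUB} apply unchanged with $\beta$ in place of $\beta_j$.

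I would then finitize exactly as in the proof of Theorem~\ref{thm:normcuspUB}: restrict $t$ to $I_M^* = [a - \log^2 \lambda(F), b + \log^2 \lambda(F)]$ (where $I_M = [a,b]$) and $\beta$ to within $\log^2 \lambda(F)$ of the box cut out by \eqref{eq:alphajbetakcondition}, the complement contributing an exponentially small amount. On this region the interlacing constraint forces $|t + \alpha_k + \beta_l| \ll \lambda(F)^{1/2 + \varepsilon}$ for all $k,l$, so the analytic conductor of $L(s, F \times \overline{E}_{P})$ at $s = \tfrac12 + it$ is $\ll \lambda(F)^{O_n(1)}$; factoring $L(s, F \times \overline{E}_P)$ into the genuine Rankin-Selberg $L$-functions $L(\cdot, F \times \overline{\phi_{j_k}})$ shifted by the $v_k^*$ (the standard $L$-function of $F$ when $n_k = 1$), the generalized Lindel\"{o}f hypothesis bounds it by $\lambda(F)^\varepsilon$. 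It then remains to prove, for each partition and for the minimal parabolic,
\[
 \sum_{\phi_{j_1}, \dots, \phi_{j_r}} c_{n_1, \dots, n_r} |B_{P_{n_1, \dots, n_r}}(1)|^2 \int_{v^*} \int_{t \in I_M} q(t, \alpha, \beta) \, dt \, dv^* \ll 1.
\]

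For this I would invoke Lemma~\ref{lemma:qtalphabetaUB} to write $q(t, \alpha, \beta) \ll \mu(\beta)^{-1} f(\beta + t)$ with $f$ as in the proof of Lemma~\ref{lemma:qUB}, discretize $\beta$ to integer points $\gamma \in \mz^n$ on the hyperplane $u_1 + \dots + u_n = 0$, and use $f(\beta+t) \asymp f(\gamma + t)$ and $\mu(\beta) \asymp \mu(\gamma)$ on unit boxes. The sum over $\phi_{j_k}$ together with the $v^*$-integral of $|B_{P}(1)|^2$ over each such box is then $\ll \mu(\gamma)$ by \eqref{eq:WLWLupper} --- this is exactly where the continuous-spectrum terms there are used --- leaving $\sum_{\gamma} \int_t f(\gamma + t) \, dt$ over the interlacing box \eqref{eq:tinterlacing}, which by \eqref{eq:Sbound2} and \eqref{eq:hahb} is $\ll 1$. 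Summing over the $O_n(1)$ partitions of $n$ costs only a bounded factor, and the proposition follows.

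I expect the only real work, as opposed to transcription from Theorem~\ref{thm:normcuspUB}, to be bookkeeping: verifying via \eqref{eq:LanglandsEisenstein} that as the constituent cusp forms and the $v_k^*$ vary, $\beta$ sweeps out (with the correct density weight $|B_P(1)|^2$) a neighborhood of the right box, so that after the change of variables $\lambda_k = \gamma_k + t$ the combined domain in $(t, v^*, \gamma)$ reduces to the box appearing in \eqref{eq:Sbound2}; and confirming the factorization of $L(s, F \times \overline{E}_P)$ just used so that Lindel\"{o}f applies factor by factor. The conceptual content is the same as for $N_d(F)$; the genuine novelty is merely that the extra $v^*$-integrals force reliance on the full strength of \eqref{eq:WLWLupper}.
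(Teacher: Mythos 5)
Your proposal is correct and follows essentially the same route as the paper: write the Eisenstein contribution in the form \eqref{eq:normUBspectraldecomposition} with $\beta$ given by \eqref{eq:LanglandsEisenstein}, apply Lindel\"{o}f after finitizing as in Theorem \ref{thm:normcuspUB}, and bound the remaining Archimedean sum/integral by rerunning Lemma \ref{lemma:qUB} with the continuous-spectrum part of the weighted local Weyl law \eqref{eq:WLWLupper} absorbing the $v^*$-integrals of $|B_P(1)|^2$ over unit boxes. The paper states this very tersely ("the proof of Lemma \ref{lemma:qUB} carries over almost without change"), and your write-up simply supplies the same bookkeeping, including the factorization of $L(s, F\times\overline{E}_P)$ needed to invoke Lindel\"{o}f, so there is nothing genuinely different to compare.
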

\begin{proof}
We have that the contribution of a $E_{P_{n_1, \dots, n_r}}$ to \eqref{eq:normUBspectraldecomposition} is of the form
\begin{equation}
\ll |A_F(1)|^2  \sum_{k=1}^{r} \sum_{\substack{SL_{n_k}(\mathbb{Z})} }  \int |B_{P_{n_1, \dots, n_r}}(1)|^2 |L(1/2+it, F \times E_{P_{n_1, \dots, n_r}})|^2 q_v(t, \alpha, \beta) dt dv_1^* \dots dv_{r-1}^*,
\end{equation}
where $q_v(t,\alpha, \beta)$ is given by \eqref{eq:qtalphabetaSTIRLING} but with
$\beta$ defined by \eqref{eq:LanglandsEisenstein}, that is,
\begin{equation}
\beta = (v_1^* + \beta_{j_1,1}, \dots, v_1^*+ \beta_{j_1, n_1} \big| v_2^* + \beta_{j_2, n_1 + 1}, \dots, v_2^* + \beta_{j_2, n_1 + n_2} \big| \dots). 
\end{equation}
By the same reasoning as in the proof of Theorem \ref{thm:normcuspUB}, a bound 
of $\lambda(F)^{\varepsilon}$ for 
\begin{equation}
  \sum_{k=1}^{r} \sum_{SL_{n_k}(\mathbb{Z})} \int_{\mr^{r-1}}  \int_{t \in I_M} |B_{P_{n_1, \dots, n_r}}(1)|^2  q_v(t, \alpha, \beta) dt dv_1^* \dots dv_{r-1}^*
\end{equation}
will carry over to $N_{max}(F)$.  The proof of Lemma \ref{lemma:qUB} carries over almost without change.

%
%

The case of the minimal Eisenstein series is the easiest of all and we omit it.
\end{proof}

\section{A polytope}
\label{section:polytope}
This section is self-contained and our notation may not agree with other sections of the paper.

Let $\alpha, \beta$ be as in Lemma \ref{lemma:rzero}, suppose $t \in I_M$, and suppose that \eqref{eq:alphajbetakcondition} holds.  We think of $\alpha$ as given (it comes from the Langlands parameters of $F$) and we wish to understand the set of $\beta$ and $t$ such that \eqref{eq:alphajbetakcondition} holds with $t \in I_M$.
Without some extra work, it is not obvious that there even exists such $\beta$.
Let $x_j = \beta_j - \beta_{j+1}$ and $y_j = \alpha_{n+1-j}-\alpha_{n+2-j}$ so $x_j, y_j \geq 0$.  Then the system of inequalities \eqref{eq:alphajbetakcondition} is equivalent to the following system
\begin{equation}
\label{eq:system}
y_{j+1} + \dots + y_{k-1} \leq x_j + \dots + x_{k-1} \leq y_j + \dots + y_k,
\end{equation}
for $1 \leq j < k \leq n$.  We use the standard convention that if $j+1 > k-1$ then the left hand side of \eqref{eq:system} denotes $0$.

\begin{figure}[h]
\input{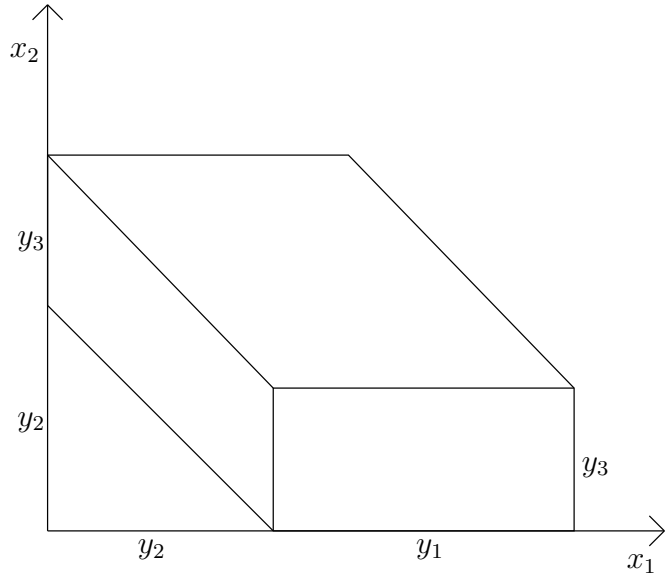}
\caption{The polytope $\mathcal{P}$ for $n=3$}
\label{fig:polytope}
\end{figure}
This defines a convex polytope in $\mathbb{R}^{n-1}$ which we denote as $\mathcal{P} = \mathcal{P}(y)$.  We suppose that none of the $y_j = 0$ since this is a somewhat degenerate situation. 
The description of $\mathcal{P}$ via \eqref{eq:system} is not conducive for our analysis.  Instead, we have a decomposition of $\mathcal{P}$ into $n$ parallelohedra each with a quite simple description.  Let $x = (x_1, \dots, x_{n-1})$, $w = (y_2, \dots, y_n)$, $v_1 = (1, 0, \dots, 0)$, $v_2 = (-1, 1, 0, \dots, 0)$, $v_3 = (0, -1, 1, 0, \dots, 0), \dots, v_{n-1} = (0, \dots, 0, -1, 1)$, $v_n = (0, \dots, 0, -1)$; so $v_2, \dots, v_{n-1}$ follow a pattern while the endpoint terms $v_1$ and $v_n$ have a different rule. 
\begin{mytheo}
\label{thm:Q=P}
 Define $\mathcal{Q}$ to be the convex polytope defined by the set of $x \in \mathbb{R}^{n-1}$ such that
\begin{equation}
\label{eq:polytopedecomposition}
  x = w +  t_1 y_1 v_1 + t_2 y_2 v_2 + \dots + t_n y_n v_n,
\end{equation}
for $0 \leq t_1, \dots, t_n \leq 1$.  Similarly, for each $j \in \{1, \dots, n \}$, let $\mathcal{Q}_j \subset \mathcal{Q}$ denote the parallelohedron defined by
\begin{equation}
 x = w + t_1 y_1 v_1 + \dots + t_{j-1} y_{j-1} v_{j-1} + t_{j+1} y_{j+1} v_{j+1} + \dots + t_n y_n v_n,
\end{equation}
for $0 \leq t_1, \dots, t_n \leq 1$.  Then $\mathcal{Q} = \mathcal{P}$.  Moreover, $\cup_{j=1}^{n} \mathcal{Q}_j = \mathcal{Q}$, and $\mathcal{Q}_j \cap \mathcal{Q}_k$ is a subset of an $(n-2)$-dimensional cone in $\mathbb{R}^{n-1}$ for $j \neq k$.
\end{mytheo}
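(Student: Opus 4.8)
The plan is to work directly with the parametrization \eqref{eq:polytopedecomposition} and to exploit the single linear relation $v_1 + v_2 + \dots + v_n = 0$, which one reads off at once from the definitions of the $v_i$. Since $v_1, \dots, v_{n-1}$ visibly form a basis of $\mathbb{R}^{n-1}$ (they are the columns of an upper bidiagonal matrix with $1$'s on the diagonal) and the relation space among the $v_i$ is one-dimensional, spanned by $(1, \dots, 1)$, deleting any single $v_j$ leaves a basis; so each $\mathcal{Q}_j$ is a genuine full-dimensional parallelepiped when the $y_i$ are nonzero. The first step is the coordinate identity: from \eqref{eq:polytopedecomposition} the $m$-th coordinate of a point of $\mathcal{Q}$ equals $x_m = y_{m+1} + t_m y_m - t_{m+1} y_{m+1}$, whence a telescoping sum gives $\sum_{m=j}^{k-1} x_m = (y_{j+1} + \dots + y_k) + t_j y_j - t_k y_k$ for $1 \le j < k \le n$. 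Since $0 \le t_i \le 1$ and $y_i \ge 0$, the two-sided bound \eqref{eq:system} is immediate, so $\mathcal{Q} \subseteq \mathcal{P}$.

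For the reverse inclusion, given $x \in \mathcal{P}$ I would solve for the parameters. Setting $s_m = t_m y_m$, the coordinate identity becomes the recursion $s_m - s_{m+1} = x_m - y_{m+1}$, which telescopes to $s_j = s_n + A_j - B_j$, where $A_j = \sum_{m=j}^{n-1} x_m$ and $B_j = y_{j+1} + \dots + y_n$ (empty sums being $0$); thus everything is governed by the free parameter $s_n$. The constraints $0 \le s_j \le y_j$ for $j = 1, \dots, n$ become $B_j - A_j \le s_n \le B_j - A_j + y_j$ for each $j$, and a common admissible $s_n$ exists precisely when $B_j - A_j \le B_{j'} - A_{j'} + y_{j'}$ for all pairs $j, j'$. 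Expanding this inequality shows that each such comparison (for $j \ne j'$; the case $j = j'$ being trivial) is exactly one of the inequalities in \eqref{eq:system}, split according to which of $j, j'$ is smaller. Hence an admissible $s_n$ exists, giving $x \in \mathcal{Q}$ and therefore $\mathcal{Q} = \mathcal{P}$.

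For $\bigcup_j \mathcal{Q}_j = \mathcal{Q}$, write any $x \in \mathcal{Q}$ as $x = w + \sum_{i=1}^n u_i v_i$ with $u_i = t_i y_i \in [0, y_i]$, let $c = \min_i u_i$, and let $j$ attain the minimum; subtracting $c(v_1 + \dots + v_n) = 0$ gives $x = w + \sum_{i \ne j} (u_i - c) v_i$ with $0 \le u_i - c \le u_i \le y_i$, so $x \in \mathcal{Q}_j$, while the reverse inclusion $\mathcal{Q}_j \subseteq \mathcal{Q}$ is the case $t_j = 0$. For the intersections, if $x \in \mathcal{Q}_j \cap \mathcal{Q}_k$ with $j \ne k$, then writing $x$ with coefficients $(u_i)$ vanishing at $i = j$ and also with coefficients $(u_i')$ vanishing at $i = k$, the relation $\sum_i (u_i - u_i') v_i = 0$ forces $u_i - u_i' = c$ for a single constant $c$ since the relation space is one-dimensional; evaluating at $i = j$ gives $c = -u_j' \le 0$ and at $i = k$ gives $c = u_k \ge 0$, so $c = 0$ and $u_j = u_k = 0$. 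Thus $x$ lies in $w + \{\sum_{i \ne j, k} u_i v_i : u_i \ge 0\}$, an $(n-2)$-dimensional cone, as claimed.

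The bookkeeping in the first step and the trivial inclusions are routine; the one point requiring real care is the reverse inclusion $\mathcal{P} \subseteq \mathcal{Q}$, i.e. checking that the interval of admissible $s_n$ is nonempty --- this is where the full strength of \eqref{eq:system} enters, and one must verify that every one of the (quadratically many) ``lower bound $\le$ upper bound'' comparisons is accounted for by one of the inequalities of \eqref{eq:system}.
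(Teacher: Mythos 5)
Your argument is correct, and most of it runs parallel to the paper: the inclusion $\mathcal{Q} \subseteq \mathcal{P}$ via the telescoped coordinate identity is exactly the paper's \eqref{eq:xjtj}, and your treatment of $\bigcup_j \mathcal{Q}_j = \mathcal{Q}$ and of $\mathcal{Q}_j \cap \mathcal{Q}_k$ (shifting along the one-dimensional relation $v_1 + \dots + v_n = 0$, with your $c = \min_i u_i$ playing the role of the paper's $q = \max_i(-u_i)$, and forcing $u_j = u_k = 0$ on the overlap) is the same argument in mildly different clothing. Where you genuinely depart from the paper is the reverse inclusion $\mathcal{P} \subseteq \mathcal{Q}$, precisely the step the paper declares ``tricky to do directly'': the paper proves it indirectly, by showing every facet of $\mathcal{P}$ coincides with a facet $\mathcal{F}_{j_0,k_0}$ of some $\mathcal{Q}_{j_0}$ (a case analysis on the parametrization \eqref{eq:xjtj0left}--\eqref{eq:xjtj0right}, plus the symmetry $x_j \mapsto y_j + y_{j+1} - x_j$ to handle opposite facets) and then invoking that $\mathcal{P}$ is the convex hull of its facets. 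You instead solve the parametrization outright: with $s_m = t_m y_m$ the recursion $s_m - s_{m+1} = x_m - y_{m+1}$ leaves the single free parameter $s_n$, the constraints $0 \le s_j \le y_j$ become the intervals $\left[ B_j - A_j, \, B_j - A_j + y_j \right]$, and the pairwise compatibility conditions $B_j - A_j \le B_{j'} - A_{j'} + y_{j'}$ expand (for $j < j'$ and $j > j'$ respectively) into exactly the two families of inequalities in \eqref{eq:system}, so one-dimensional Helly (max of left endpoints at most min of right endpoints) produces an admissible $s_n$; I checked this expansion and it is as you claim. Your route is shorter, avoids both the convex-hull-of-facets appeal and the opposite-facet symmetry, and in fact exhibits $\mathcal{P} = \mathcal{Q}$ as a single equivalence (feasibility of $s_n$ iff \eqref{eq:system}); the paper's facet-matching route is longer but has the incidental benefit of explicitly identifying the facet structure of $\mathcal{P}$ in terms of the parallelohedra $\mathcal{Q}_{j_0}$. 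Both proofs are complete under the paper's standing assumption that no $y_j$ vanishes, which you use (as the paper does) to pass from $s_j$ back to $t_j \in [0,1]$.
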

Figure \ref{fig:polytope} illustrates Theorem \ref{thm:Q=P}, the three parallelograms being $\mathcal{Q}_j$ for $j \in \{1,2,3\}$.  The inner vertex is $w = (y_2, y_3)$.

\begin{mycoro}
The polytope $\mathcal{P}$ is a zonotope (i.e., a Minkowski sum of line segments).
\end{mycoro}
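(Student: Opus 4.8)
The plan is to deduce the corollary directly from Theorem \ref{thm:Q=P}. Since Theorem \ref{thm:Q=P} identifies $\mathcal{P}$ with $\mathcal{Q}$, it suffices to show that $\mathcal{Q}$ is a zonotope. But the defining equation \eqref{eq:polytopedecomposition} already exhibits $\mathcal{Q}$ in the precise form of a Minkowski sum: as $t_1, \dots, t_n$ range independently over $[0,1]$, the set of points $w + t_1 y_1 v_1 + \dots + t_n y_n v_n$ is exactly the translate by $w$ of the Minkowski sum of the $n$ line segments $S_j = \{t_j y_j v_j : 0 \leq t_j \leq 1\}$, $j = 1, \dots, n$. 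Each $S_j$ is a line segment from the origin to $y_j v_j$ (a genuine, possibly degenerate, segment), so $\mathcal{Q} = w + S_1 + \dots + S_n$ is by definition a zonotope.

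Concretely, I would write: by Theorem \ref{thm:Q=P} we have $\mathcal{P} = \mathcal{Q}$, and by its defining relation \eqref{eq:polytopedecomposition},
\begin{equation}
\mathcal{Q} = w + \sum_{j=1}^{n} \big\{ t\, y_j v_j : 0 \leq t \leq 1 \big\},
\end{equation}
which is a translate of a Minkowski sum of $n$ line segments, hence a zonotope. (A translate of a zonotope is a zonotope, or one may absorb $w$ by noting it equals a sum of segment endpoints; either way the conclusion is immediate.) One should perhaps remark that the generating segments point in the directions $v_1, \dots, v_n$, which are (up to sign) the differences of consecutive standard basis vectors of $\mathbb{R}^n$ restricted to the hyperplane $H$, i.e. the positive roots $e_j - e_{j+1}$ of the $A_{n-1}$ root system — this is the ``affine projection of the $A_n$ lattice'' alluded to in the introduction, though that identification is not needed for the corollary itself.

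There is essentially no obstacle here: the corollary is a one-line consequence of the explicit parametrization already furnished by Theorem \ref{thm:Q=P}, and all the real work — namely proving $\mathcal{Q} = \mathcal{P}$ and that the $\mathcal{Q}_j$ tile $\mathcal{Q}$ with overlaps of lower dimension — is contained in the theorem. The only point deserving a word of care is the trivial observation that translating by the fixed vector $w$ does not affect the property of being a zonotope (Minkowski sum of segments), since one can either include a degenerate segment or simply note that zonotopes are translation-invariant by definition.
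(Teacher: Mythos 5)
Your proposal is correct and takes essentially the same route as the paper, which likewise notes that the parametrization \eqref{eq:polytopedecomposition} of $\mathcal{Q}$, combined with Theorem \ref{thm:Q=P}, is exactly the standard definition of a zonotope (Minkowski sum of segments, up to the translation by $w$). The only substance lies in Theorem \ref{thm:Q=P} itself, as you observe.
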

This is obvious because the definition of $\mathcal{Q}$ via \eqref{eq:polytopedecomposition} is precisely one of the standard definitions of a zonotope.  See Lecture 7 of \cite{Ziegler} for more information on zonotopes.  An alternate definition of a zonotope is the image of a cube under an affine projection (see Definition 7.13 of \cite{Ziegler}); we give a concrete description of this in Remark \ref{remark:An} below.

\begin{mycoro}
\label{coro:polytopevolume}
The volume of the polytope $\mathcal{P}$ defined by \eqref{eq:system} is
\begin{equation}
\label{eq:volumepolynomial}
\sum_{j=1}^{n} y_1 \dots y_{j-1} y_{j+1} \dots y_n.
\end{equation}
\end{mycoro}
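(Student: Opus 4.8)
The plan is to obtain the corollary directly from Theorem \ref{thm:Q=P}. That theorem identifies $\mathcal{P}$ with $\mathcal{Q}$ and exhibits $\mathcal{Q} = \bigcup_{j=1}^{n}\mathcal{Q}_j$, where each overlap $\mathcal{Q}_j \cap \mathcal{Q}_k$ with $j\neq k$ sits inside an $(n-2)$-dimensional cone and hence is a Lebesgue-null set in $\mr^{n-1}$. Consequently $\mathrm{vol}(\mathcal{P}) = \sum_{j=1}^{n}\mathrm{vol}(\mathcal{Q}_j)$, and the whole problem reduces to computing the volume of each parallelepiped $\mathcal{Q}_j$.

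By construction $\mathcal{Q}_j$ is a translate by $w$ of the parallelepiped in $\mr^{n-1}$ spanned by the $n-1$ vectors $y_1 v_1, \dots, y_{j-1} v_{j-1}, y_{j+1} v_{j+1}, \dots, y_n v_n$. Hence $\mathrm{vol}(\mathcal{Q}_j) = \big(\prod_{k\neq j} y_k\big)\,|\det M_j|$, where $M_j$ is the $(n-1)\times(n-1)$ matrix whose columns are $v_1, \dots, \widehat{v_j}, \dots, v_n$ (the hat denoting omission of the $j$-th vector). So it suffices to show $|\det M_j| = 1$ for every $j \in \{1,\dots,n\}$, which then gives $\mathrm{vol}(\mathcal{Q}_j) = y_1 \cdots y_{j-1} y_{j+1} \cdots y_n$ and the claimed formula upon summing over $j$.

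To verify the determinant claim, write $e_1, \dots, e_{n-1}$ for the standard basis, so that $v_1 = e_1$, $v_k = e_k - e_{k-1}$ for $2 \le k \le n-1$, and $v_n = -e_{n-1}$ (and note as a sanity check that $v_1 + \dots + v_n = 0$). The columns $v_1, \dots, v_{j-1}$ are supported on the coordinates $e_1, \dots, e_{j-1}$, while $v_{j+1}, \dots, v_n$ are supported on $e_j, \dots, e_{n-1}$; thus, with rows and columns already grouped in this order, $M_j$ is block diagonal. Its upper-left $(j-1)\times(j-1)$ block (columns $v_1,\dots,v_{j-1}$) is lower bidiagonal with all diagonal entries $1$, and its lower-right $(n-j)\times(n-j)$ block (columns $v_{j+1},\dots,v_n$) is lower triangular with all diagonal entries $-1$; each block therefore has determinant $\pm 1$, and $|\det M_j| = 1$ follows (with the convention that an empty block has determinant $1$, covering $j=1$ and $j=n$).

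I do not expect a genuine obstacle here: essentially all the geometric content is already packaged in Theorem \ref{thm:Q=P}, and what remains is the elementary determinant evaluation above. The one place deserving a sentence of care is the passage from $\mathrm{vol}(\bigcup_j \mathcal{Q}_j)$ to $\sum_j \mathrm{vol}(\mathcal{Q}_j)$, which is legitimate precisely because the pairwise intersections lie in $(n-2)$-dimensional cones — exactly the last assertion of Theorem \ref{thm:Q=P}.
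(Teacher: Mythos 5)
Your proposal is correct and follows essentially the same route as the paper: identify $\mathcal{P}$ with $\mathcal{Q}=\bigcup_{j=1}^{n}\mathcal{Q}_j$ via Theorem \ref{thm:Q=P}, use that the pairwise overlaps lie in $(n-2)$-dimensional cones and hence are null, and compute each $\mathcal{Q}_j$'s volume as the absolute determinant of $(y_1v_1,\dots,y_{j-1}v_{j-1},y_{j+1}v_{j+1},\dots,y_nv_n)$, namely $y_1\cdots y_{j-1}y_{j+1}\cdots y_n$. The only quibble is that your first block is upper rather than lower bidiagonal, which does not affect the determinant being $\pm 1$.
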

It is easy to express the volume of $\mathcal{Q}_j$ as the absolute value of the determinant of $(y_1 v_1^T, \dots, y_{j-1} v_{j-1}^T, y_{j+1}  v_{j+1}^T, \dots, y_n v_n^T)$ which is easily calculated to be $y_1 \dots y_{j-1} y_{j+1} \dots y_n$.

In general it is difficult to find the volume of a polytope defined parametrically as in \eqref{eq:system}.  There are formulas for the volume of a polytope if one knows the vertex set and its connecting edges but since our polytope is defined by half-planes this information is not given to us directly.
We also observe that the polynomial \eqref{eq:volumepolynomial} equals the Schur polynomial $S_{\lambda}(y_1, \dots, y_n)$ associated to the partition $\lambda = (1,1, \dots, 1, 0) \in \mathbb{Z}^n$.
  
\begin{myremark}
\label{remark:An}
 The polytope $\mathcal{P}$ has a relation to the $A_{n}$ lattice as we explain.  Recall the definition of the $A_{n}$ lattice as
\begin{equation}
 A_{n} = \{ (x_0, x_1, \dots, x_{n}) \in \mathbb{Z}^{n+1} : x_0 + \dots + x_{n} = 0 \},
\end{equation}
which defines an $n$-dimensional lattice in $\mathbb{R}^{n+1}$; see \cite{ConwaySloane}, Chapter 4.6.1.  It is the root lattice for $\mathfrak{sl}_{n+1}$.  It has generator matrix 
\begin{equation}
M= \begin{pmatrix}
   -1 & 1 & 0 & 0 & \dots & 0 & 0 \\
   0 & -1 & 1 & 0 & \dots & 0 & 0 \\
  0 & 0 & -1 & 1 & \dots & 0 & 0 \\
 \cdot & \cdot & \cdot & \cdot & \dots & \cdot & \cdot \\
0 & 0 & 0 & 0 & \dots & -1 & 1
 \end{pmatrix},
\end{equation}
where the $n$ rows, say $w_1, \dots, w_n$, respectively, are generators for the lattice.  A fundamental domain for $A_n$ inside the hyperplane $x_0 + \dots + x_n = 0$ is then $\{ t_1 w_1 + \dots + t_n w_n : 0 \leq t_j \leq 1 \text{ for all $j$} \}$.  Stretching the vector $w_j$ by $y_j$ for each $j$, we obtain the region $\mathcal{R}$ of the form 
\begin{equation}
\mathcal{R} := \{t_1 y_1 w_1 + \dots + t_n y_n w_n : 0 \leq t_j \leq 1 \text{ for all $j$} \}.
\end{equation}
Then $\mathcal{Q}-w$ is the projection of $\mathcal{R}$ via $x_0 = x_n = 0$, as can be seen since deleting the first and last columns of $M$ gives a matrix whose rows are $v_1, \dots, v_n$.  Note that this projection reduces the dimension of $\mathcal{R}$ by one.
\end{myremark}

\begin{proof}[Proof of Theorem \ref{thm:Q=P}]
First we show $\mathcal{Q} \subset \mathcal{P}$.
We can write \eqref{eq:polytopedecomposition} in the form
\begin{equation}
\label{eq:polytopematrixformula}
 \begin{pmatrix}
  x_1 \\ x_2 \\ \vdots \\ x_{n-1}
 \end{pmatrix}
= \begin{pmatrix}
   t_1 & 1- t_2 &  && \dots &  \\
  & t_2 & 1-t_3 &  & \dots &  \\
  &  & t_3 & 1-t_4 & \dots &  \\
 \vdots & & &\ddots  & & \\
  &  & & & t_{n-1} & 1-t_n
  \end{pmatrix}
\begin{pmatrix}
  y_1 \\ y_2 \\ \vdots \\ y_{n}
 \end{pmatrix}.
\end{equation}
By adding consecutive rows of the matrix, one sees that
\begin{equation}
\label{eq:xjtj}
 x_j + \dots + x_{k-1} = t_j y_j + (y_{j+1} + \dots + y_{k-1}) + (1-t_{k}) y_k,
\end{equation}
so that if $0 \leq t_1, \dots, t_n \leq 1$ then \eqref{eq:system} holds.  That is, $\mathcal{Q} \subset \mathcal{P}$.

Next we explain why $\cup_{j=1}^{n} \mathcal{Q}_j = \mathcal{Q}$ and $\mathcal{Q}_j \cap \mathcal{Q}_k$ is a subset of an $(n-2)$-dimensional cone in $\mathbb{R}^{n-1}$ for $j \neq k$.
Let $e_1,
\dots, e_{n-1}$ denote the standard basis vectors of $\mathbb{R}^{n-1}$.  Then \eqref{eq:polytopedecomposition} is the same as
\begin{equation}
\label{eq:cones}
x - w = 
u_1 w_1 + \dots + u_n w_n,
\end{equation}
where $u_j = t_j y_j$, $w_1 = e_1, w_2 = e_2-e_1, \dots, w_{n-1} = e_n - e_{n-1}, w_n = -e_n.$  Observe that $w_1 + \dots + w_{n} = 0$, and any set of $n-1$ $w_j$'s is a basis for $\mathbb{R}^{n-1}$.
For each $j \in \{1, \dots, n \}$, let
\begin{equation}
C_j = \{ u_1 w_1 + \dots + u_{j-1} w_{j-1} + u_{j+1} w_{j+1} + \dots + u_n w_n : u_k \geq 0 \text{ for all $k$} \}.
\end{equation}
Then $C_j$ is a cone in $\mathbb{R}^{n-1}$.  We claim that $\cup_{j=1}^{n} C_j = \mathbb{R}^{n-1}$ and $C_j \cap C_k$ is an $(n-2)$-dimensional cone for $j \neq k$.  To prove the claim, first suppose $v \in \mathbb{R}^{n-1}$, and express it (non-uniquely) as $v = u_1 w_1 + \dots + u_{n} w_{n}$ with $u_j \in \mathbb{R}$.  Then
\begin{equation}
\label{eq:vformula}
v = (u_1 + q) w_1 + \dots + (u_{n}+q) w_{n},
\end{equation}
for any $q \in \mathbb{R}$; choosing $q = \max(-u_1, -u_2, \dots, -u_n)$ gives $u_j + q \geq 0$ for all $j$, and $u_{j_0} + q = 0$ for some $j_0$, whence $v \in C_{j_0}$.  Next suppose $v \in C_j \cap C_k$ for some $j \neq k$.  By symmetry, suppose $j=n$, and $k=n-1$.  Then for some $u_j, u_j' \geq 0$, we have
\begin{align}
v = u_1 w_1 + \dots + u_{n-1} w_{n-1} &= u_1' w_1 + \dots + u_{n-2}' w_{n-2} + u_{n}' w_n,
\\
 &= (u_1' - u_n') w_1 + \dots + (u_{n-2}'-u_n') w_{n-2} - u_{n}' w_{n-1}.
\end{align}
Since $w_1, \dots w_{n-1}$ form a basis, we have in particular that $u_{n-1} = - u_n'$, but since $u_{n-1} \geq 0$ and $u_n' \geq 0$, we conclude $u_{n-1} = u_n' = 0$ and hence $v$ lies in the cone spanned by $w_1, \dots, w_{n-2}$.  Also, any element in this cone is an element of $C_n \cap C_{n-1}$.
The above claim immediately shows that $\mathcal{Q}_j \cap \mathcal{Q}_k$ lies in an $(n-2)$-dimensional cone for $j \neq k$, since $\mathcal{Q}_j \subset C_j$ for all $j$.  Also, the proof can be modified to show that $\cup_{j=1}^{n} \mathcal{Q}_j = \mathcal{Q}$: suppose $v \in \mathcal{Q}$ is given in the form \eqref{eq:vformula} with $0 \leq u_j \leq y_j$.  Then choosing $q = \max(-u_1, -u_2, \dots, -u_n)$ shows $v \in \mathcal{Q}_j$ for some $j$.


Finally, we need to show that $\mathcal{Q} = \mathcal{P}$.
It seems tricky to do this directly and our strategy is to show that every facet of $\mathcal{P}$ is contained in $\mathcal{Q}$; this then shows $\mathcal{P} \subset \mathcal{Q}$ because $\mathcal{P}$ is the convex hull of its facets (indeed, it is the convex hull of its vertices by the ``main theorem'' of polytopes: see Theorem 1.1 of \cite{Ziegler}).  More precisely, given $j_0, k_0 \in \{1, \dots, n \}$ with $j_0 \neq k_0$, consider the 
facet $\mathcal{F}_{j_0, k_0}$ of $\mathcal{Q}_{j_0}$ which equals the set of points of the form \eqref{eq:polytopedecomposition} with $t_{j_0} = 0$ and $t_{k_0} = 1$.  We will show that every facet of $\mathcal{P}$ equals $\mathcal{F}_{j_0, k_0}$ for some choice of $j_0, k_0$.  First we argue that $\mathcal{Q}_{j_0}$ is given by the following system of $n-1$ equations:
\begin{align}
\label{eq:xjtj0left}
 x_j + \dots + x_{j_0-1} &= t_j y_j + (y_{j+1} + \dots + y_{j_0}), \qquad 1 \leq j < j_0 \\
\label{eq:xjtj0right}
x_{j_0} + \dots + x_{k-1} &= y_{j_0+1} + \dots + y_{k-1} + (1-t_k) y_k, \qquad j_0 < k \leq n,
\end{align}
where $0 \leq t_j \leq 1$ for all $j \neq j_0$.  First we note that in general, \eqref{eq:xjtj} is equivalent to \eqref{eq:polytopedecomposition}.  Setting $t_{j_0} = 0$, and taking only \eqref{eq:xjtj} with $j=j_0$ or $k=j_0$, gives precisely the equations \eqref{eq:xjtj0left} and \eqref{eq:xjtj0right}.  This shows $\mathcal{Q}_{j_0}$ is contained in the set of solutions to \eqref{eq:xjtj0left} and \eqref{eq:xjtj0right}.  On the other hand, any solution to \eqref{eq:xjtj0left} and \eqref{eq:xjtj0right} uniquely determines $t_j$'s, $j \neq j_0$ with $0 \leq t_j \leq 1$, and therefore gives a solution to \eqref{eq:polytopematrixformula}.

Now suppose that $j_0 < k_0$, say.  If we impose the extra condition $t_{k_0} = 1$, then \eqref{eq:xjtj0left}-\eqref{eq:xjtj0right} show that $\mathcal{F}_{j_0, k_0}$ is given by the following system:
\begin{align}
\label{eq:xjtj0tk0left}
 x_j + \dots + x_{j_0-1} &= t_j y_j + (y_{j+1} + \dots + y_{j_0}), \qquad 1 \leq j < j_0 \\
\label{eq:xjtj0tk0middle}
x_{j_0} + \dots + x_{k-1} &= y_{j_0+1} + \dots + y_{k-1} + (1-t_k) y_k, \qquad j_0 < k < k_0, 
\\
\label{eq:xjtj0tk0face}
x_{j_0} + \dots + x_{k_0-1} &= y_{j_0+1} + \dots + y_{k_0-1}
\\
\label{eq:xjtj0tk0right}
x_{k_0} + \dots + x_{k-1} &= y_{k_0} + y_{k_0+1} + \dots + y_{k-1} + (1-t_k) y_k, \qquad k_0 < k \leq n.
\end{align}
Next we show that the facet of $\mathcal{P}$ defined by \eqref{eq:xjtj0tk0face} (which is indeed seen to be a facet by taking \eqref{eq:system} with $j=j_0, k=k_0$) is equivalent to the above system.  Suppose that $x$ satisfies \eqref{eq:system} and \eqref{eq:xjtj0tk0face}.  First we show \eqref{eq:xjtj0tk0left} with some $t_j \in [0,1]$ (the only issue is that $t_j$ lies in this interval).  It is immediate from \eqref{eq:system} that $x_j + \dots + x_{j_0-1} \leq y_{j} + \dots + y_{j_0}$ so $t_j \leq 1$.  To see $t_j \geq 0$, write $x_j + \dots + x_{j_0-1} = x_j + \dots + x_{k_0-1} - (x_{j_0} + \dots + x_{k_0-1})$ and apply \eqref{eq:system} and \eqref{eq:xjtj0tk0face} to these two terms, respectively, so that we see
\begin{equation}
 x_j + \dots + x_{j_0-1} \geq y_{j+1} + \dots + y_{k_0-1} - (y_{j_0+1} + \dots + y_{k_0-1}) = y_{j+1} + \dots + y_{j_0}.
\end{equation}
For \eqref{eq:xjtj0tk0middle}, we obtain $t_k \leq 1$ immediately from the lower bound in \eqref{eq:system} (with $j=j_0$).  The bound $t_k \geq 0$ follows by writing $x_{j_0} + \dots + x_{k-1} = x_{j_0} + \dots + x_{k_0-1} - (x_k + \dots + x_{k_0-1})$ and using \eqref{eq:xjtj0tk0face} and \eqref{eq:system}, respectively, to obtain
\begin{equation}
 x_{j_0} + \dots + x_{k-1} \leq y_{j_0+1} + \dots + y_{k_0-1} - (y_{k+1} + \dots + y_{k_0-1}) = y_{j_0+1} + \dots + y_{k}.
\end{equation}
The final case of \eqref{eq:xjtj0tk0right} is similar.  The condition $t_k \geq 0$ is immediate from \eqref{eq:system}.  The upper bound $t_k \leq 1$ uses $x_{k_0} + \dots + x_{k-1} = x_{j_0} + \dots + x_{k-1} - (x_{j_0} + \dots + x_{k_0-1})$ and \eqref{eq:system} and \eqref{eq:xjtj0tk0face}, respectively, to obtain
\begin{equation}
 x_{k_0} + \dots + x_{k-1} \geq y_{j_0+1} + \dots + y_{k-1} - (y_{j_0+1} + \dots + y_{k_0-1}) = y_{k_0} + \dots + y_{k-1}.
\end{equation}
We have thus shown that any point $x \in \mathcal{P}$ that lies on the facet \eqref{eq:xjtj0tk0face} lies in $\mathcal{F}_{j_0, k_0}$.  So far we have not treated the opposite facet $x_{j_0} + \dots x_{k_0-1} = y_{j_0} + \dots + y_{k_0}$, but a symmetry argument suffices here as we now explain.  One can check that the change of variables $x_j = y_j + y_{j+1} - x_j'$, applied to the system \eqref{eq:system}, leads to the same system \eqref{eq:system} (in terms of the new variables $x_j'$) but with each of the opposite facets reversed.  This change of variables, combined with $t_j = 1 - t_j'$, also leaves \eqref{eq:polytopedecomposition} invariant.  Thus the opposite facets of $\mathcal{P}$ also occur as facets of $\mathcal{Q}_{j_0}$.
\end{proof}

\section{The lower bound}
\label{section:lowerbound}
\subsection{}
\label{subsection:lowerbound}
The lower bound for $N_d(F)$ is, in a combinatorial sense, much more difficult than the upper bound (ignoring the major assumption of Lindel\"{o}f which is required for the upper bound).  The difference is that we need much more refined information about the spectral sum in \eqref{eq:normUBspectraldecomposition}.  This was largely obtained in Section \ref{section:polytope}.

\begin{myprop}
\label{prop:normLBwithoutLfunction}
 Assume \eqref{eq:WLWLlower} holds and $|\alpha_j - \alpha_k| \geq \lambda(F)^{\varepsilon}$ for all $j \neq k$.  Then
\begin{equation}
 \sumstar_j |B_j(1)|^2 \intR q(t, \alpha, \beta_j) dt \gg 1.
\end{equation}
\end{myprop}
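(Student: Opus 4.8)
The plan is to discard everything in the spectral sum except a chunk of the $\SL{n}{Z}$ cuspidal spectrum whose Langlands parameters lie in the polytope $\mathcal{P}$ of Section \ref{section:polytope}, and then to show that this chunk alone contributes $\gg 1$. Since $q\ge 0$ by Lemma \ref{lemma:rnonnegative}, and since both $\mu$ and the summand are symmetric in the components of $\beta_j$, I would first keep only those even $u_j$ whose decreasingly ordered parameters satisfy \eqref{eq:alphajbetakcondition} (equivalently, lie in $\mathcal{P}$) and replace $\intR$ by $\int_{I_M}$. By Lemma \ref{lemma:rzero} one then has $r(t,\alpha,\beta_j)=0$ on this region, so by \eqref{eq:qtalphabetaSTIRLING}
\begin{equation*}
q(t,\alpha,\beta_j) \asymp \prod_{l=1}^{n}\prod_{k=1}^{n+1}\big(1+|t+\alpha_k+\beta_{l,j}|\big)^{-1/2}, \qquad \text{for } t \in I_M .
\end{equation*}

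Next I would feed in the weighted local Weyl law. Grouping the surviving $\beta_j$ by the nearest point $\gamma$ of the lattice $\mz^n\cap H$, where $H$ is the hyperplane $u_1+\dots+u_n=0$, and noting that over a ball of radius $K$ the quantities $\mu(\cdot)$, the Stirling product above, and (up to an $O(K)$ shift) the interval $I_M$ all change by only bounded factors, one shrinks $\mathcal{P}$ and $I_M$ by $O(K)$ so that entire such balls remain admissible and then applies \eqref{eq:WLWLlower}, obtaining
\begin{equation*}
\sumstar_j |B_j(1)|^2 \intR q(t,\alpha,\beta_j)\, dt \;\gg\; \sum_{\gamma\in\mathcal{P}'\cap\mz^n} \mu(\gamma) \int_{I_M'(\gamma)} \prod_{l,k}\big(1+|t+\alpha_k+\gamma_l|\big)^{-1/2}\, dt ,
\end{equation*}
with $\mathcal{P}'$, $I_M'$ the shrunk versions. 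As the summand is log-Lipschitz in $\gamma$, this sum dominates a constant times the corresponding integral over a slightly smaller polytope; passing to the variables $\lambda_l=\beta_l+t$ (a linear change with constant Jacobian $n$ relative to $(\beta_1,\dots,\beta_{n-1},t)$) and using the translation invariance of $\mu$, the problem reduces to proving
\begin{equation}\label{eq:planreduced}
\int_{B} \mu(\lambda) \prod_{l=1}^{n}\prod_{k=1}^{n+1}\big(1+|\alpha_k+\lambda_l|\big)^{-1/2}\, d\lambda \;\gg\; 1 ,
\end{equation}
where, by \eqref{eq:interlacing}, $B$ is the box $\prod_{l=1}^{n}[-\alpha_{n+1-l}+c,\,-\alpha_{n+2-l}-c]$ for a suitable $c=c(n)$; the spacing hypothesis $|\alpha_j-\alpha_k|\ge\lambda(F)^{\varepsilon}$ makes each edge of length $\asymp y_l:=\alpha_{n+1-l}-\alpha_{n+2-l}\gg 1$.

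For \eqref{eq:planreduced} I would restrict $B$ to the central sub-box on which each $\lambda_l$ stays in the middle portion of its interval, so that $|\alpha_{n+1-l}+\lambda_l|\asymp|\alpha_{n+2-l}+\lambda_l|\asymp y_l$ for every $l$, while every remaining factor $(1+|\alpha_k+\lambda_l|)^{-1/2}$ (those with $k+l\le n$ or $k+l\ge n+3$) and $\mu(\lambda)$ vary by only bounded factors (as in the proof of Lemma \ref{lemma:qUB}, using $0\le\lambda_l+\alpha_{n+1-l}\le y_l$). The volume $\asymp\prod_l y_l$ of this sub-box cancels the $2n$ diagonal factors, which together contribute $\asymp\prod_l y_l^{-1}$, so \eqref{eq:planreduced} reduces to the midpoint estimate (with $\lambda_l^0:=-\tfrac12(\alpha_{n+1-l}+\alpha_{n+2-l})$ and $\lambda^0=(\lambda_1^0,\dots,\lambda_n^0)$)
\begin{equation}\label{eq:planfinal}
\mu(\lambda^0) \prod_{k+l\le n}\big(1+|\alpha_k+\lambda_l^0|\big)^{-1/2} \prod_{k+l\ge n+3}\big(1+|\alpha_k+\lambda_l^0|\big)^{-1/2} \;\asymp\; 1 .
\end{equation}
Writing $\alpha_k+\lambda_l^0=(\lambda_l^0-\lambda_{n+1-k}^0)+\tfrac12 y_{n+1-k}$ exactly as in Lemma \ref{lemma:qtalphabetaUB}, one checks that for $k+l\le n$ the correction $\tfrac12 y_{n+1-k}=\tfrac12(\alpha_k-\alpha_{k+1})$ never exceeds the leading term $\lambda_l^0-\lambda_{n+1-k}^0$, which is the inequality $\alpha_{k+1}\ge\tfrac12(\alpha_{n+1-l}+\alpha_{n+2-l})$ and is automatic from $k+1\le n+1-l$; hence $\prod_{k+l\le n}(1+|\alpha_k+\lambda_l^0|)\asymp\prod_{k+l\le n}(1+(\lambda_l^0-\lambda_{n+1-k}^0))=\mu(\lambda^0)$ after the reindexing used in that lemma, and symmetrically below the line, which gives \eqref{eq:planfinal}.

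The main obstacle is \eqref{eq:planfinal}: recognising that the off-diagonal product, evaluated at the centre of the box rather than near one of its facets, exactly reconstitutes $\mu(\lambda^0)^2$. This is the combinatorial content that the one-sided bound of Lemma \ref{lemma:qtalphabetaUB} is too lossy to provide; it is where the zonotope structure of $\mathcal{P}$ and the explicit facet description from the proof of Theorem \ref{thm:Q=P}, together with the volume formula of Corollary \ref{coro:polytopevolume} (which is what makes the volume-versus-diagonal cancellation in the previous step transparent for general $y$), carry the argument. The remaining ingredients, namely the admissibility of the shrunk polytope under the Weyl law, the log-Lipschitz sum-to-integral comparison, and the bounded-ratio estimates on the central sub-box, are routine once this is in place.
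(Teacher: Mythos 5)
Your proposal is correct, and it reaches the conclusion by a genuinely different route than the paper. The paper fixes $\beta$ first: it bounds $\int q\,dt$ from below by $|I_M|$ times the pointwise bound of Lemma \ref{lemma:qtLB} (distances of the entries of \eqref{eq:alphabetaarray} to $\smax$ and $\smin$), and to make that profitable it invokes the decomposition of Theorem \ref{thm:Q=P}, restricting $\beta$ to the shrunken parallelohedron $\mathcal{Q}_{j_0}^*$ attached to the smallest gap $y_{j_0}$; the balance is then $|I_M|\asymp y_{j_0}+y_{k_0}$ times $\mathrm{vol}(\mathcal{Q}_{j_0}^*)$ against the diagonal factors, with the off-diagonal factors bounded below by $1/\mu(\beta)$ through row-and-column comparisons in the array. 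You instead treat $(\beta,t)$ jointly via $\lambda=\beta+t$, so that by Lemma \ref{lemma:rzero} and \eqref{eq:interlacing} the admissible region is the interlacing box with edge lengths $y_l\gg\lambda(F)^{\varepsilon}$, and you restrict to its central half, where the $2n$ diagonal factors contribute $\asymp\prod_l y_l^{-1}$ (cancelled by the volume) and where your midpoint identity does the rest: for $k+l\le n$ one has $\alpha_k+\lambda_l^0=(\lambda_l^0-\lambda_{n+1-k}^0)+\tfrac12(\alpha_k-\alpha_{k+1})$ with the correction dominated because $\alpha_{k+1}\ge\alpha_{n+1-l}\ge\tfrac12(\alpha_{n+1-l}+\alpha_{n+2-l})$, and mirror-symmetrically for $k+l\ge n+3$, so the $2\binom{n}{2}$ off-diagonal factors reconstitute $\mu(\lambda^0)^{2}$ and cancel the spectral weight; I checked this computation, the factor count, and the bounded-variation claims on the central sub-box, and they are right. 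The remaining steps you label routine (passing to a sparse set of lattice points so the Weyl-law balls from \eqref{eq:WLWLlower} are disjoint, shrinking by $O(K)$, and ensuring fibre lengths $\gg 1$ in the $t$-direction before comparing sum and integral) are indeed routine and are treated no more explicitly in the paper; they also leave room for the long $t$-intervals needed when Proposition \ref{prop:secondmomentLowerBound} is inserted to deduce Theorem \ref{thm:normLB}. One correction to your closing remark: your argument does not actually use the zonotope structure at all --- Theorem \ref{thm:Q=P} and Corollary \ref{coro:polytopevolume} never enter, only the box description from Lemma \ref{lemma:rzero} --- so your route buys a shorter proof of this proposition that bypasses Section \ref{section:polytope}, while the paper's route buys the explicit decomposition and volume of the whole admissible polytope, which the authors value independently and lean on again when relaxing the spacing condition in Section \ref{section:walls}.
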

The left hand side above is the unweighted version of \eqref{eq:normUBspectraldecomposition}, up to the factor $|A_F(1)|^2$.

\begin{mylemma}
\label{lemma:qtLB}
 Suppose that $\alpha = (\alpha_1, \dots, \alpha_{n+1}) \in \mr^{n+1}$, $\beta = (\beta_1, \dots, \beta_n) \in \mr^n$ and $\alpha_1 \geq \alpha_2 \geq \dots \geq \alpha_{n+1}$, $\beta_1 \geq \beta_2 \geq \dots \geq \beta_n$, and that \eqref{eq:alphajbetakcondition} holds.  Also suppose that $\sup S_- =: \smax $ and $\inf S_{+} =: \smin$.  
Then for $t \in I_M$, we have
\begin{equation}
\label{eq:qtLB}
 q(t,\alpha, \beta) \geq \prod_{s_+ \in S_+} (1+ |s_{+} - \smax |)^{-1/2} \prod_{s_- \in S_-+} (1+ |s_{-} - \smin |)^{-1/2}.
\end{equation}
Furthermore,
\begin{equation}
\label{eq:qtintegralLB}
 \intR q(t, \alpha, \beta) dt \gg |I_M| \prod_{s_+ \in S_+} (1+ |s_{+} - \smax |)^{-1/2} \prod_{s_- \in S_-+} (1+ |s_{-} - \smin |)^{-1/2}.
\end{equation}
\end{mylemma}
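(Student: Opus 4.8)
The plan is to derive both \eqref{eq:qtLB} and \eqref{eq:qtintegralLB} from the Stirling asymptotic \eqref{eq:qtalphabetaSTIRLING} together with Lemma \ref{lemma:rzero}. Since $t\in I_M$ and \eqref{eq:alphajbetakcondition} is assumed, Lemma \ref{lemma:rzero} gives $r(t,\alpha,\beta)=0$, so the exponential factor in \eqref{eq:qtalphabetaSTIRLING} is $\asymp 1$ and hence
\[
q(t,\alpha,\beta)\ \asymp\ \prod_{s\in S}\big(1+|t+s|\big)^{-1/2},
\]
where $S=\{\alpha_k+\beta_l:1\le k\le n+1,\ 1\le l\le n\}$ is the multiset displayed in the array \eqref{eq:alphabetaarray} and $S=S_+\cup S_-$ is the partition into the entries lying above, respectively below, the horizontal lines there (the implied constant depending only on $n$). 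So it suffices to bound this product from below by the $t$-independent right-hand side of \eqref{eq:qtLB}.

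To do that I would first record two consequences of $t\in I_M$. By \eqref{eq:IMdef}, and since $\smax=\sup S_-$ is attained in $S_-$ while $\smin=\inf S_+$ is attained in $S_+$, we have $t+\smax\le 0\le t+\smin$; and \eqref{eq:alphajbetakcondition} forces $\smin\ge\smax$, because every entry above a horizontal line in \eqref{eq:alphabetaarray} is $\ge$ every entry below it. Now for $s_+\in S_+$ one has $t+s_+\ge 0$, so $|t+s_+|=t+s_+\le s_+-\smax=|s_+-\smax|$; and for $s_-\in S_-$ one has $t+s_-\le 0$, so $|t+s_-|=-(t+s_-)\le\smin-s_-=|s_--\smin|$, where the nonnegativity of $s_+-\smax$ and of $\smin-s_-$ uses $s_+\ge\smin\ge\smax\ge s_-$. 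Multiplying these bounds over $s\in S=S_+\cup S_-$ gives \eqref{eq:qtLB}.

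Finally, \eqref{eq:qtintegralLB} will follow at once by integrating the pointwise bound: since $q\ge 0$ everywhere and the right-hand side of \eqref{eq:qtLB} is independent of $t$,
\[
\intR q(t,\alpha,\beta)\,dt\ \ge\ \int_{I_M}q(t,\alpha,\beta)\,dt\ \gg\ |I_M|\prod_{s_+\in S_+}\big(1+|s_+-\smax|\big)^{-1/2}\prod_{s_-\in S_-}\big(1+|s_--\smin|\big)^{-1/2}.
\]
If $S_+\cap S_-\neq\varnothing$ then $I_M$ reduces to a point and \eqref{eq:qtintegralLB} is vacuous, so nothing is lost there. I do not expect a genuine obstacle in this argument; the only place requiring care is the sign bookkeeping in the middle paragraph, which hinges entirely on the inequality $\smin\ge\smax$ supplied by \eqref{eq:alphajbetakcondition}.
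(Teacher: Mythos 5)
Your proposal is correct and follows essentially the same route as the paper: with $r(t,\alpha,\beta)=0$ on $I_M$, each factor is bounded using $t+\smax\le 0\le t+\smin$ (i.e. $1+|t+s_+|\le 1+(s_+-\smax)$ and $1+|t+s_-|\le 1+(\smin-s_-)$), and the integral bound follows by integrating the pointwise bound over $I_M$. The only cosmetic difference is that you make the Stirling-implied constant and the degenerate case $|I_M|=0$ explicit, which the paper leaves tacit.
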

\begin{proof}
 We note
\begin{equation}
 1 + |t+ s_+| = 1 + t + s_+ = 1 + t + \smax + (s_+ - \smax) \leq 1  + (s_+ - \smax).
\end{equation}
A similar bound holds for $s_-$, so \eqref{eq:qtLB} is shown; \eqref{eq:qtintegralLB} follows immediately. 
\end{proof}

\begin{proof}[Proof of Proposition \ref{prop:normLBwithoutLfunction}]
We need to show that
\begin{equation}
\label{eq:lowerestimatewithbetaj}
 \sumstar_{\beta_j \text{ admissible}} |B_j(1)|^2 |I_M| \prod_{s_+ \in S_+} (1+ |s_{+} - \smax |)^{-1/2} \prod_{s_- \in S_-+} (1+ |s_{-} - \smin |)^{-1/2} \gg 1,
\end{equation}
where $\beta_j$ admissible means that \eqref{eq:alphajbetakcondition} holds.

The work in Section \ref{section:polytope} is precisely what we need to obtain control over the various parameters above.  Recall that we used the notation $x_j = \beta_j - \beta_{j+1}$ and $y_j = \alpha_{n+1-j} - \alpha_{n+2-j}$.  With this notation, $\beta_j$ being admissible is equivalent to $x \in \mathcal{P}$.  Theorem \ref{thm:Q=P} provides a useful decomposition of $\mathcal{P}$ into parallelohedra; in particular, we shall restrict attention to $\mathcal{Q}_{j_0} \subset \mathcal{P}$ where $j_0 \in \{1, \dots, n \}$ is chosen to minimize $y_{j_0}$ (amongst all other choices of $y_j$).  We showed that $\mathcal{Q}_{j_0}$ is parameterized by \eqref{eq:xjtj0left} and \eqref{eq:xjtj0right}, where $0 \leq t_j \leq 1$ for all $j$.  Define $\mathcal{Q}_{j_0}^* \subset \mathcal{Q}_{j_0}$ to be given by \eqref{eq:xjtj0left} and \eqref{eq:xjtj0right} but further restricted by $\frac14 \leq t_j \leq \frac34$ for all $j$.  For $x \in \mathcal{Q}_{j_0}^*$ we can simplify \eqref{eq:lowerestimatewithbetaj}.

In general for $x \in \mathcal{P}$, we have
\begin{equation}
 |I_M| = \min\{ \alpha_{n+1-j} + \beta_j - \alpha_{n+2-k} - \beta_k : 1 \leq j, k \leq n \}.
\end{equation}
If $j \leq k$ then 
\begin{equation}
\alpha_{n+1-j} + \beta_j - \alpha_{n+2-k} - \beta_k = x_j + \dots + x_{k-1} - (y_{j+1} + \dots + y_{k-1}) = t_j y_j + (1-t_k)y_k,
\end{equation}
using \eqref{eq:xjtj}.
If $j \geq k$ then switching $j$ and $k$ gives 
\begin{equation}
\alpha_{n+1-k} + \beta_k - \alpha_{n+2-j} - \beta_j = 
y_j + \dots + y_k - (x_j + \dots + x_{k-1}) = (1-t_j) y_j + t_k y_k.
\end{equation}
Since $t_{j_0} = 0$ and $y_{j_0}$ is minimal, 
we conclude that for $x \in \mathcal{Q}_{j_0}^*$, $|I_M| \geq \frac14  y_{k_0}$, say where $k_0 \neq j_0$ is the second smallest of the $y_j$'s, after $y_{j_0}$.
Actually, in general, we have for $x \in \mathcal{Q}_{j_0}^*$ that
\begin{equation}
\frac18(y_j + y_k) \leq \alpha_{n+1-j} + \beta_j - \alpha_{n+2-k} - \beta_k \leq \frac{3}{4}(y_j + y_k).
\end{equation}

Next we estimate the terms in \eqref{eq:lowerestimatewithbetaj} with $s_+$ and $s_-$ not lying immediately above or below a horizontal line in \eqref{eq:alphabetaarray}.  We will show
\begin{equation}
\label{eq:alphakbetalawayfromdiagonal}
\prod_{l+m \leq n} (1 + |\alpha_l + \beta_m - \smax|)^{-1/2} \prod_{l+m \geq n+3} (1 + |\alpha_l + \beta_m - \smin|)^{-1/2} \gg \frac{1}{\mu(\beta)}.
\end{equation}
Suppose $l+m \leq n$.  Then we estimate the term with $s_+ = \alpha_l + \beta_m$ in \eqref{eq:lowerestimatewithbetaj} by comparing with $\alpha_l + \beta_{n+1-l}$ which is the entry in \eqref{eq:alphabetaarray} in the same row as $s_+$ and directly above one of the horizontal lines.  We note $\alpha_l + \beta_{n+1-l} - \smax \leq \alpha_l + \beta_{n+1-l} - (\alpha_{l+1} + \beta_{n+1-l})$, since $\alpha_l + \beta_{n+1-l}$ lies below the horizontal lines in \eqref{eq:alphabetaarray} while $\smax$ is the largest entry below the horizontal lines.  Taken together, we deduce
\begin{multline}
1 + \alpha_l + \beta_m - \smax = 1 + (\beta_m - \beta_{n+1-l}) + (\alpha_l + \beta_{n+1-l} - \smax).
\\
\leq 1 + (\beta_m - \beta_{n+1-l}) + (\alpha_l - \alpha_{l+1}).
\end{multline}
Now we note that by \eqref{eq:xjtj},
\begin{equation}
\label{eq:betambetan}
\beta_m - \beta_{n+1-l} = x_m + \dots + x_{n-l} \geq \frac14  y_{n+1-l} = \frac14 (\alpha_l - \alpha_{l+1}).  
\end{equation}
Hence, for $l + m \leq n$ we have
\begin{equation}
\label{eq:smaxupperbound}
1 + \alpha_l + \beta_m - \smax \leq 5 (1 + (\beta_m - \beta_{n+1-l})).
\end{equation}
Similarly, if $l +m \geq n+3$ we have
\begin{multline}
 1 + \smin - \alpha_l - \beta_m = 1 + (\beta_{n+2-l} - \beta_m) + (\smin - \alpha_l - \beta_{n+2-l}) 
\\
\leq 1 + (\beta_{n+2-l} - \beta_m) + (\alpha_{l-1} - \alpha_l).
\end{multline}
By \eqref{eq:xjtj}, we have
\begin{multline}
 \beta_{n+2-l} - \beta_m = t_{n+2-l} y_{n+2-l} + y_{n+3-l} + \dots + y_{m-1} + (1-t_m) y_m \geq t_{n+2-l} y_{n+2-l} + \frac14 y_m.
\end{multline}
This case is slightly different from \eqref{eq:betambetan} because if $n+2-l = j_0$ then $t_{n+2-l} = 0$.  However, if $n+2-l = j_0$ then $y_m \geq y_{j_0}$ by minimality, so we can always conclude $\beta_{n+2-l} - \beta_m \geq \frac14 (\alpha_{l-1} - \alpha_{l})$, so we have
\begin{equation}
\label{eq:sminupperbound}
1 + \smin - \alpha_l - \beta_m \leq 5 ( 1 + \beta_{n+2-l} - \beta_m).
\end{equation}
Then putting together \eqref{eq:smaxupperbound} and \eqref{eq:sminupperbound}, we deduce \eqref{eq:alphakbetalawayfromdiagonal} by noting that a term $|\beta_k - \beta_l |$ occurs exactly once in each of \eqref{eq:smaxupperbound} and \eqref{eq:sminupperbound}, for $k \neq l$.  

Now we turn to the terms with $k+l = n+1$ and $k+l = n+2$.  If $k \neq j_0, k_0$ then we use
\begin{equation}
1 + \alpha_{n+1-k} + \beta_{k} - \smax \leq 1 + \alpha_{n+1-k} + \beta_{k} - (\alpha_{n+2-k} + \beta_{k}) = 1 + y_k.
\end{equation}
For $k = j_0$ we use
\begin{equation}
1 + \alpha_{n+1-j_0} + \beta_{j_0} - \smax \leq 1 + \alpha_{n+1-j_0} + \beta_{j_0} - \alpha_{n+2-k_0} - \beta_{k_0} \asymp y_{j_0} + y_{k_0}.
\end{equation}
Similarly for $k = k_0$.  Then we obtain
\begin{multline}
\label{eq:alphakbetalalongdiagonal}
\prod_{k+l=n+1} (1 + |\alpha_k + \beta_l - \smax|)^{-1/2} \prod_{k+l = n+2} (1 + |\alpha_k + \beta_l - \smin|)^{-1/2} 
\\
\gg (y_1 \dots y_{j_0-1} (y_{j_0} + y_{k_0}) y_{j_0 + 1} \dots y_{k_0-1} (y_{j_0} + y_{k_0}) y_{k_0+1} \dots y_n)^{-1}.
\end{multline}

By combining \eqref{eq:alphakbetalawayfromdiagonal} with \eqref{eq:alphakbetalalongdiagonal} and our lower bound $|I_M| \gg y_{j_0} + y_{k_0}$, we have that the left hand side of \eqref{eq:lowerestimatewithbetaj} is
\begin{equation}
\label{eq:lowerestimatesimplified}
 \gg \frac{y_{j_0} + y_{k_0}}{y_1 \dots y_{j_0-1} y_{j_0+1} \dots y_{k_0-1} (y_{j_0} + y_{k_0})^2 y_{k_0+1} \dots y_n} \sum_{\beta_j : x \in \mathcal{Q}_{j_0}^*} |B_j(1)|^2 \frac{1}{\mu(\beta_j)}.
\end{equation}
The weighted local Weyl law \eqref{eq:WLWLlower} counts the number of $\beta_j$ in a small box, weighted by $|B_j(1)|^2$, so calculating the sum amounts to finding the number of integer vectors $x$ that lie in $\mathcal{Q}_{j_0}^*$ (which is essentially the volume of $\mathcal{Q}_{j_0}^*$ since it is a parallelohedron with sides longer than $\lambda(F)^{\varepsilon}$).  The volume of $\mathcal{Q}_{j_0}^*$ is
 $\gg y_1 \dots y_{j_0-1} y_{j_0+1} \dots y_{k_0-1} (y_{j_0} + y_{k_0}) y_{k_0+1} \dots y_n$, and hence \eqref{eq:lowerestimatesimplified} is
\begin{equation}
\gg (y_{j_0} + y_{k_0}) \frac{ y_1 \dots y_{j_0-1} y_{j_0+1} \dots y_{k_0-1} (y_{j_0} + y_{k_0}) y_{k_0+1} \dots y_n}{y_1 \dots y_{j_0-1} y_{j_0+1} \dots y_{k_0-1} (y_{j_0} + y_{k_0})^2 y_{k_0+1} \dots y_n}  \asymp 1. \qedhere
\end{equation}
\end{proof}

Finally, we need to deduce Theorem \ref{thm:normLB} from Proposition \ref{prop:normLBwithoutLfunction}.
It is not difficult to lower bound the second moment of $L$-functions as we codify with the following:
\begin{myprop}
\label{prop:secondmomentLowerBound}
Let $T_0 \in \mathbb{R}$, $T > 0$, let $L(f, s)$ be an $L$-function of degree $d$.  Suppose that for $s=1/2 + it$ with $T_0-T \leq t \leq T_0 + T$, the analytic conductor $q(f,s)$ of $L(f,s)$ satisfies $q(f,s) \leq Q$.  Then with $T \geq Q^{\varepsilon}$, we have
\begin{equation}
 \int_{T_0-T}^{T_0+T} |L(f, 1/2 + it)|^2 dt \geq T + O_{d,\varepsilon}(Q^{-99}).
\end{equation}
\end{myprop}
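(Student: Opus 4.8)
\emph{Strategy.} The plan is to derive the second-moment bound from an \emph{asymptotic evaluation of the first moment} $\int\phi(t)\,L(f,\tfrac12+it)\,dt$ against a suitable weight $\phi\ge 0$, and then apply Cauchy--Schwarz. Fix once and for all a smooth $\phi_0\ge 0$ supported in $[-1,1]$ with $0\le\phi_0\le 1$ and $\int_{-1}^{1}\phi_0(s)\,ds=1$, and set $\phi(t)=\phi_0\big(\tfrac{t-T_0}{T}\big)$, so that $0\le\phi\le\mathbf 1_{[T_0-T,\,T_0+T]}$ and $\int\phi(t)\,dt=T$. Since $|L|^2\ge 0$ and $\phi\le\mathbf 1_{[T_0-T,T_0+T]}$, Cauchy--Schwarz gives
\[
\int_{T_0-T}^{T_0+T}\!|L(f,\tfrac12+it)|^2\,dt\;\ge\;\int\phi(t)\,|L(f,\tfrac12+it)|^2\,dt\;\ge\;\frac{\big|\int\phi(t)\,L(f,\tfrac12+it)\,dt\big|^2}{\int\phi(t)\,dt},
\]
so the proposition follows once we show $\int\phi(t)\,L(f,\tfrac12+it)\,dt=T+O_{d,\varepsilon}(Q^{-99})$.

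\emph{Evaluating the first moment.} Write $L(f,s)=\sum_{n\ge1}a_f(n)n^{-s}$ (with $a_f(1)=1$ and $\sum_{n\le x}|a_f(n)|\ll x^{1+\varepsilon}$, which holds by temperedness, or already from the Rankin--Selberg bound), and approximate it by the smoothed Dirichlet series $D_X(s):=\sum_{n\ge1}a_f(n)n^{-s}e^{-n/X}$ with $X=Q^{C_0}$ for a large constant $C_0=C_0(d)$. By Mellin inversion, $D_X(s)=\frac{1}{2\pi i}\int_{(2)}L(f,s+w)\Gamma(w)X^{w}\,dw$; shifting the contour to $\mathrm{Re}(w)=-\tfrac12$ and collecting the residue at the simple pole $w=0$ of $\Gamma$ (which equals $L(f,s)$, using that $L(f,s)$ is entire in our applications) yields $L(f,\tfrac12+it)=D_X(\tfrac12+it)+O(X^{-1/2}Q^{O_d(1)})=D_X(\tfrac12+it)+O(Q^{-99})$ for $t\in[T_0-T,T_0+T]$, the error being bounded by the convexity bound for $L$ on $\mathrm{Re}(s)=0$ together with $q(f,\tfrac12+it)\le Q$ on the interval (which also forces $T\le Q$, so there is no hidden $T$-dependence). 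Integrating against $\phi$ and separating $n=1$ from $n\ge2$,
\[
\int\phi(t)\,L(f,\tfrac12+it)\,dt=e^{-1/X}\!\int\!\phi(t)\,dt\;+\;\sum_{n\ge 2}\frac{a_f(n)e^{-n/X}}{\sqrt n}\int\phi(t)\,n^{-it}\,dt\;+\;O(Q^{-99}).
\]
The first term is $\int\phi+O(Q^{-C_0})=T+O(Q^{-99})$. For $n\ge2$ we use $\int\phi(t)n^{-it}\,dt=T e^{-iT_0\log n}\,\widehat{\phi_0}\big(\tfrac{T\log n}{2\pi}\big)\ll_A T(T\log n)^{-A}$ for every $A$, valid because $\phi_0$ is smooth and compactly supported and $T\ge Q^{\varepsilon}\ge 1$; summing against $|a_f(n)|n^{-1/2}e^{-n/X}$ gives a total of $O_A\big(T^{1-A}X^{1/2+\varepsilon}\big)$, which, since $T\ge Q^{\varepsilon}$ and $X=Q^{C_0}$, is $O(Q^{-99})$ once $A$ is chosen large enough in terms of $C_0,d,\varepsilon$. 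This proves the required evaluation of the first moment (when $Q$ is bounded the statement is trivial after enlarging the implied constant, so we may assume $Q$ large).

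\emph{The main point.} The only delicate step is the bookkeeping in the last display: the smoothing error forces $X$ to be a large fixed power of $Q$, hence $\sum_n|a_f(n)|n^{-1/2}e^{-n/X}\asymp X^{1/2}$ is itself a large power of $Q$, and this is absorbed \emph{only} because integration by parts against $n^{-it}$ produces the gain $T^{1-A}$, which wins precisely by virtue of the hypothesis $T\ge Q^{\varepsilon}$ (on taking $A$ large). An essentially equivalent route replaces $D_X$ by the approximate functional equation for $L(f,\tfrac12+it)$: the main term again comes from the $n=1$ term of the principal sum, the $n\ge2$ terms of that sum are negligible by the same oscillation of $n^{-it}$, and the dual sum (of length $\ll\sqrt q<q$) is negligible because $\tfrac{d}{dt}\arg\epsilon(f,\tfrac12+it)=-\log q(f,\tfrac12+it)+O(1)$, so its terms oscillate with frequency $\asymp\log q$.
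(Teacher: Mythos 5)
Your proposal is correct and follows essentially the same route as the paper: reduce to a first moment by positivity and Cauchy--Schwarz against a nonnegative smooth weight, extract the main term $T$ from the $n=1$ coefficient, and kill the $n\geq 2$ terms via the rapid decay of $\widehat{\phi_0}\bigl(\tfrac{T\log n}{2\pi}\bigr)$, which wins precisely because $T \geq Q^{\varepsilon}$. The only (inessential) difference is that you represent $L(f,\tfrac12+it)$ by an exponentially smoothed Dirichlet series via Mellin inversion and a convexity bound, whereas the paper uses a one-piece approximate functional equation of length $Q^{1+\varepsilon}$ — a variant you yourself note is equivalent.
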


With Proposition \ref{prop:secondmomentLowerBound}, it only takes a very small modification to prove Theorem \ref{thm:normLB}.  The basic idea is that for $t \in I_M$, \eqref{eq:qtLB} provides a lower bound on $q(t,\alpha, \beta)$, while for $x \in \mathcal{Q}_{j_0}^*$ then as we showed in the proof of Proposition \ref{prop:normLBwithoutLfunction}, we have $|I_M| \gg (\alpha_{k_0} - \alpha_{k_0-1})$, which is $\gg \lambda(F)^{\varepsilon}$ by the spacing condition assumed in Theorem \ref{thm:normLB}.  Thus
\begin{equation}
 \int_{t \in I_M} q(t, \alpha, \beta) |L(1/2 + it, F \times u_j)|^2 dt \gg |I_M| \prod_{s_+ \in S_+} (1+ |s_{+} - \smax |)^{-1/2} \prod_{s_- \in S_-+} (1+ |s_{-} - \smin |)^{-1/2},
\end{equation}
and since in our proof of Proposition \ref{prop:normLBwithoutLfunction} we showed \eqref{eq:lowerestimatewithbetaj}, we finish the deduction.

\begin{proof}[Proof of Proposition \ref{prop:secondmomentLowerBound}]
 By a standard one-piece approximate functional equation, we have
\begin{equation}
 L(f, 1/2 + it) = \sum_{n \leq Q^{1+\varepsilon}} \frac{\lambda_f(n)}{n^{1/2 + it}} V(n) + O(Q^{-100}),
\end{equation}
where $V$ depends on the degree $d$ but not on $f$.  One can choose $V$ so that $V(1) = 1 + O(Q^{-100})$.  Suppose $w$ is a fixed smooth, nonnegative function such that $w(x) = 1$ for $|x| <  1/2$ and $w(x) = 0$ for $|x| > 1$.  Then by Cauchy's inequality and positivity,
\begin{equation}
 \int_{T_0-T}^{T_0 + T} |L(f, 1/2 + it)|^2 dt \geq \frac{\big|\intR L(f, 1/2 + it) w(\frac{t-T_0}{T}) dt \big|^2}{\intR w(\frac{t-T_0}{T}) dt}.
\end{equation}
The denominator above is $T \widehat{w}(0)$.  By the approximate functional equation, the numerator is
\begin{equation}
 \sum_{n \leq Q^{1+\varepsilon}} \frac{\lambda_f(n)}{n^{1/2}} V(n) \intR n^{-it} w(\frac{t-T_0}{T}) dt + O(Q^{-99}).
\end{equation}
The inner integral simplifies as $T n^{-iT_0} \widehat{w}(\frac{T \log{n}}{2\pi})$, which for $n > 1$ is $\ll T^{-A}$ for $A$ arbitrarily large.  Taking $A$ large compared to $\varepsilon$, we can trivially bound the terms $n > 1$ by $O(Q^{-100})$.  Thus
\begin{equation}
 \int_{T_0-T}^{T_0+T} |L(f, 1/2 + it)|^2 dt \geq T \widehat{w}(0) + O(Q^{-99}).
\end{equation}
Since $\widehat{w}(0) \geq \int_{-1/2}^{1/2}  dx = 1$, we complete the proof.
\end{proof}

\subsection{Approaching the walls}
\label{section:walls}
It seems interesting to understand the behavior of $N(F)$ as the spectral parameters of $F$ may become close; that is, dropping the assumption that $|\alpha_k - \alpha_l| \geq \lambda(F)^{\varepsilon}$ for all $k \neq l$.  The upper bound did not need this assumption so we only consider the lower bound.  Recall \eqref{eq:qtalphabetaSTIRLING}.  Instead of restricting attention to the set where $r(t,\alpha,\beta) = 0$ (which may have very small measure in this degenerate situation), we consider the set where $r(t,\alpha, \beta) \geq - C'$ where $C' > 1$ may depend on $n$ only.  If $I_M = [a,b]$ then define $I_M^C = [a-C, b+C]$; then for $t \in I_M^C$, we have $r(t,\alpha,\beta) \geq -n(n+1)C$.  We also relax the condition \eqref{eq:alphajbetakcondition} to (say) 
\begin{equation}
 \label{eq:extendedalphajbetakcondition}
\alpha_{n+1-k} - \alpha_{n+2-j} + C \geq \beta_j - \beta_k \geq \alpha_{n+2-k} - \alpha_{n+1-j} - C.
\end{equation}
If $t \in I_M^C$ and \eqref{eq:extendedalphajbetakcondition} holds, then $r(t,\alpha,\beta) \geq -C'$ for some $C' > 1$ depending only on $n$ and the choice of $C > 0$.  If we define $y_j = \max(\alpha_{n+1-j} - \alpha_{n+2-j}, C'')$ for some $C'' > 1$ then \eqref{eq:system} implies that \eqref{eq:extendedalphajbetakcondition} holds (for some fixed $C > 0$).  At this point the work in Section \ref{subsection:lowerbound} carries through almost unchanged.

\section{Asymptotics}
\label{section:asymptotic}
In this section we let $c$ denote a positive constant that may change line to line (to avoid excessive re-labelling).  We have
\begin{equation}
 N(F) \sim c |A_F(1)|^2 \sum_{j} \intR |L(1/2 + it, F \times \overline{u_j})|^2 q(t, \alpha, \beta_j) dt + \dots
\end{equation}
with the dots indicating $N_{\text{max}}(F) + N_{\text{min}}(F)$.  Recall that the $L$-function has Dirichlet series given by \eqref{eq:RankinSelbergCuspidal}, and $q$ is defined by \eqref{eq:qtalphabetaDEF}.  In fact, $q$ contains the appropriate gamma factors for the completed $L$-function.

We wish to apply the moment conjectures of \cite{CFKRS} to find the asymptotic of $N(F)$.  For this, we shift slightly and consider
\begin{multline}
 \sumstar_j \intR q(t-iz,\alpha, \beta_j) \sum_{m_1, m_1' \geq 1} \dots \sum_{m_{n}, m_n' \geq 1}  
\\
\frac{\lambda_F(m_1, \dots, m_n) \overline{\lambda_F}(m_1', \dots, m_n') \lambda_j(m_2, \dots, m_n) \overline{\lambda_j}(m_2', \dots, m_n')}{\prod_{k=1}^{n} m_k ^{(n+1-k)(\half + it+ z)} (m_k')^{(n+1-k)(\half -it+ z)}}
+ \dots,
\end{multline}
the dots indicating an identical term under $z \rightarrow -z$.  Together, the $j$-sum and the $t$-integral should pick out the diagonal terms with $m_k = m_k'$ for all $1 \leq k \leq n$.  Thus we should have
\begin{equation}
 N(F) \sim c |A_F(1)|^2 \sum_{m_1, \dots, m_n} \frac{|\lambda_F(m_1, \dots, m_n)|^2}{m_1^{n(1+2z)} m_2^{(n-1)(1+2z)} \dots m_n^{1+2z}} \sumstar_j \intR q(t-iz, \alpha, \beta_j) dt + \dots.
\end{equation}
Note that this is
\begin{equation}
 N(F) \sim c |A_F(1)|^2 L(1 + 2z, F \times \overline{F}) \sumstar_j \intR q(t-iz,\alpha,\beta_j) dt + \dots.
\end{equation}
Note for $z$ small that
\begin{equation}
 \frac{q(t-iz, \alpha, \beta_j)}{q(t, \alpha, \beta_j)} = 1 -iz \frac{q'}{q}(t, \alpha, \beta_j) + O(z^2),
\end{equation}
and
\begin{equation}
 \frac{q'}{q}(t,\alpha,\beta_j) = \sum_{l=1}^{n} \sum_{k=1}^{n+1} \log |\frac{1/2 + it+ i\alpha_k + i\beta_{l}}{2} |^2 + O(1).
\end{equation}
Our work in Sections \ref{section:Archimedean} and \ref{section:polytope} describes the effective region of integration appearing in \eqref{eq:NFasymptotic}.
In particular, with some work, one can derive that $\frac{q'}{q}(t, \alpha, \beta_j) = \log \lambda(F) + O(1)$ inside the region of interest; the reason for this is that when $t \in I_M$ and $\beta$ lies in the polytope $\mathcal{P}$, the largest element $t + \alpha_1 + \beta_1$ is $\geq \alpha_1 - \alpha_n$. Typically, $\alpha_n \leq 0$ in which case this term is $\geq \alpha_1$.  If not, then we look at the smallest term
$|t+ \alpha_{n+1} + \beta_n| \geq \alpha_2 - \alpha_{n+1}$ which is $ - \alpha_{n+1}$ assuming $\alpha_2 \geq 0$.  Thus
$\frac{q'}{q} \geq \log \alpha_1^2 + \log \alpha_{n+1}^2 +  O(1) = \log \lambda(F) + O(1)$.  On the other hand, the upper bound $\frac{q'}{q} \leq \log \lambda(F) + O(1)$ is relatively trivial. 
Write $L(1 + 2z, F \times \overline{F}) = \frac{r}{2z} (1 + r'z + O(z^2))$.  
By Theorem 5.17 of \cite{IK} (conditional on GRH and Ramanujan), we have
 $r' \ll \log \log \lambda(F)$.  Since $q'/q \sim \log \lambda(F)$, we then derive the following asymptotic after letting $z \rightarrow 0$:
\begin{equation}
\label{eq:NFasymptotic}
 N(F) \sim c |A_F(1)|^2 r \sumstar_j \intR q'(t, \alpha, \beta_j) dt \sim c \sum_j \intR q'(t, \alpha, \beta_j) dt, 
\end{equation}
in view of Proposition \ref{prop:RankinSelbergL2formula}.   Thus,
\begin{equation}
 N(F) \sim c \log \lambda(F) \sumstar_j \intR q(t, \alpha, \beta_j) dt.
\end{equation}

Now it should be the case that $\sumstar_j q(t, \alpha, \beta_j) \sim c \int q(t, \alpha, \beta) \mu(\beta) d\beta$, where $\mu(\beta)$ is the spectral measure.
So we need to calculate the following integral
\begin{equation}
 \intR \int \mu(\beta) \frac{\prod_{l=1}^{n} \prod_{k=1}^{n+1} |\Gamma(\frac{1/2 + it+ i\alpha_k + i\beta_{l}}{2}) |^2}{\Big(\prod_{1 \leq k < l \le n+1} |\Gamma(\frac{1+ i\alpha_k - i\alpha_l}{2}) |^2\Big) \Big(\prod_{1 \leq k < l \le n} | \Gamma(\frac{1+ i\beta_{k} - i\beta_{l}}{2})|^2 \Big)} d\beta dt.
\end{equation}
Recall that $\beta_1 + \dots + \beta_n = 0$ and the $\beta$-integral is over this hyperplane.  Changing variables $\beta_k \rightarrow \beta_k - t$ for all $k$ reduces to calculating
\begin{equation}
 \int_{\mathbb{R}^{n}} \mu(\beta) \frac{\prod_{l=1}^{n} \prod_{k=1}^{n+1} |\Gamma(\frac{1/2 + i\alpha_k + i\beta_{l}}{2}) |^2}{\Big(\prod_{1 \leq k < l \le n+1} |\Gamma(\frac{1+ i\alpha_k - i\alpha_l}{2}) |^2\Big) \Big(\prod_{1 \leq k < l \le n} | \Gamma(\frac{1+ i\beta_{k} - i\beta_{l}}{2})|^2 \Big)} d\beta_1 \dots d \beta_n.
\end{equation}
The spectral measure is given by
\begin{equation}
 \mu(\beta) = c \prod_{1 \leq k < l \leq n} \frac{| \Gamma(\frac{1+ i\beta_{k} - i\beta_{l}}{2})|^2}{| \Gamma(\frac{i\beta_{k} - i\beta_{l}}{2})|^2},
\end{equation}
so that we need to calculate
\begin{equation}
\label{eq:Calpha}
c(\alpha) \int_{\mathbb{R}^{n}} \frac{\prod_{l=1}^{n} \prod_{k=1}^{n+1} |\Gamma(\frac{1/2 + i\alpha_k + i\beta_{l}}{2}) |^2}{ \prod_{1 \leq k < l \leq n} | \Gamma(\frac{i\beta_{k} - i\beta_{l}}{2})|^2 } d\beta_1 \dots d \beta_n, \quad c(\alpha) = \prod_{1 \leq k < l \le n+1} |\Gamma(\frac{1+ i\alpha_k - i\alpha_l}{2}) |^{-2}.
\end{equation}

We continue as follows.  By Stirling, the integral is
\begin{equation}
\label{eq:approximateintegralafterStirling}
\approx c \int_R \prod_{1 \leq k < l \leq n} (1+|\beta_k - \beta_l|) \prod_{l=1}^{n} \prod_{k=1}^{n+1} (1+|\alpha_k + \beta_l|)^{-1/2} d \beta,
\end{equation}
where the region of integration $R$ is defined by $\beta_j + \alpha_{n+1-j} \geq 0$ and $\beta_j + \alpha_{n+2-j} \leq 0$ for all $j$ (this is equivalent to the definition of $I_M$ after changing variables).  The restriction to $R$ should not alter the final answer very much because of the exponential decay of the gamma factors outside of $R$.
As noted with \eqref{eq:interlacing}, the region defined by $R$ is equivalent to
\begin{equation}
 -\alpha_{n+1} \geq \beta_1 \geq -\alpha_n \geq \beta_2 \geq \dots \geq -\alpha_{2} \geq \beta_n \geq -\alpha_1,
\end{equation}
which defines a box in $\mathbb{R}^n$.
By Lemma \ref{lemma:qtalphabetaUB}, \eqref{eq:approximateintegralafterStirling} is
\begin{multline}
 \ll \int_R \prod_{k=1}^n (1 + |\beta_k + \alpha_{n+1-k}|)^{-1/2} (1 + |\beta_k + \alpha_{n+2-k}|)^{-1/2} d \beta
\\
= \prod_{k=1}^n \int_{-\alpha_{n+1-k}}^{\alpha_{n+2-k}} (1 + |\beta_k + \alpha_{n+1-k}|)^{-1/2} (1 + |\beta_k + \alpha_{n+2-k}|)^{-1/2} d\beta_k.
\end{multline}
By \eqref{eq:hahb}, we then have that \eqref{eq:approximateintegralafterStirling} is $\ll 1$.
Thus we are led to the conjecture $N(F) \ll \log \lambda(F)$.  The lower bound of the same order of magnitude is contained in Section \ref{section:lowerbound}. 
One may observe that the above upper bound is a somewhat conceptually simpler version of the arguments given to prove Lemma \ref{lemma:qUB}.


\begin{thebibliography}{99}
\bibitem[B]{Blomer} V. Blomer, {\it Applications of the Kuznetsov formula on $GL(3)$}, preprint (2012) {\tt http://arXiv:1205.1781}.
\bibitem[BGT]{BGT} N. Burq, P. G\'{e}rard, N. Tzvetkov, {\it Restrictions of the Laplace-Beltrami eigenfunctions to submanifolds}, Duke Math. J. 138 (3) (2007) 445--486.
\bibitem[CP]{CP} J. Cogdell and I. Piatetski-Shapiro, {\it Remarks on Rankin-Selberg convolutions.}  Contributions to automorphic forms, geometry, and number theory,  255--278, Johns Hopkins Univ. Press, Baltimore, MD, 2004.
\bibitem[CFKRS]{CFKRS}  J.B. Conrey, D. Farmer, J. Keating, M. Rubinstein, and N. Snaith, {\it Integral moments of $L$-functions},  Proc. London Math. Soc. (3)  91  (2005),  no. 1, 33--104. 
\bibitem[CS]{ConwaySloane} J. H. Conway and N.J.A. Sloane, {\it Sphere Packings, Lattices, and Groups} Third edition. With additional contributions by E. Bannai, R. E. Borcherds, J. Leech, S. P. Norton, A. M. Odlyzko, R. A. Parker, L. Queen and B. B. Venkov. Grundlehren der Mathematischen Wissenschaften [Fundamental Principles of Mathematical Sciences], 290. Springer-Verlag, New York, 1999.
\bibitem[GRS]{GRS} A. Ghosh, A. Reznikov, and P. Sarnak, {\it Nodal domains of Maass forms I } preprint (2012) {\tt http://arxiv.org/abs/1207.6625}
\bibitem[Go]{Goldfeld} D. Goldfeld, {\it Automorphic forms and $L$-functions for the group ${\rm GL}(n,\mathbb{R})$.} With an appendix by Kevin A. Broughan. Cambridge Studies in Advanced Mathematics, 99. Cambridge University Press, Cambridge, 2006.
\bibitem[GW]{GoodmanWallach} R. Goodman and N. Wallach, {\it Symmetry, Representations, and Invariants} Symmetry, representations, and invariants. Graduate Texts in Mathematics, 255. Springer, Dordrecht, 2009.
\bibitem[GR]{GR} Gradshteyn, I. S.; Ryzhik, I. M. {\it Table of Integrals, Series, and Products}. Translated from the Russian. Sixth edition. Translation edited and with a preface by Alan Jeffrey and Daniel Zwillinger. Academic Press, Inc., San Diego, CA, 2000.
\bibitem[HL]{HL} J. Hoffstein and P. Lockhart, {\it Coefficients of Maass forms and the Siegel zero.} With an appendix by Dorian Goldfeld, Hoffstein and Daniel Lieman. Ann. of Math. (2) 140 (1994), no. 1, 161--181. 
\bibitem[Iw2]{IwaniecSpectral} H. Iwaniec, {\it Spectral methods of automorphic forms.} Second edition. Graduate Studies in Mathematics, 53. American Mathematical Society, Providence, RI; Revista Matemática Iberoamericana, Madrid, 2002. 
\bibitem[IK]{IK} H. Iwaniec and E. Kowalski, {\it Analytic Number Theory}. American Mathematical Society Colloquium Publications, 53.  American Mathematical Society, Providence, RI, 2004.
\bibitem[L]{Langlands} R. Langlands, {\it Problems in the theory of automorphic forms.} Lectures in modern analysis and applications, III, pp. 18–61. Lecture Notes in Math., Vol. 170, Springer, Berlin, 1970.
\bibitem[LM]{LapidMuller} E. Lapid and W. M\"{u}ller, {\it Spectral asymptotics for arithmetic quotients of {${\rm
              SL}(n,\mathbb{R})/{\rm SO}(n)$}}. Duke Math. J. 149 (2009), no. 1, 117--155. 
\bibitem[Li]{XiannanLi} Xiannan Li, {\it Upper bounds on L-functions at the edge of the critical strip.} Int. Math. Res. Not. IMRN 2010, no. 4, 727--755.
\bibitem[LY]{LiYoung} Xiaoqing Li and M. Young, {\it The $L^2$ restriction norm of a $GL_3$ Maass form}. Compos. Math., 148 , pp 675-717.
\bibitem[Ma]{Marshall} S. Marshall, {\it $L^p$ bounds for higher rank eigenfunctions and asymptotics of spherical functions}.  Preprint, 2011. {\tt  http://arxiv.org/abs/1106.0534}
\bibitem[Mi]{Milicevic} D. Mili\'{c}evi\'{c}, {\it Large values of eigenfunctions on arithmetic hyperbolic surfaces},  Duke Math. J. 155 (2010), no. 2, 365--401.
\bibitem[MW]{MW} C. M{\oe}glin and J.-L. Waldspurger, {\it Spectral decomposition and Eisenstein series.} Cambridge Tracts in Mathematics, 113. Cambridge University Press, Cambridge, 1995. 
\bibitem[Sa1]{SarnakMorowetz} P. Sarnak, {\it Letter to Morawetz}, 2004 {\tt http://www.math.princeton.edu/sarnak/}
\bibitem[Sa2]{SarnakReznikov} P. Sarnak, {\it Letter to Reznikov}, 2008 {\tt http://www.math.princeton.edu/sarnak/}
\bibitem[St1]{StadeAJM} E. Stade, {\it Mellin transforms of {${\rm GL}(n,\mathbb{R})$} {W}hittaker functions.} Amer. J. Math. 123 (2001), no. 1, 121--161.
\bibitem[St2]{StadeIsrael} E. Stade, {\it Archimedean L-factors on ${\rm GL}(n)\times {\rm GL}(n)$ and generalized Barnes integrals.}  Israel J. Math. 127 (2002), 201--219.
\bibitem[T]{Templier} N. Templier, {\it Large values of modular forms}, 
Preprint (2012) {\tt http://http://arxiv.org/abs/1207.6134}
\bibitem[W]{Waldspurger} J.-L. Waldspurger, {\it Sur les coefficients de Fourier des formes modulaires de poids demi-entier.} J. Math. Pures Appl. (9) 60 (1981), no. 4, 375--484. 
\bibitem[Ze]{Zelobenko} D. P. {\v{Z}}elobenko, {\it Compact Lie Groups and Their Representations}, Izdat. ``Nauka,'' Moscow, 1970 (Russian).  English translation: Translations of Mathematical Monographs, Vol. 40. American Mathematical Society, Providence, R.I., 1973.
\bibitem[Zh]{Zhang} S. W. Zhang, {\em Gross--Zagier formula for $GL_2$\/}. Asian J. Math. \textbf{5} (2001), 183--290.
\bibitem[Zi]{Ziegler} G. Ziegler, {\it Lectures on Polytopes}. Graduate Texts in Mathematics, 152. Springer-Verlag, New York, 1995.
\end{thebibliography}
\end{document}